\DeclareSymbolFont{SY}{U}{psy}{m}{n}
\DeclareMathSymbol{\emptyset}{\mathord}{SY}{'306}
\theoremstyle{plain}
\newtheorem{thm}{Theorem}[section]
\newtheorem{cor}[thm]{Corollary}
\newtheorem{lem}[thm]{Lemma}
\newtheorem{prop}[thm]{Proposition}
\newtheorem{defn}[thm]{Definition}
\theoremstyle{definition}
\newtheorem{rem}[thm]{Remark}
\newtheorem{ex}[thm]{Example}
\numberwithin{equation}{section}
\def\beq{\begin{eqnarray}}
\def\eeq{\end{eqnarray}}
\def\beqa{\begin{eqnarray*}}
\def\eeqa{\end{eqnarray*}}
\begin{document}
\title[On the similarity of operators restricted to an invariant subspace]{On the similarity of operators restricted to an invariant subspace}

\author{Kui Ji, Shanshan Ji$^{*}$, Dinesh Kumar Keshari and Jing Xu}
\curraddr[K. Ji, S. Ji and J. Xu ]{Department of Mathematics, Hebei Normal University,
Shijiazhuang, Hebei 050016, China}

\email[K. Ji]{jikui@hebtu.edu.cn, jikuikui@163.com}
\email[S. Ji]{jishanshan15@outlook.com}
\email[D.K. Keshari]{dinesh@niser.ac.in}
\email[J. Xu]{xujingmath@outlook.com}
\thanks{The first author was supported by National Natural Science Foundation of China (Grant No. 11831006 and 11922108).}

\subjclass[2000]{Primary 47C15, 47B37; Secondary 47B48, 47L40}

\keywords{Cowen-Douglas operators, Similarity, M-contraction, Reducibility}

\begin{abstract}
Let $M_{z}$ be the multiplication operator on the Bergman space and $M_{I}$ denote the restriction of $M_{z}$ to an invariant subspace $I$. A question raised by K. Zhu in \cite{ZKH2} is that when are two restriction operators $M_{I}$ and $M_{J}$ are similar? In this note, we give some sufficient conditions of this problem in a general case.
\end{abstract}

\maketitle

\section{Introduction}

Let $\mathbb{D}$ be the open unit disk in the complex plane $\mathbb{C}$,
$\mathcal{L}(\mathcal{H})$ be the set of all bounded linear operators on the infinite dimensional complex separable Hilbert space $\mathcal{H}$.
One of the basic problems of operator theory is how to judge the similarity of two bounded linear operators. Let $T$ and $S$
 belong to $\mathcal{L}(\mathcal{H})$ and $\mathcal{L}(\mathcal{K})$, respectively. We say $T$ is similar to $S$ (denoted by $T\sim_{s}S$), if there
exists an invertible bounded linear operator $X: \mathcal{H}\rightarrow \mathcal{K}$, such that $XT=SX$.
We say $T$ is unitarily equivalent to $S$ (denoted by $T\sim_{u}S$), if $X$ is a unitary operator.
Compared with the unitary equivalence of operators, the similarity of operators is more difficult.
In general, one can only study more specific subclasses.

The Beurling theorem, due to A. Beurling, stated that invariant subspaces of multiplicative operator on Hardy space can be characterized by inner functions (cf.\cite{BA}).
According to Beurling's theorem,
P. Bourdon proved in \cite{Bourdon} that the multiplication operator on Hardy space is unitarily equivalent to its restrictions on each invariant subspace, but there is no such conclusion in Bergman space.
Hardy space and Bergman space are essentially different.
Let $M_{I}$ denote the restriction of the multiplication operator $M_{z}$ to an invariant subspace $I$. For Bergman space, in \cite{RS}, S. Richter gave the following result for the general situation:
\begin{thm}\cite{RS}
Let $M_{z}$ be the multiplication operator on Bergman space. Then $M_{I}$ and $M_{J}$ are unitarily equivalent if and only if $I=J$.
\end{thm}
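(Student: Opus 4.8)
The plan is to prove the nontrivial (forward) direction, since $I=J$ trivially gives $M_I=M_J$. Suppose $U\colon I\to J$ is a unitary with $UM_I=M_JU$. Taking adjoints gives $M_I^{*}U^{*}=U^{*}M_J^{*}$, which rearranges to $UM_I^{*}=M_J^{*}U$; hence $U$ carries each eigenvector of $M_I^{*}$ to an eigenvector of $M_J^{*}$ at the same eigenvalue. Writing $K_I(\cdot,w)$ for the reproducing kernel of $I$ at $w\in\mathbb{D}$, the identity $(zf)(w)=wf(w)$ shows $M_I^{*}K_I(\cdot,w)=\bar wK_I(\cdot,w)$, and likewise for $J$, so the kernel functions are the distinguished eigenvectors at $\bar w$. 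Unitary equivalence forces the Fredholm indices of $M_I-w$ and $M_J-w$ to agree, hence $\dim\ker(M_I^{*}-\bar w)=\dim\ker(M_J^{*}-\bar w)$ for all $w$.

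I would first settle the main case in which $I$ and $J$ have index one, i.e. $\dim(I\ominus zI)=1$, equivalently $\ker(M_I^{*}-\bar w)=\mathbb{C}\,K_I(\cdot,w)$. Then $UK_I(\cdot,w)=c(w)K_J(\cdot,w)$ for some scalar $c(w)$, and the reproducing property yields, for every $f\in I$,
\[
\overline{c(w)}\,(Uf)(w)=\langle Uf,\,c(w)K_J(\cdot,w)\rangle=\langle Uf,\,UK_I(\cdot,w)\rangle=\langle f,K_I(\cdot,w)\rangle=f(w).
\]
Thus $U$ is the multiplication operator $Uf=\psi f$ with $\psi(w)=1/\overline{c(w)}$; choosing any $f_0\in I\setminus\{0\}$ exhibits $\psi=Uf_0/f_0$ as a meromorphic function on $\mathbb{D}$, holomorphic off the (discrete) common zero set of $I$.

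Next I would exploit that $U$ is isometric: $\int_{\mathbb{D}}|\psi|^{2}f\bar g\,dA=\int_{\mathbb{D}}f\bar g\,dA$ for all $f,g\in I$. Inserting $f=z^{n}f_0$ and $g=z^{m}f_0$ (both in $I$ by invariance) gives
\[
\int_{\mathbb{D}}(|\psi|^{2}-1)\,z^{n}\bar z^{m}\,|f_0|^{2}\,dA=0,\qquad n,m\ge 0.
\]
Since finite linear combinations of the monomials $z^{n}\bar z^{m}$ are uniformly dense in $C(\overline{\mathbb{D}})$ by Stone--Weierstrass, the finite measure $(|\psi|^{2}-1)|f_0|^{2}\,dA$ vanishes, so $|\psi|=1$ almost everywhere on $\mathbb{D}$. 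As $\psi$ is holomorphic and hence continuous off a discrete set, $|\psi|\equiv1$ there, and a holomorphic function of constant modulus on a connected open set is a unimodular constant $\alpha$. Therefore $Uf=\alpha f$, and surjectivity of $U$ gives $J=U(I)=\alpha I=I$.

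The step $|\psi|=1$ a.e. on $\mathbb{D}\Rightarrow\psi$ constant is exactly where the Bergman space departs from the Hardy space: the area measure lives on the \emph{interior} of the disk, so unimodularity forces rigidity, whereas the analogous Hardy-space condition constrains $\psi$ only on the boundary circle and is met by every inner function — precisely why $M_z$ there can be unitarily equivalent to its own restriction. The main obstacle I anticipate is the general (higher-index) case, where $\ker(M_I^{*}-\bar w)$ has dimension greater than one, $K_I(\cdot,w)$ is no longer a cyclic eigenvector, and $U$ need not be a single multiplication operator. I would address this through the associated Hermitian holomorphic vector bundles of the Cowen--Douglas operators $M_I^{*},M_J^{*}$ together with the wandering-subspace decomposition $I=\bigoplus_{k\ge0}z^{k}(I\ominus zI)$: the intertwiner is a module map determined by its action on the wandering subspace, and the same area-measure density argument, now applied to a matrix-valued multiplier, should again force it to be a constant unitary, yielding $I=J$.
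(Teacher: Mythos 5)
First, note that the paper does not prove this statement at all; it is quoted from Richter \cite{RS} as background, so your proposal can only be measured against the theorem itself. Your index-one argument is correct and is essentially Richter's: the intertwining unitary acts as multiplication by a function $\psi$ holomorphic off a discrete set, the Stone--Weierstrass/Riesz step forces $|\psi|=1$ a.e.\ with respect to area measure, and a holomorphic function of constant modulus on a connected open set is constant, whence $J=\alpha I=I$. (One small patch: you should work on $\mathbb{D}$ minus the union of the common zero sets of both $I$ and $J$, since $K_J(\cdot,w)$ could vanish at a point where $K_I(\cdot,w)$ does not; this costs nothing, as both sets are discrete.)

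The genuine gap is the higher-index case, which is part of the theorem -- invariant subspaces of the Bergman space of every index $1\le n\le\infty$ exist -- and your sketch for it rests on a false ingredient: the orthogonal decomposition $I=\bigoplus_{k\ge 0}z^{k}(I\ominus zI)$ holds in the Hardy space, where $M_I$ is an isometry, but fails in the Bergman space; the correct statement (Aleman--Richter--Sundberg) is only that $I$ is \emph{generated} by $I\ominus zI$, a much later and harder result, and even granting it the intertwiner need not be a matrix-valued multiplier in any usable sense. Fortunately no bundle machinery is needed: your own Stone--Weierstrass step settles the general case if applied to each $f\in I$ separately. From $U(z^{n}f)=z^{n}Uf$ and unitarity one gets $\int_{\mathbb{D}}z^{n}\bar z^{m}\bigl(|Uf|^{2}-|f|^{2}\bigr)\,dA=0$ for all $n,m\ge 0$, hence $|Uf|=|f|$ a.e.; thus $Uf/f$ is holomorphic off the discrete zero set of $f$ with modulus one a.e., hence a unimodular constant $\alpha_{f}$, so $Uf=\alpha_{f}f$. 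For linearly independent $f,g$, comparing $U(f+g)=\alpha_{f+g}(f+g)$ with $\alpha_{f}f+\alpha_{g}g$ forces $\alpha_{f}=\alpha_{g}$, so $U=\alpha\,\mathrm{id}_{I}$ and $J=UI=I$ with no restriction on the index. I recommend replacing the kernel-eigenvector reduction by this argument, which subsumes the index-one case as well.
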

This theorem tells us that the purpose of studying the properties of restriction operator $M_{I}$ is to better understand the lattice of invariant subspace of multiplication operator on Bergman space.
Let $Lat(T)$ be the lattice of invariant subspace of operator $T$.
In 1997, K. Zhu proposed the following two open problems about the restricted operators $M_{I}$ and $M_{J}$ in \cite{ZKH2}.

\textbf{Problem: }
\begin{enumerate}
  \item[(1)] When are two restriction operators $M_{I}$ and $M_{J}$ similar?
  \item[(2)] When are two restriction operators $M_{I}$ and $M_{J}$ quasi-similar?
\end{enumerate}
Based on these two questions, the next three theorems give some partial answers. For quasi-similar case, H. Hedenmalm, B. Korenblum and K. Zhu proved the following result.
\begin{thm}\cite{HKZ}
Let $M_{z}$ be the multiplication operator on Bergman space. If $I\in Lat(M_{z})$, then $M_{I}$ and $M_{z}$ are quasi-similar if and only if $I$ is generated by a
bounded analytic function.
\end{thm}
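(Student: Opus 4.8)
The plan is to prove the two implications separately, leaning on one structural fact about the Bergman space $\mathcal{H}=L^2_a(\mathbb{D})$: its multiplier algebra is $H^\infty$, i.e. an analytic function $\phi$ induces a bounded operator $M_\phi$ on $\mathcal{H}$ if and only if $\phi\in H^\infty$. I would establish necessity first, since it is the clean direction and in fact uses only one of the two intertwiners; the work of sufficiency is then concentrated in producing a single additional operator.

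For necessity, suppose $M_I$ and $M_z$ are quasi-similar and pick a quasi-affinity $X\colon\mathcal{H}\to I$ (bounded, injective, dense range) with $XM_z=M_IX$. I would set $\phi:=X1\in I$ and use the intertwining together with $M_I=M_z|_I$ to compute $Xz^n=XM_z^n1=M_I^n\phi=z^n\phi$, so that $X$ coincides with $M_\phi$ on polynomials. Density of the polynomials, boundedness of $X$, and continuity of point evaluations on $\mathcal{H}$ then upgrade this to $X=M_\phi$ on all of $\mathcal{H}$. The multiplier fact forces $\phi\in H^\infty$, and dense range gives $I=\overline{X\mathcal{H}}=\overline{\phi\,\mathbb{C}[z]}=[\phi]$. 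Thus $I$ is generated by the bounded analytic function $\phi$.

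For sufficiency, assume $I=[\phi]$ with $\phi\in H^\infty$. One quasi-affinity is free: $X:=M_\phi\colon\mathcal{H}\to I$ is bounded (as $\phi\in H^\infty$), injective (as $\phi\not\equiv 0$), has dense range (it contains $\phi\,\mathbb{C}[z]$, whose closure is $[\phi]=I$), and satisfies $XM_z=M_IX$. The remaining task is a quasi-affinity $Y\colon I\to\mathcal{H}$ with $YM_I=M_zY$. I would start from the densely defined division operator $Y_0\colon\phi p\mapsto p$ on $\phi\,\mathbb{C}[z]\subseteq I$; it intertwines on the nose, since $Y_0(z\cdot\phi p)=zp=M_zY_0(\phi p)$, it is injective, and its image is all of $\mathbb{C}[z]$, hence dense.

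The hard part will be to show that $Y_0$ is bounded, so that it extends to the required $Y=\overline{Y_0}$ on $I$; equivalently, one must prove an estimate $\|p\|\le C\,\|\phi p\|$ controlling the norm distortion of $M_\phi$ along the generator. I expect this to be essentially the entire difficulty of the theorem, and the point at which the hypothesis that $\phi$ is a bounded analytic generator (not merely an element of $\mathcal{H}$) is indispensable—presumably through the finer theory of Bergman invariant subspaces, such as the extremal/contractive-divisor representation of $[\phi]$ and the boundedness of the associated division map. Granting that estimate, $Y$ inherits injectivity and dense range from $Y_0$, and together with $X$ it delivers the quasi-similarity of $M_I$ and $M_z$. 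I would therefore devote nearly all of the effort to the boundedness of the backward division operator, treating the forward map and the formal intertwining relations as routine.
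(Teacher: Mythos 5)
This statement is quoted in the paper from \cite{HKZ} as background; the paper supplies no proof of it, so there is nothing internal to compare against and your proposal has to be judged on its own terms. Your necessity half is correct and is the standard argument: any intertwiner $X$ with $XM_z=M_IX$ agrees with $M_{X1}$ on polynomials, hence everywhere; boundedness forces $X1=\phi\in H^\infty=\mathrm{Mult}(L^2_a)$; and dense range gives $I=\overline{\phi\,\mathbb{C}[z]}=[\phi]$. The forward quasi-affinity $X=M_\phi$ in the sufficiency half is also fine.

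The gap is in the backward map, and it is not merely an unproved estimate: the inequality $\|p\|\le C\|\phi p\|$ to which you reduce the theorem is false in general. If it held for all polynomials $p$, then $M_\phi:\mathcal{H}\to I$ would be bounded below, hence have closed range; since that range contains $\phi\,\mathbb{C}[z]$, which is dense in $I=[\phi]$, the map $M_\phi$ would be invertible onto $I$ and $M_I=M_\phi M_z M_\phi^{-1}$ would be \emph{similar} to $M_z$. By Bourdon's theorem (quoted in this paper as Theorem 1.3), that occurs only when $I$ is generated by a Blaschke product whose zero set is a finite union of interpolating sequences. Taking $\phi(z)=\exp\bigl(-\alpha\tfrac{1+z}{1-z}\bigr)$, a bounded singular inner function generating a proper invariant subspace, quasi-similarity holds by the very theorem you are proving while similarity fails; hence $\|p\|\le C\|\phi p\|$ cannot hold. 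Put differently, any intertwiner $Y:I\to\mathcal{H}$ with $YM_I=M_zY$ satisfies $Y(\phi p)=\psi p$ where $\psi=Y\phi$, and you have committed to the one choice $\psi=1$ that would upgrade quasi-similarity to similarity. The real content of the sufficiency direction is to produce a \emph{cyclic} vector $\psi$ of the Bergman shift satisfying $\|\psi p\|\le C\|\phi p\|$ for all polynomials (cyclicity is what gives $Y$ dense range); note that $\psi$ cannot be chosen with $|\psi|\le C|\phi|$ pointwise when $\phi$ is inner, since then $\psi/\phi\in H^\infty$ and $\psi$ would fail to be cyclic. This is where Bergman-space-specific machinery such as the extremal function $G$ of $[\phi]$ and the contractive-divisor theorem enters (morally, $\psi=\phi/G$). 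You gesture at this in your final sentence, but as written your plan targets a provably false inequality, so the sufficiency half does not go through.
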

Similarly, when $J$ is the whole space, P. Bourdon solved the open problems of similar case using inner-outer factorization in \cite{Bourdon}.
\begin{thm}\cite{Bourdon}\label{2023.02211}
Let $M_{z}$ be the multiplication operator on Bergman space. If $I\in Lat(M_{z})$, then $M_{I}$ and $M_{z}$ are similar if and only if $I$ is generated by a Blaschke product whose zero set is the union of finitely many interpolating sequences.
\end{thm}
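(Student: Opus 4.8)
The plan is to convert the similarity into the statement that a single multiplication operator is bounded below, and then to extract the geometric condition on the zeros from that estimate.

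\textbf{Reduction to a single generator.} Since $M_z^{*}$ belongs to the Cowen--Douglas class $B_1(\mathbb{D})$ --- with $\ker(M_z^{*}-\bar w)$ spanned by the Bergman reproducing kernel $K_w$ --- and membership in $B_1(\mathbb{D})$ is a similarity invariant, the relation $M_I\sim_s M_z$ yields $M_z^{*}\sim_s M_I^{*}$ and hence $M_I^{*}\in B_1(\mathbb{D})$. In particular $\dim\ker M_I^{*}=\dim\big(I\ominus zI\big)=1$, so $I$ has index one and is singly generated: $I=[\varphi]$ for some $\varphi$. Thus the problem immediately restricts to singly generated invariant subspaces.

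\textbf{Identifying the intertwiner.} If $X\colon I\to L^2_a$ implements the similarity, $XM_I=M_zX$, then $X^{-1}\colon L^2_a\to I\subseteq L^2_a$ satisfies $X^{-1}M_z=M_zX^{-1}$; that is, $X^{-1}$ lies in the commutant of $M_z$. On the Bergman space this commutant is exactly $\{M_\psi:\psi\in H^{\infty}\}$, so $X^{-1}=M_\psi$ with $\psi=X^{-1}1\in H^{\infty}$. Hence $M_I\sim_s M_z$ is equivalent to the existence of $\psi\in H^{\infty}$ for which $M_\psi$ is bounded below on $L^2_a$ and $\psi L^2_a=[\psi]=I$ (boundedness below makes the range closed, and $\overline{\psi L^2_a}=[\psi]$ always).

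\textbf{Factorization and the lower estimate.} Write the inner--outer factorization $\psi=\psi_i\psi_o$ with $\psi_i$ inner and $\psi_o$ outer (both in $H^{\infty}$). Testing $M_\psi$ against normalized reproducing kernels $k_{z_n}$ at points where $|\psi_o|\to 0$, respectively at points approaching the carrier of a singular inner factor (where $|\psi_i|$ is small on entire pseudohyperbolic disks), should show that boundedness below forces $\psi_o$ to be invertible in $H^{\infty}$ and $\psi_i$ to carry no singular factor; hence $\psi_i=B$ is a Blaschke product and $[\psi]=[B]$. The crux is then
\begin{center}
$M_B$ is bounded below on $L^2_a\iff$ the zeros of $B$ form a finite union of interpolating sequences.
\end{center}
The point is that $\|Bf\|^2=\int_{\mathbb{D}}|B|^2|f|^2\,dA$ is comparable to $\|f\|^2$ exactly when the averages $\tfrac{1}{|D(z)|}\int_{D(z)}|B|^2\,dA$ over pseudohyperbolic disks are bounded below uniformly in $z$, and this uniform lower bound is equivalent to the zeros being uniformly separated and of bounded count in each disk.

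\textbf{Main obstacle.} I expect the boundedness-below equivalence to be the hard part. The implication ``interpolating $\Rightarrow$ bounded below'' should follow from a Carleson/atomic-decomposition argument controlling $\int|f|^2$ by a weighted sum over the separated zero set on which $|B|$ stays bounded below; the converse requires producing, from any failure of separation or density, test functions on which $\|Bf\|/\|f\|\to 0$, and then extracting the finite-union structure. Once this is settled, closing the loop is routine: for such $B$ one checks that $B L^2_a=[B]$ is closed with $M_B$ invertible onto $I$, giving $M_I\sim_s M_z$, and conversely the similarity forces $I=[B]$ of the stated form.
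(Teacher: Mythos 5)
First, a point of comparison: the paper does not prove this statement at all --- it is quoted from Bourdon's article \cite{Bourdon} as background --- so there is no in-paper argument to measure yours against, and your proposal must stand on its own. Its skeleton is the right one and matches the known proof: since the commutant of $M_z$ on the Bergman space $L^2_a$ is exactly $\{M_\psi:\psi\in H^\infty\}$, any similarity $XM_I=M_zX$ forces $X^{-1}=M_\psi$ with $\psi=X^{-1}1\in H^\infty$, so $M_I\sim_s M_z$ if and only if some $\psi\in H^\infty$ has $M_\psi$ bounded below on $L^2_a$ with $\psi L^2_a=I$. Note this makes your first paragraph superfluous --- you get $I=\psi L^2_a$, hence single generation, for free --- which is fortunate, because the assertion "index one implies singly generated" is the Aleman--Richter--Sundberg theorem, a deep result that does not follow softly from $M_I^*\in B_1(\mathbb{D})$.

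The genuine gap sits at what you yourself call the crux, and part of it is not a missing detail but a wrong target. The asserted equivalence --- uniform lower bounds on the pseudohyperbolic averages of $|B|^2$ hold iff the zeros are "uniformly separated and of bounded count in each disk" --- fails in both directions. Separation is not necessary (a finite union of interpolating sequences need not be separated), and bounded count per disk is not sufficient: place one zero in each Whitney box of a sparse family of Carleson boxes $Q_{h_m}$, stacking $J_m\to\infty$ dyadic layers with $\sum_m J_mh_m<\infty$. This is a separated Blaschke sequence with uniformly bounded count in every pseudohyperbolic disk, yet $\sum_{z_k\in Q_{h_m}}(1-|z_k|)\approx J_mh_m$, so using $-\log|B(z)|^2\geq\sum_k\bigl(1-\rho(z,z_k)^2\bigr)$ one gets $|B|^2\lesssim e^{-cJ_m}$ on most of $Q_{h_m}$ and $M_B$ is not bounded below; nor is this a finite union of interpolating sequences. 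The correct geometric condition is that $\sum_k(1-|z_k|^2)\delta_{z_k}$ be a Carleson measure (equivalent, by Carleson--Newman, to the finite-union statement); the count-per-disk condition only controls the local factors of $B$, while it is the Carleson condition that controls the tail product over zeros outside $D(w,r)$, which your sketch ignores entirely. Relatedly, the phrase "the separated zero set on which $|B|$ stays bounded below" is garbled --- $|B|$ vanishes there; what the sufficiency direction actually needs is that for Carleson--Newman products $|B|\geq\epsilon(\delta)>0$ off the pseudohyperbolic $\delta$-neighborhood of $Z(B)$, that the complement of this neighborhood is a dominating set in Luecking's sense, and then Luecking's reverse Carleson inequality $\int_G|f|^2\,dA\gtrsim\|f\|^2$. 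That inequality, and the necessity arguments (Harnack plus the potential-theoretic lower bounds for $-\log|\psi|$ showing that a non-Carleson zero set, a nontrivial singular factor, or a non-invertible outer factor each force $\|\psi k_w\|\to0$ along some sequence), are the real content of the theorem and are deferred in your write-up with "should show"; as written the proposal neither supplies them nor aims them at the right geometric condition.
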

Thus, Theorem \ref{2023.02211} shows that the inner functions have played an important role in two problems due to K.Zhu in \cite{ZKH2}.
It is well known that $\eta_{\alpha}(z)=\exp(-\alpha\frac{1+z}{1-z})$ is typical singular inner functions for some $\alpha>0$.
B. Korenblum found two invariant subspaces $I_{1}, I_{2}$ by such functions and gave a sufficient condition for Problem (2).
\begin{thm}\cite{BK}
For any two positive numbers $\alpha_{1}$ and $\alpha_{2}$, set $S_{k}(z)=\exp(-\alpha_{k}\frac{1+z}{1-z}), k=1, 2$.
If $I_{1}$ and $I_{2}$ be the invariant subspaces generated by $S_{1}$ and $S_{2}$, respectively, then $M_{I_{1}}$ and $M_{I_{2}}$ are similar.
\end{thm}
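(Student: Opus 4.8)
The plan is to reduce the similarity to the invertibility of a single, explicitly given intertwiner, and then to the one analytic estimate that carries all of the content. Assume without loss of generality that $\alpha_{1}<\alpha_{2}$ (the case $\alpha_{1}=\alpha_{2}$ is trivial, and the statement is symmetric in the two indices). Using the semigroup identity $\eta_{\alpha}\eta_{\beta}=\eta_{\alpha+\beta}$ for the functions $\eta_{\alpha}(z)=\exp(-\alpha\frac{1+z}{1-z})$, set $\phi:=S_{2}/S_{1}=\eta_{\alpha_{2}-\alpha_{1}}$, which is again a bounded singular inner function with $\|\phi\|_{\infty}\le 1$. Multiplication $M_{\phi}f=\phi f$ then maps $I_{1}$ boundedly into $I_{2}$, since $\phi S_{1}p=S_{2}p\in I_{2}$ for every polynomial $p$ and $S_{1}\cdot\{\text{polynomials}\}$ is dense in $I_{1}$. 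Moreover $M_{\phi}$ intertwines the two restrictions, $M_{\phi}(M_{I_{1}}f)=\phi zf=z\phi f=M_{I_{2}}(M_{\phi}f)$, so $M_{I_{1}}\sim_{s}M_{I_{2}}$ will follow as soon as $M_{\phi}\colon I_{1}\to I_{2}$ is invertible.

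First I would dispose of surjectivity and injectivity cheaply. If $M_{\phi}$ is bounded below on $I_{1}$, its range is closed; since that range contains $S_{2}\cdot\{\text{polynomials}\}$, it contains the closure $I_{2}$, and being contained in $I_{2}$ it equals $I_{2}$, while bounded below gives injectivity and a bounded inverse. Thus everything collapses to the single inequality
\begin{equation}\label{eq:sim-bb}
\|\phi f\|^{2}\ \ge\ c\,\|f\|^{2}\qquad (f\in I_{1}),
\end{equation}
for some $c>0$, where $\|\cdot\|$ is the norm of the Bergman space $L^{2}_{a}(\mathbb{D})$ and $dA$ is normalised area measure. Writing $\|\phi f\|^{2}=\|f\|^{2}-\int_{\mathbb{D}}(1-|\phi|^{2})|f|^{2}\,dA$, the estimate \eqref{eq:sim-bb} is equivalent to the assertion that $I_{1}$ embeds into $L^{2}\big((1-|\phi|^{2})\,dA\big)$ with norm strictly less than $1$; that is, the measure $(1-|\phi|^{2})\,dA$ cannot capture the full mass of any unit vector of $I_{1}$.

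To organise the proof of \eqref{eq:sim-bb} I would pass to the Cowen--Douglas picture, which is the natural home for the problem. Each adjoint $M_{I_{k}}^{*}$ lies in $B_{1}(\mathbb{D})$, with eigenvectors the reproducing kernels $K^{I_{k}}_{w}=P_{I_{k}}K_{w}$ satisfying $M_{I_{k}}^{*}K^{I_{k}}_{w}=\bar w\,K^{I_{k}}_{w}$, and each is a $2$-hypercontraction as a restriction of the Bergman shift. A direct computation gives $M_{\phi}^{*}K^{I_{2}}_{w}=\overline{\phi(w)}\,K^{I_{1}}_{w}$, so testing \eqref{eq:sim-bb} on kernels already forces the diagonal comparison $|\phi(w)|^{2}\,\|K^{I_{1}}_{w}\|^{2}\asymp\|K^{I_{2}}_{w}\|^{2}$. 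Since $\log\|K^{I_{k}}_{w}\|^{2}$ decomposes as the harmonic term $\log|S_{k}(w)|^{2}$, which drops out of the curvature, the Bergman term $-2\log(1-|w|^{2})$, and a correction, this comparison is governed by a single correction term whose size the $2$-hypercontractive kernel estimates control away from the boundary point $z=1$. The substance of the argument is then to upgrade this diagonal information to the full estimate \eqref{eq:sim-bb}.

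The hard part will be the behaviour at the atom $z=1$, and this is exactly where the special form of $S_{1},S_{2}$ is indispensable. The factor $1-|\phi|^{2}$ is appreciable only on a horodisc resting at $z=1$ and negligible elsewhere, so \eqref{eq:sim-bb} can fail only through unit vectors of $I_{1}$ that concentrate their Bergman mass on that horodisc. The point is that no such vector exists: membership in $I_{1}=[\,\eta_{\alpha_{1}}]$ forces a function to carry a definite proportion of its mass away from $z=1$, since the generator $\eta_{\alpha_{1}}$ already decays to zero there and any compensating growth is capped by the hypercontractive lower bound on the diagonal kernel $K^{I_{1}}(w,w)$. I would make this quantitative either as a Carleson-type bound showing that $(1-|\phi|^{2})\,dA$ embeds $I_{1}$ into $L^{2}$ with norm strictly below $1$, or, equivalently, through the kernel comparison of the previous step; the semigroup relation $\eta_{\alpha_{1}}\eta_{\alpha_{2}-\alpha_{1}}=\eta_{\alpha_{2}}$ is what forces the singular parts of $K^{I_{1}}$ and $K^{I_{2}}$ to cancel to leading order near $z=1$, leaving a bounded two-sided comparison. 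Securing this strict embedding bound is the crux; granting it, \eqref{eq:sim-bb} holds, $M_{\phi}$ is invertible, and therefore $M_{I_{1}}\sim_{s}M_{I_{2}}$.
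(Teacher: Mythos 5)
First, a caveat about the ground truth: the paper contains no proof of this statement. It is quoted verbatim from \cite{BK}, which the bibliography lists as ``B. Korenblum, private communication,'' so there is no argument of the authors' to compare yours against; your proposal has to be judged on its own.

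On its own terms the skeleton is correct and is surely the intended route. With $\alpha_1<\alpha_2$ and $\phi=S_2/S_1=\eta_{\alpha_2-\alpha_1}$, the operator $M_\phi$ does map $I_1$ boundedly into $I_2$, does satisfy $M_\phi M_{I_1}=M_{I_2}M_\phi$, and, if it is bounded below, is invertible onto $I_2$ because its (then closed) range contains the dense subset $S_2\cdot\{\mbox{polynomials}\}$ of $I_2$; the identity $M_\phi^*K^{I_2}_w=\overline{\phi(w)}\,K^{I_1}_w$ is also correct. The genuine gap is that the one inequality carrying all of the content, $\|\phi f\|^2\ge c\|f\|^2$ for $f\in I_1$ --- equivalently $\int_{\mathbb{D}}|\eta_{\alpha_2}p|^2\,dA\ge c\int_{\mathbb{D}}|\eta_{\alpha_1}p|^2\,dA$ uniformly over polynomials $p$ --- is nowhere established; you concede as much (``securing this strict embedding bound is the crux; granting it\,\dots''). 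Neither of the two substitutes you offer closes it: the diagonal comparison $|\phi(w)|^2\|K^{I_1}_w\|^2\asymp\|K^{I_2}_w\|^2$ is only a \emph{necessary} condition for invertibility of $M_\phi$ (and the paper itself stresses, right after the Cowen--Douglas curvature inequality, that such pointwise kernel and curvature comparisons do not imply similarity --- the Hardy and Bergman shifts being the standard counterexample), while the assertion that no unit vector of $I_1$ concentrates its mass on a small horodisc at $z=1$ is exactly the nontrivial structural fact about $[\eta_{\alpha_1}]$ that would have to be proved, e.g.\ by a quantitative two-weight Carleson embedding for the weights $e^{-2\alpha_k(1-|z|^2)/|1-z|^2}$ or by Korenblum's domination theory for premeasures. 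As it stands, the proposal is a correct and clean reduction followed by an unproven claim, not a proof.
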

These three theorems partly answer two open problems proposed by K. Zhu from the perspective of function theory. However, these two problems are still unsolved.
For the multiplication operator $M_{z}$ on some analytic functional Hilbert space,
if an operator $T$ is similar to $M_{z}^{*}\oplus M_{z}^{*}$, it is not clear for an invariant subspace $\mathcal{M}$ of $T$, when the restriction $T|_{\mathcal{M}}$ is similar to $M_{z}^{*}$.

In 1978, M. J. Cowen and R. G. Douglas introduced a class of operators denoted by $B_n(\Omega)$ in \cite{CD}.
As we all know that the adjoint of multiplication operator on the Bergman space belongs to $B_1(\mathbb{D})$. For the similarity of operators in $B_n(\mathbb{D})$ and even $B_1(\mathbb{D})$ is still an open problem raised by M. J. Cowen and R. G. Douglas.

Recently,  R. G. Douglas, H. Kwon, S. Treil, Y. Hou and the first author in \cite{DKS, HJK} described the similar invariants of $m-$hypercontractive operators in $B_n(\mathbb{D})$ and weighted Bergman shifts by curvatures of the Hermitian holomorphic bundles induced by corresponding operators.

\begin{thm}\cite{DKS, HJK}\label{DKS}
Let $T \in B_n(\mathbb{D})$ be a $m$-hypercontraction and $M_z$ is the multiplication operator on the weighted Bergman space.
Then $T \sim_{s}\bigoplus\limits^n_{i=1}M^*_{z}$ if and only if there exists a bounded subharmonic function
$\psi$ such that $$\mbox{trace}\,\mathcal{K}_{M^{*(n)}_z}(\omega)-\mbox{trace}\,\mathcal{K}_T(\omega)=\frac{\partial ^{2}}{\partial \omega\partial\bar{\omega}}\psi(\omega), \omega\in \mathbb{D}.$$
\end{thm}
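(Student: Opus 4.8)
The plan is to translate the similarity of operators into the language of the associated Hermitian holomorphic vector bundles, and then reduce the problem to a curvature comparison on determinant line bundles that can be controlled by a $\bar\partial$-argument. To each $T\in B_n(\mathbb{D})$ one associates the rank $n$ Hermitian holomorphic bundle $E_T$ with fibre $\ker(T-\omega)$ over $\omega$, and likewise $E_{M^{*(n)}_z}$ for the model $\bigoplus_{i=1}^n M^*_z$. The first observation I would record is that the trace of the Cowen--Douglas curvature is the curvature of the determinant line bundle, namely $\operatorname{trace}\mathcal{K}_T=-\partial_\omega\partial_{\bar\omega}\log\det h_T$, where $h_T$ denotes the metric of $E_T$ in a holomorphic frame. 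Thus the displayed identity says exactly that the two determinant metrics differ, up to a harmonic term, by the factor $e^{\psi}$, and the boundedness of $\psi$ is what will ultimately yield two-sided metric bounds. In these terms, similarity $T\sim_s\bigoplus_{i=1}^n M^*_z$ is equivalent to the existence of a bounded invertible holomorphic bundle map $E_T\to E_{M^{*(n)}_z}$; the forward implication is the elementary part of Cowen--Douglas theory, while reconstructing a bounded intertwiner from bundle data is the substantive part to be carried out by the $\bar\partial$-construction.

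For the necessity (``only if'') direction I would start from an invertible intertwiner $X$ with $XT=\bigl(\bigoplus M^*_z\bigr)X$. Standard Cowen--Douglas theory shows that $X$ restricts to a holomorphic, invertible, two-sidedly bounded bundle map $\widehat X\colon E_T\to E_{M^{*(n)}_z}$, represented in frames by an invertible holomorphic matrix $A(\omega)$. Comparing metrics gives $\det h_T\asymp|\det A|^2\,\det h_{M^{*(n)}_z}$, and since $\det A=\phi$ is a nowhere-vanishing holomorphic function on the simply connected domain $\mathbb{D}$, we obtain $\log\det h_T-\log\det h_{M^{*(n)}_z}=\log|\phi|^2+\psi$ with $\psi$ bounded (its boundedness being precisely the two-sided boundedness of $\widehat X$). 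Applying $\partial_\omega\partial_{\bar\omega}$ annihilates the harmonic term $\log|\phi|^2$ and yields the asserted curvature identity. That $\psi$ may be taken subharmonic is where the $m$-hypercontraction hypothesis enters: it furnishes the curvature inequality $\operatorname{trace}\mathcal{K}_{M^{*(n)}_z}\ge\operatorname{trace}\mathcal{K}_T$ in the distributional sense, forcing $\partial_\omega\partial_{\bar\omega}\psi$ to be a positive measure.

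For the sufficiency (``if'') direction I would run the construction in reverse, but now the harmonic/holomorphic splitting must be produced by hand. Given the curvature identity with $\psi$ bounded subharmonic, on $\mathbb{D}$ one writes $\log\det h_T-\log\det h_{M^{*(n)}_z}-\psi$ as the real part of a holomorphic function, producing a nowhere-vanishing holomorphic $\phi$ with $\det h_T=e^{\psi}|\phi|^2\det h_{M^{*(n)}_z}$, whence $c\le \det h_T/\bigl(|\phi|^2\det h_{M^{*(n)}_z}\bigr)\le C$ by boundedness of $\psi$. The essential step is then to upgrade this \emph{determinant} comparability to a genuine bounded invertible holomorphic bundle map between the full rank $n$ bundles, equivalently to a bounded invertible $X$ intertwining the operators. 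I would obtain this by solving an inhomogeneous $\bar\partial$-equation for the correction of an initial smooth bundle map, using H\"ormander's weighted $L^2$ estimate with a plurisubharmonic weight built from $\psi$; boundedness of $\psi$ keeps this weight uniformly equivalent to the model weight, which is exactly what makes the resulting holomorphic map, together with its inverse, uniformly bounded. An application of the Cowen--Douglas criterion then converts the bundle isomorphism into the similarity $T\sim_s\bigoplus_{i=1}^n M^*_z$.

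The main obstacle is this last upgrade in the sufficiency direction: pointwise, or even determinant, comparability of the two metrics does \emph{not} by itself imply similarity of Cowen--Douglas operators, and the whole force of the hypothesis lies in the bounded subharmonicity of $\psi$, which is what licenses the $\bar\partial$-estimate. The delicate points are controlling the \emph{inverse} of the constructed map, so that it is bounded below and not merely bounded, and handling the non-abelian rank $n>1$ case, where one cannot argue purely on the determinant line bundle; here the rigid structure of the $m$-hypercontraction, relating $E_T$ to the weighted Bergman bundle, is used to control the off-diagonal terms and close the estimate.
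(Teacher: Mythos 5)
The paper does not prove this statement at all: Theorem \ref{DKS} is imported verbatim from \cite{DKS, HJK} as a known tool (it is invoked later, e.g.\ in Lemma \ref{9.4} and Corollary \ref{xiangsi2}), so there is no in-paper proof to measure your argument against. Judged against the actual proofs in the cited references, your outline captures the right strategy for the necessity direction, and that half of your sketch is essentially complete: the intertwiner induces a holomorphic bundle map, $\det h_T=|\det A|^2\det h_{M^{*(n)}_z}\cdot e^{\psi}$ with $\psi$ bounded by the two-sided norm bounds on $X$, the harmonic term $\log|\det A|^2$ dies under $\partial_\omega\partial_{\bar\omega}$, and subharmonicity of $\psi$ is exactly the curvature (trace) inequality supplied by the $m$-hypercontraction hypothesis.

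The sufficiency direction, however, is a program rather than a proof, and the two places where it is thin are precisely where the cited works do their real work. First, your opening claim that similarity is \emph{equivalent} to a bounded invertible holomorphic bundle map is too glib in the converse direction: a fibrewise-bounded holomorphic bundle map does not automatically extend to a bounded operator on $\mathcal{H}$. The resolution in \cite{DKS, HJK} is to invoke the model theorem (Theorem \ref{9.211} / equation (\ref{9.30}) of this paper) to realize $E_T$ inside a trivial bundle $\mathbb{D}\times E$, and to build the intertwiner as a \emph{multiplier} (a bounded holomorphic operator-valued function acting on the vector-valued weighted Bergman space), for which boundedness is automatic; you never make this reduction, so your ``bundle isomorphism $\Rightarrow$ similarity'' step is unsupported as stated. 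Second, ``H\"ormander's weighted $L^2$ estimate'' is not the tool that closes the argument: H\"ormander produces $L^2$ solutions of $\bar\partial u=f$, whereas one needs a \emph{uniformly bounded} solution, and the whole point of the hypothesis is that bounded subharmonicity of $\psi$ turns $\Delta\psi\,(1-|\omega|^2)\,dA$ into a Carleson-type measure controlling $\|\bar\partial P(\omega)\|^2$ (with $P(\omega)$ the orthogonal projection onto $\mathcal{N}(\omega)$), after which a Nikolski--Treil type $\bar\partial$-lemma yields a bounded correction making $P$ holomorphic. Without identifying that estimate, the delicate points you correctly flag (boundedness of the inverse, the rank $n>1$ case) remain open in your sketch. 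In short: necessity is fine; sufficiency names the right landmarks but omits the two lemmas that constitute the proof.
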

The above theorem gives a characterization of operator similarity in the language of complex geometry.
In this note, we will consider the general case of K. Zhu's questions using geometric quantity. When two restrictions of operators which belongs to $B_n(\mathbb{D})$ are similar?
In the main theorem, we will give some sufficient conditions in the terms of quotient of metrics of some jet Hermitian holomorphic bundles.
The paper is organized as follows. In Section 2, we introduce some notations and definitions about Cowen-Douglas operators.
In Section 3, influenced by the open problems proposed by K. Zhu, we investigate the similarity between the Cowen-Douglas operators restricted on  invariant subspaces and the direct sum of multiplication operator. At the same time, we give a sufficient condition for open problems in the case of similar equivalence.
In Section 4, we consider the reducibility of the $n$-hypercontractive operators by investigating the curvatures of the operators restricted on certain invariant subspaces.
\section{Preliminaries}
\subsection{Cowen-Douglas operator.}
An operator $T\in \mathcal{L}(\mathcal H)$ is said to be a Cowen-Douglas operator with index $n$ associated with a bounded connected open subset $\Omega$ of $\mathbb{C}$ (or $T\in B_n(\Omega)$), if $T-w$ is surjective, $\dim \mbox{ker}(T-\omega)=n$ for all $\omega\in \Omega$ and $\bigvee\limits_{\omega\in \Omega} \mbox{ker}(T-\omega)=\mathcal{H}$.
In \cite{CD}, M. J. Cowen and R. G. Douglas showed that each operator $T$ in $B_n(\Omega)$ also yields a rank $n$ Hermitian holomorphic vector bundle $E_T$ over $\Omega$. That is,
$$E_{T}=\{(x,w)\in {\mathcal H}\times \Omega \mid x\in \ker (T-w)\}
\,\,\mbox{and} \,\,\pi:E_T\rightarrow \Omega,\,\,\mbox{where}
\,\,\pi(x,w)=w.$$
Let $T\in B_n(\Omega)$. Due to $\dim \mbox{ker}(T-\omega)=n$,
we can find a holomorphic family $\{\gamma_{i}\}_{i=1}^{n}$ satisfying $\gamma_{i}(\omega)\in\mbox{ker}(T-\omega), \omega\in\Omega$.
The metric of holomorphic bundle $E_T$ at $\omega$ is a Gram matrix in the form of
$$h_T(\omega)=\big (\!\!\big (\langle \gamma_j(\omega),\gamma_i(\omega)\rangle \big ) \!\!\big )_{i,j=1}^n.$$
Then the curvature function $\mathcal{K}_T$ of $E_T$ is defined as
$\mathcal{K}_T(\omega):=-\frac{\partial}{\partial \overline{\omega}}\big(h_T^{-1}(\omega)\frac{\partial}{\partial \omega}h_T(\omega)\big)$.
Notice that the curvature can be thought of as a bundle map. According to the definition of covariant partial derivative of bundle map,
the covariant partial derivatives $ \mathcal{K}_{T, \omega^i\overline{\omega}^j}$, $i,j\in
\mathbb{N}\cup \{0\}$ of $\mathcal{K}_T$ are defined as follows:

 (1)\,\,$\mathcal{K}_{T,\omega^i\overline{\omega}^{j+1}}=\frac{\partial}{\partial \overline{\omega}}(\mathcal{K}_{T,\omega^i\overline{\omega}^{j}});$

 (2)\,\,$\mathcal{K}_{T,\omega^{i+1}\overline{\omega}^j}=\frac{\partial}{\partial \omega}(\mathcal{K}_{T,\omega^{i}\overline{\omega}^j})+[h_{T}^{-1}\frac{\partial}{\partial \omega}h_{T},\mathcal{K}_{T,\omega^{i}\overline{\omega}^j}].$

M. J. Cowen and R. G. Douglas gave the following important theorem in \cite{CD}, and showed that the curvature and its covariant partial derivatives are complete  unitary invariants of operators in the Cowen-Douglas class.
\begin{thm}\cite{CD}\label{CDT1}
Let $T$ and $\tilde{T}$ be two Cowen-Douglas operators with index n. Then $T\sim_{u}\tilde{T}$ if
and only if there exists an isometric bundle map $V: E_{T}\rightarrow E_{\tilde{T}}$
such that
$$V \big ( {\mathcal K}_{T,w^i\overline{w}^j} \big )=\big ( {\mathcal K}_{\tilde{T},w^i\overline{w}^j}\big ) V,\;
\, 0\leq i, j\leq i+j\leq n, \, (i,j)\neq (0,n),(n,0).$$
\end{thm}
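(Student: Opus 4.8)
The plan is to prove the two implications of Theorem \ref{CDT1} separately; the forward implication is immediate, while the reverse one carries essentially all of the content.

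For the forward implication, suppose $T\sim_{u}\tilde{T}$, so there is a unitary $U\colon\mathcal{H}\to\tilde{\mathcal{H}}$ with $UT=\tilde{T}U$. From $U(T-\omega)=(\tilde{T}-\omega)U$ one sees that $U$ maps $\ker(T-\omega)$ isometrically onto $\ker(\tilde{T}-\omega)$, and since $U$ is a fixed operator it sends a holomorphic frame $\{\gamma_{i}\}$ of $E_{T}$ to the holomorphic frame $\{U\gamma_{i}\}$ of $E_{\tilde{T}}$. Thus $V:=U|_{E_{T}}$ is an isometric holomorphic bundle map. In these two frames the metrics literally coincide, $\langle U\gamma_{j},U\gamma_{i}\rangle=\langle\gamma_{j},\gamma_{i}\rangle$, so the curvatures and all of their covariant derivatives agree; hence $V$ intertwines $\mathcal{K}_{T,\omega^{i}\overline{\omega}^{j}}$ with $\mathcal{K}_{\tilde{T},\omega^{i}\overline{\omega}^{j}}$ for every $i,j$, in particular on the range listed.

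For the reverse implication I would first pass from the bundle map to a comparison of metrics and then reconstruct a unitary. The mechanism is already visible when $n=1$: if the curvatures agree then $\partial_{\omega}\partial_{\overline{\omega}}\log(h_{T}/h_{\tilde{T}})=0$, so $\log(h_{T}/h_{\tilde{T}})$ is harmonic and the discrepancy can be absorbed by multiplying the frame by a nonvanishing holomorphic function; after this reframing $h_{T}=h_{\tilde{T}}$. The matrix Gram function $(\langle\gamma_{j}(v),\gamma_{i}(\omega)\rangle)_{i,j}$ is holomorphic in $v$ and antiholomorphic in $\omega$, hence determined by its restriction to the diagonal $v=\omega$, namely $h_{T}$; so equality of the metrics forces equality of the full Gram functions. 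One then defines $U$ on the dense subspace $\bigvee_{\omega}\ker(T-\omega)$ by $U\gamma_{i}(\omega)=\tilde{\gamma}_{i}(\omega)$: equality of the Gram functions makes $U$ isometric on finite combinations, so $U$ extends to a unitary, and $T\gamma_{i}(\omega)=\omega\gamma_{i}(\omega)$ gives $UT=\tilde{T}U$ on a dense set, hence everywhere.

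The main obstacle is the higher-rank version of the step ``intertwined curvature data $\Rightarrow$ the metrics agree after a holomorphic change of frame,'' since for $n>1$ the curvature $\mathcal{K}_{T}$ alone no longer determines the Hermitian holomorphic structure. I would fix a normalized holomorphic frame at a base point $\omega_{0}$ (say $h_{T}(\omega_{0})=I$ and $\partial_{\omega}h_{T}(\omega_{0})=0$), expand $h_{T}$ in a Taylor series, and express the relevant Taylor coefficients through the values $\mathcal{K}_{T,\omega^{i}\overline{\omega}^{j}}(\omega_{0})$. The point to establish is that the assignment from admissible metric jets modulo holomorphic reframing to the listed covariant derivatives is injective; the triangular structure and dimension count of this correspondence are exactly what single out the range $0\le i,j$, $i+j\le n$ with $(i,j)\neq(0,n),(n,0)$, and verifying this is the technical heart of the argument. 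Once the metrics are shown to agree locally, real-analyticity and connectedness of $\Omega$ propagate the agreement across all of $\Omega$, and the reconstruction of $U$ described above completes the proof.
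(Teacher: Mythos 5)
This theorem is quoted from \cite{CD}; the paper itself gives no proof, so your proposal can only be judged on its own terms. Your forward implication is complete and correct: a unitary intertwiner restricts to an isometric holomorphic bundle map under which the metrics, hence all curvature data, literally coincide. Your treatment of the rigidity step in the reverse direction is also essentially right: once the pointwise Gram matrices of two holomorphic frames agree, the full Gram function $\langle\gamma_j(v),\gamma_i(\omega)\rangle$ (holomorphic in $v$, antiholomorphic in $\omega$) is forced to agree by polarization, the map $\gamma_i(\omega)\mapsto\tilde\gamma_i(\omega)$ is then isometric and well defined on the span, and the density condition $\bigvee_\omega\ker(T-\omega)=\mathcal H$ lets it extend to a unitary intertwining $T$ and $\tilde T$.

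The genuine gap is exactly the step you defer: for $n>1$, the claim that a bundle map intertwining $\mathcal K_{T,\omega^i\overline\omega^j}$ for $0\le i,j$, $i+j\le n$, $(i,j)\neq(0,n),(n,0)$ forces the Hermitian holomorphic structures to agree after a holomorphic change of frame. This is not a routine ``triangular structure and dimension count''; it is the main content of the Cowen--Douglas theorem and occupies the bulk of their argument (normalized metrics at a base point, identification of the Taylor coefficients of $h_T$ with the covariant derivatives of curvature, and a careful accounting of which coefficients can be absorbed by reframing --- which is precisely what produces the excluded pairs $(0,n)$ and $(n,0)$). A proof that stops at ``the point to establish is that this assignment is injective'' has asserted the theorem rather than proved it. Separately, note that in the formulation quoted here $V$ is already assumed to be an \emph{isometric holomorphic} bundle map; under that literal hypothesis the curvature conditions are automatic and the reverse direction collapses to the rigidity argument alone, so the curvature-to-metric machinery you outline is only needed for the sharper version of the theorem in which $V$ is merely a pointwise-isometric smooth bundle map. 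You should decide which version you are proving, since that determines whether the hard step can be bypassed or must be carried out in full.
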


In particular, if $T$ and $\tilde{T}$ are Cowen-Douglas operators with index one, then $T\sim_u \tilde{T}$ if and only if $\mathcal{K}_{T}=\mathcal{K}_{\tilde{T}}$.
It is shown in Theorem \ref{CDT1} that the local geometric invariants of Cowen-Douglas operators can be regarded as global invariants from the perspective of unitary equivalence.
Since invertible holomorphic bundle map can destroy rigidity.
For similar equivalence, it is not clear how to describe global invariants in terms of local complex invariants of curvature and its covariant partial derivatives.
Subsequently, M. J. Cowen and R. G. Douglas gave a necessary condition of similarity by curvature in the next theorem.
\begin{thm}\cite{CD}\label{CDT2}
Let $T$ and $\tilde{T}$ belong to $B_{n}(\Omega)$, which are similar via the bounded invertible operator $X$. Then
$$\|X^{-1}\|^{-2}\|X\|^{-2}\|\mathcal{K}_{T}\|\leq\|\mathcal{K}_{\widetilde{T}}\|\leq\|X^{-1}\|^{2}\|X\|^{2}\|\mathcal{K}_{T}\|,$$
at each point of\, $\Omega$.
\end{thm}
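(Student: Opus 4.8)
The plan is to reduce the stated matrix inequality, at each fixed point, to a scalar Rayleigh-quotient comparison, exploiting that a similarity automatically induces a holomorphic isomorphism of the associated bundles. First I would record the consequence of the intertwining relation $XT=\widetilde{T}X$: since $X$ is invertible it carries $\ker(T-\omega)$ bijectively onto $\ker(\widetilde{T}-\omega)$ for every $\omega\in\Omega$, because $(T-\omega)x=0$ forces $(\widetilde{T}-\omega)Xx=X(T-\omega)x=0$, and symmetrically for $X^{-1}$. Hence $X$ sends each holomorphic section $\sigma$ of $E_{T}$ to a holomorphic section $X\sigma$ of $E_{\widetilde{T}}$, and as $\sigma$ ranges over all local holomorphic sections of $E_{T}$ the images $X\sigma$ range over all local holomorphic sections of $E_{\widetilde{T}}$.

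The key step is a geometric formula for the curvature norm. Fixing $\omega_{0}\in\Omega$ and a holomorphic frame $\gamma_{1},\dots,\gamma_{n}$, I would expand $\mathcal{K}_{T}=-\frac{\partial}{\partial\overline{\omega}}\big(h_{T}^{-1}\frac{\partial}{\partial\omega}h_{T}\big)$ in this frame and identify
$$-\mathcal{K}_{T}(\omega_{0})=h_{T}^{-1}\Gamma,\qquad \Gamma=\Big(\langle\gamma_{j}',\gamma_{i}'\rangle-\big(\tfrac{\partial}{\partial\omega}h_{T}\big)^{*}h_{T}^{-1}\big(\tfrac{\partial}{\partial\omega}h_{T}\big)\Big)_{i,j=1}^{n},$$
where $h_{T}=(\langle\gamma_{j},\gamma_{i}\rangle)$; the Schur complement $\Gamma$ is precisely the Gram matrix $\big(\langle P^{\perp}\gamma_{j}',P^{\perp}\gamma_{i}'\rangle\big)$ of the derivatives projected by $P^{\perp}$, the orthogonal projection onto $\ker(T-\omega_{0})^{\perp}$. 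Since $-\mathcal{K}_{T}(\omega_{0})=h_{T}^{-1}\Gamma$ is positive and self-adjoint with respect to the fibre metric, its operator norm is the Rayleigh quotient
$$\|\mathcal{K}_{T}(\omega_{0})\|=\sup_{\sigma}\frac{\|P^{\perp}\sigma'(\omega_{0})\|^{2}}{\|\sigma(\omega_{0})\|^{2}},$$
the supremum being over holomorphic sections $\sigma$ of $E_{T}$ with $\sigma(\omega_{0})\neq0$; here one uses that $\|P^{\perp}\sigma'(\omega_{0})\|=\mathrm{dist}\big(\sigma'(\omega_{0}),\ker(T-\omega_{0})\big)$ is unaffected when $\sigma$ is altered by holomorphic multiples of the frame. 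This is the index-$n$ analogue of the classical index-one identity $-\mathcal{K}_{T}=\|P^{\perp}\gamma'\|^{2}/\|\gamma\|^{2}$.

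With this formula the comparison is immediate. Given a section $\sigma$ of $E_{T}$, let $\widetilde{P}^{\perp}$ denote the orthogonal projection onto $\ker(\widetilde{T}-\omega_{0})^{\perp}$; because $X$ maps $\ker(T-\omega_{0})$ onto $\ker(\widetilde{T}-\omega_{0})$, the distance characterisation gives
$$\|\widetilde{P}^{\perp}(X\sigma)'(\omega_{0})\|=\inf_{\zeta\in\ker(T-\omega_{0})}\|X\big(\sigma'(\omega_{0})-\zeta\big)\|,$$
whence $\|X^{-1}\|^{-1}\|P^{\perp}\sigma'(\omega_{0})\|\le\|\widetilde{P}^{\perp}(X\sigma)'(\omega_{0})\|\le\|X\|\,\|P^{\perp}\sigma'(\omega_{0})\|$, while simultaneously $\|X^{-1}\|^{-1}\|\sigma(\omega_{0})\|\le\|X\sigma(\omega_{0})\|\le\|X\|\,\|\sigma(\omega_{0})\|$. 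Inserting these bounds into the Rayleigh quotient for $E_{\widetilde{T}}$ and taking the supremum over $\sigma$ yields $\|\mathcal{K}_{\widetilde{T}}(\omega_{0})\|\le\|X\|^{2}\|X^{-1}\|^{2}\|\mathcal{K}_{T}(\omega_{0})\|$; running the identical argument with the roles of $T,\widetilde{T}$ and of $X,X^{-1}$ interchanged supplies the reverse bound, and together they are the claimed inequality. I expect the genuine obstacle to lie in the second step, namely verifying the Schur-complement identity for $-\mathcal{K}_{T}(\omega_{0})$ and confirming that the operator norm taken with respect to the non-Euclidean fibre metric $h_{T}$ is indeed given by that supremum, since once the variational formula is in place the first and third steps are routine distance estimates exactly as in the index-one case.
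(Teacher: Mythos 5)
The paper does not prove this statement---it is quoted from Cowen--Douglas \cite{CD} without argument---so there is no internal proof to compare against; judged on its own, your proposal is correct and is essentially the original Cowen--Douglas argument. The Schur-complement identity $-\mathcal{K}_{T}(\omega_{0})=h_{T}^{-1}\bigl(\langle P^{\perp}\gamma_{j}',P^{\perp}\gamma_{i}'\rangle\bigr)$ and the resulting variational formula $\|\mathcal{K}_{T}(\omega_{0})\|=\sup_{\sigma}\operatorname{dist}\bigl(\sigma'(\omega_{0}),\ker(T-\omega_{0})\bigr)^{2}/\|\sigma(\omega_{0})\|^{2}$ both check out (using that $P^{\perp}\sigma'(\omega_{0})$ depends only on $\sigma(\omega_{0})$, and that $X$ carries $\ker(T-\omega_{0})$ onto $\ker(\widetilde{T}-\omega_{0})$ and sections onto sections), after which the two-sided distance estimates give the claimed inequality.
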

In fact, this necessary condition is not strong enough. Let $T$ and $\tilde{T}$ denote Hardy and Bergman shifts, respectively. We know that
$\frac{\mathcal{K}_{T}}{\mathcal{K}_{\widetilde{T}}}=\frac{1}{2}$. But they are not similar.
Based on this, M. J. Cowen and R. G. Douglas made following conjecture in \cite{CD}.
That is, let $T_{i} \in B_1(\mathbb{D})$ with the spectrums of $T_{i}$ are closure of unit disk $\mathbb{D}, i=1, 2$, then $T_{1}\sim_s T_{2}$ if and only if $\lim\limits_{\omega\rightarrow \partial \mathbb{D}}\frac{\mathcal{K}_{T_{1}}(\omega)}{\mathcal{K}_{T_{2}}(\omega)}=1.$
This conjecture was shown by D. N. Clark and G. Misra to be inaccurate in \cite{CM0, CM}.

In \cite{ZKH}, K. Zhu introduced a criterion to determine when two Cowen Douglas operators are similar (unitary equivalent).
Let $T\in B(\Omega)$. A holomorphic function $\gamma:\Omega\rightarrow \mathcal{H}$, $\gamma(w)$ in the fiber $E_T(w)$, is said to be a spanning holomorphic cross-section for $E_{T}$,
if $\overline{span}\{\gamma(w):w\in\Omega\}=\mathcal{H}$.
The main results of \cite{ZKH} proved that every Hermitian holomorphic vector bundle corresponding to the Cowen-Douglas operator has a spanning holomorphic cross-section and the similarity or unitary equivalent of the Cowen-Douglas operator can be described by it.

We know that Cowen-Douglas operators with index one contain a large number of weighted backward shift operators.
The following well-known theorem, given by A. L. Shields, describes the equivalent condition for the similarity of two weighted shift operators.
\begin{thm}\label{alger1}\cite{SAL}
Let $S$ and $T$ be weighted shift operators with non-zero weight sequences $\{\upsilon_{n}\}_{n=0}^{\infty}$ and $\{\omega_{n}\}_{n=0}^{\infty}$, respectively. Then $S\sim_{s}T$ if and only if there exist real numbers $m$ and $M$, such that
$$0<m\leq\left|\frac{\omega_{i}\omega_{i+1}\cdots\omega_{j}}{\upsilon_{i}\upsilon_{i+1}\cdots\upsilon_{j}}\right|\leq M,\ 0\leq i\leq j.$$
\end{thm}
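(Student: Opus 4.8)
The plan is to prove both implications by analysing the matrix of an intertwining operator with respect to the standard orthonormal bases. Write $Se_n=\upsilon_n e_{n+1}$ and $Tf_n=\omega_n f_{n+1}$, where $\{e_n\}$ and $\{f_n\}$ are the orthonormal bases of the two spaces. (After a diagonal unitary one may assume the weights are positive, but the computation below carries the moduli anyway.) Set $d_n=\frac{\omega_0\omega_1\cdots\omega_{n-1}}{\upsilon_0\upsilon_1\cdots\upsilon_{n-1}}$ for $n\ge 1$ and $d_0=1$. The observation to keep in mind throughout is that the two-sided bound in the statement, taken over all pairs $i\le j$, is equivalent to the single requirement that $\{|d_n|\}_{n\ge 0}$ be bounded above and bounded away from $0$, because $\frac{\omega_i\cdots\omega_j}{\upsilon_i\cdots\upsilon_j}=\frac{d_{j+1}}{d_i}$.

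For the sufficiency direction, assuming $\{|d_n|\}$ is bounded above and below, I would simply define the diagonal operator $Xe_n=d_n f_n$. A direct computation gives $XSe_n=\upsilon_n d_{n+1}f_{n+1}$ and $TXe_n=d_n\omega_n f_{n+1}$, and the recursion $d_{n+1}=d_n\,\omega_n/\upsilon_n$ shows these agree, so that $XS=TX$. The two-sided bound on $\{|d_n|\}$ makes $X$ bounded with bounded inverse, whence $S\sim_{s}T$.

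The substance lies in the necessity direction. Suppose $XS=TX$ with $X$ invertible, and let $x_{ij}=\langle Xe_j,f_i\rangle$ be its matrix entries. Comparing the coefficients of $f_i$ in $XSe_j=TXe_j$ yields the recursion $\upsilon_j x_{i,j+1}=\omega_{i-1}x_{i-1,j}$, with the convention $x_{-1,j}=0$. Reading off the case $i=0$ gives $x_{0,j}=0$ for all $j\ge 1$, and iterating the recursion down each diagonal shows that $x_{i,j}=0$ whenever $i<j$, while $x_{i,i}=d_i\,x_{00}$; thus $X$ is lower triangular with diagonal entries $d_i x_{00}$. Since $X^{-1}T=SX^{-1}$, the identical analysis applies to $X^{-1}=(y_{ij})$, showing it is lower triangular with $y_{i,i}=d_i^{-1}y_{00}$. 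The relation $\langle XX^{-1}f_0,f_0\rangle=1$ together with the vanishing of the first row of $X$ forces $x_{00}y_{00}=1$, so both $x_{00}$ and $y_{00}$ are nonzero. Finally, the elementary estimates $|x_{i,i}|\le\|X\|$ and $|y_{i,i}|\le\|X^{-1}\|$ give $|d_i|\le \|X\|/|x_{00}|$ and $|d_i|\ge |y_{00}|/\|X^{-1}\|>0$, which is exactly the required two-sided control of $\{|d_n|\}$.

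The main obstacle is the structural step in the necessity direction: one must show that an \emph{arbitrary} intertwiner, not assumed to be diagonal, is automatically lower triangular with its diagonal governed precisely by the weight-product ratios $d_i$. Once this rigidity is established, the operator norms $\|X\|$ and $\|X^{-1}\|$ convert directly into the two-sided estimate, and the normalisation $x_{00}y_{00}=1$ is what guarantees the lower bound is strictly positive.
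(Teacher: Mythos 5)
Your proof is correct. The paper states this theorem without proof, citing Shields' survey \cite{SAL}, so there is no in-paper argument to compare against; your proof is essentially the classical one from that reference. The reduction of the two-sided bound on the weight-product ratios to the boundedness of $\{|d_n|\}$ above and away from zero is right, the diagonal intertwiner $Xe_n=d_nf_n$ handles sufficiency, and for necessity the recursion $\upsilon_j x_{i,j+1}=\omega_{i-1}x_{i-1,j}$ does force any intertwiner to be lower triangular with $x_{i,i}=d_ix_{00}$ (and the same for $X^{-1}$ with $y_{i,i}=d_i^{-1}y_{00}$), after which $x_{00}y_{00}=1$ and the norm bounds $|x_{i,i}|\le\|X\|$, $|y_{i,i}|\le\|X^{-1}\|$ give exactly the required two-sided control.
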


In reality, for most operators, its structure is quite complex.
To make more progress in similar case, we need the following theorem to simplify the structure of Cowen-Douglas operators.
\begin{thm}[{Upper triangular representation theorem}, \cite{JW}]\label{(jiang1)}
Let $T\in\mathcal{L}(\mathcal{H})$ be a Cowen-Douglas operator with index $n$,
then there exists a orthogonal decomposition $\mathcal{H}=\mathcal{H}_1\oplus\mathcal{H}_2\oplus\cdots\oplus\mathcal{H}_n$ and  operators $T_{1,1}, T_{2,2}, \cdots, T_{n,n}$ in $B_1(\Omega)$ such that $T$ takes following form

\begin{equation} \label{1.1T}
T=\left ( \begin{smallmatrix}T_{1,1} & T_{1,2}& T_{1,3}& \cdots & T_{1,n}\\
0&T_{2,2}&T_{2,3}&\cdots&T_{2,n}\\
\vdots&\ddots&\ddots&\ddots&\vdots\\
0&\cdots&0&T_{n-1,n-1}&T_{n-1,n}\\
0&\cdots&\cdots&0&T_{n,n}
\end{smallmatrix}\right ).
\end{equation}
\end{thm}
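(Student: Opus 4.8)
The plan is to argue by induction on the index $n$, peeling off one $B_1(\Omega)$ summand into the upper-left corner at each stage. The case $n=1$ is immediate, since then $T=T_{1,1}\in B_1(\Omega)$ and the decomposition is trivial. So assume the statement for every Cowen-Douglas operator of index smaller than $n$, and let $T\in B_n(\Omega)$.

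First I would produce the first diagonal block. Because $E_T$ is a rank $n$ holomorphic vector bundle over the planar domain $\Omega$, it admits a nowhere-vanishing holomorphic section $\gamma$ with $\gamma(\omega)\in\ker(T-\omega)$ (for instance, one member of a holomorphic frame $\{\gamma_1,\dots,\gamma_n\}$). Set $\mathcal{H}_1:=\bigvee_{\omega\in\Omega}\gamma(\omega)$. Differentiating the eigenvalue relation $T\gamma(\omega)=\omega\gamma(\omega)$ gives $T\gamma^{(k)}(\omega)=k\,\gamma^{(k-1)}(\omega)+\omega\,\gamma^{(k)}(\omega)$, so $T$ carries each derivative into the span of $\gamma^{(k-1)}(\omega)$ and $\gamma^{(k)}(\omega)$; hence $\mathcal{H}_1$ is $T$-invariant and equals the cyclic subspace generated by $\gamma$ and its derivatives at a fixed base point. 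I would then verify that $T_{1,1}:=T|_{\mathcal{H}_1}$ lies in $B_1(\Omega)$: it has $\gamma(\omega)$ in its kernel, these kernels span $\mathcal{H}_1$ by construction, the kernel is exactly one-dimensional because $\mathcal{H}_1$ is generated by a single section, and $T_{1,1}-\omega$ is surjective on $\mathcal{H}_1$.

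Next, with respect to the orthogonal decomposition $\mathcal{H}=\mathcal{H}_1\oplus\mathcal{H}_1^{\perp}$, invariance of $\mathcal{H}_1$ forces
$$T=\begin{pmatrix} T_{1,1} & B \\ 0 & C\end{pmatrix},\qquad C:=P_{\mathcal{H}_1^{\perp}}\,T|_{\mathcal{H}_1^{\perp}}.$$
The crucial claim is that $C\in B_{n-1}(\Omega)$. For the kernel dimension I would exploit the short exact sequence relating $\ker(T-\omega)$, $\ker(T_{1,1}-\omega)$ and $\ker(C-\omega)$ induced by the compression, yielding $\dim\ker(C-\omega)=n-1$ for every $\omega$; surjectivity of $C-\omega$ follows from that of $T-\omega$ together with that of $T_{1,1}-\omega$; the eigenvectors of $C$ span $\mathcal{H}_1^{\perp}$ because those of $T$ span $\mathcal{H}$; and holomorphy of the eigenbundle of $C$ is inherited from $E_T$ by projecting a holomorphic frame and discarding the first section. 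Granting this, the induction hypothesis applied to $C$ gives an orthogonal decomposition of $\mathcal{H}_1^{\perp}$ into $n-1$ pieces $\mathcal{H}_2\oplus\cdots\oplus\mathcal{H}_n$ on which $C$ is upper triangular with $B_1(\Omega)$ diagonal blocks; prepending $\mathcal{H}_1$ delivers the stated form for $T$.

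The main obstacle is the verification that $C\in B_{n-1}(\Omega)$, and within it the surjectivity of $C-\omega$ together with the constancy of the kernel dimension. Although the dimension count is formally an index computation, one must argue carefully that passing to the compression on $\mathcal{H}_1^{\perp}$ neither creates extra kernel nor destroys surjectivity; here the Hermitian (orthogonal) nature of the splitting and the completeness of the eigenvectors of $T$ do the work. A secondary technical point is ensuring that $\gamma$ can be chosen nowhere vanishing so that $T_{1,1}$ has genuinely one-dimensional fibres over all of $\Omega$, which rests on the holomorphic triviality of vector bundles over planar domains.
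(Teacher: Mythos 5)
First, a point of comparison: the paper does not prove this statement at all --- it is quoted from the Jiang--Wang monograph \cite{JW} --- so there is no internal proof to measure your argument against, and I can only assess your plan on its own terms. Its inductive skeleton is the natural one, and its second half is sound: the claim that the compression $C=P_{\mathcal{H}_1^{\perp}}T|_{\mathcal{H}_1^{\perp}}$ lies in $B_{n-1}(\Omega)$ once $T|_{\mathcal{H}_1}\in B_1(\Omega)$ is known is exactly the paper's Lemma \ref{yin1}, proved there by the Fredholm-index argument you gesture at, so that step is fine.

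The genuine gap is the first step. You take $\gamma$ to be ``one member of a holomorphic frame,'' set $\mathcal{H}_1=\bigvee_{\omega\in\Omega}\gamma(\omega)$, and assert that $\ker(T|_{\mathcal{H}_1}-\omega)$ ``is exactly one-dimensional because $\mathcal{H}_1$ is generated by a single section.'' That is not an argument: $\mathcal{H}_1$ is a \emph{closed} linear span, and nothing you have said prevents it from containing eigenvectors of $T$ outside $\mathbb{C}\gamma(\omega)$ --- indeed nothing rules out $\mathcal{H}_1=\mathcal{H}$, in which case $T|_{\mathcal{H}_1}=T\notin B_1(\Omega)$. To see the difficulty concretely, take $T=M_z^{*}\oplus M_z^{*}$ on $H^2\oplus H^2$ and $\gamma(\omega)=\bigl(K_{\bar\omega},\,c(\omega)K_{\bar\omega}\bigr)$ for a holomorphic function $c$; this is a legitimate frame member. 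Then $\mathcal{H}_1^{\perp}$ is identified with the pairs $(-cg,\,g)$ with $g\in H^{2}$ and $cg\in H^{2}$, and the equality $\mathcal{H}_1\cap\ker(T-\omega_0)=\mathbb{C}\gamma(\omega_0)$ holds precisely when the set $\{g\in H^{2}:cg\in H^{2}\}$ contains a function not vanishing at the relevant point. For bounded $c$ this is trivial, but for an arbitrary holomorphic $c$ it is a nontrivial function-theoretic assertion; controlling it (equivalently, choosing the section so that the connecting multipliers are tame, or building $\mathcal{H}_1$ directly from the reproducing-kernel model of $T$) is precisely where the content of the theorem lies, and it is missing from your plan. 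By contrast, the surjectivity of $T|_{\mathcal{H}_1}-\omega$ that you also defer is routine: $(T-\omega)\mathcal{H}_1$ is closed, being the image of a closed subspace under a surjection with finite-dimensional kernel, and it contains the dense set $\{\gamma^{(k)}(\omega)\}_{k\ge 0}$.
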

\subsection{Reproducing kernel and operator Model theorem.}\label{2.2}
Let $M_n(\mathbb{C})$ denote the set of all $n\times n$ matrices over $\mathbb{C}$, let $\Omega$ be a bounded domain of $\mathbb{C}$.
A function $K:\Omega\times\Omega\rightarrow M_n(\mathbb{C})$ is said to be a non-negative definite kernel if for any subset $\{w_1,\cdots,w_k\}\subset\Omega$,
the $k\times k$ block matrix $(K(w_i,w_j))_{1\leq i,j\leq k}$ is non-negative definite, that is, $\sum_{i,j=1}^k\langle K(w_i,w_j)\eta_j,\eta_i\rangle\geq0,\eta_1,\cdots,\eta_k\in\mathbb{C}^n$.
Similarly, $K$ is said to be a positive definite kernel if the block matrix above is non-negative definite and invertible. In this paper, we always assume that the kernel function $K$ $K$ is holomorphic in the first variable and anti-holomorphic in the second variable, in short, it is sesqui-analytic.

Given a Hilbert space containing $\mathbb{C}^{n}$-valued analytic functions over $\Omega$. Let $e_w$ be the evaluation map from $\mathcal{H}$ to $\mathbb{C}^{n}$,
defined by $e_w(f)=f(w),~ f\in\mathcal{H}$, is bounded for any $w\in\Omega$.
For vector $\xi\in\mathbb{C}^{n}$, the function $e^*_w\xi\in\mathcal{H}$ and is denoted by $K(\cdot,w)\xi$, which has the reproducing property
$\langle f,K(\cdot,w)\xi\rangle_\mathcal{H}=\langle f(w),\xi\rangle_{\mathbb{C}^{n}}$ for any $f\in\mathcal{H}$.
For fixed but arbitrary $z,w\in\Omega$, $K(z,w)=e_ze^*_w$ is an $n\times n$ matrix.
Then the matrix-valued kernel function $K$ is called the reproducing kernel of $\mathcal{H}$ and $\mathcal{H}$ is called the reproducing kernel Hilbert space.
It is easy to verify that every reproducing kernel is a non-negative definite kernel.

There is a close relationship between the operators in the Cowen-Douglas class and adjoints of the multiplication operators on the reproducing kernel Hilbert spaces.
Let $T\in B_n(\Omega)$. Then $T$ is unitarily equivalent to
the adjoint of multiplication operator $M_z$ on a Hilbert
space $\mathcal{H}_K$ of holomorphic functions on $\Omega^*=\{w\in \mathbb{C}^m:\overline{w}\in \Omega\}$ with reproducing kernel $K$ in \cite{CD,CS2,ZKH2} and
$\ker(M_z^*-w)=\{K(\cdot, \overline{w})\xi, \xi\in \mathbb{C}^n\}$.
Conversely, with mild assumptions on the non-negative kernel $K:\Omega\times\Omega\rightarrow M_n(\mathbb{C})$, one may assume
the adjoint of multiplication operator $M_z$ acting on Hilbert space $\mathcal{H}_K$ determined by reproducing kernel $K$ belongs to $B_n(\Omega)$ (see \cite{CS2}).
Throughout this article, we require that these assumptions remain valid.

In order to generalize the model theorem given by B. Sz.-Nagy and C. Foias, J. Agler introduced the concept of $n-$hypercontraction in \cite{Agler1}.
We say $T\in\mathcal{L}(\mathcal{H})$ is an $n-$hypercontractive operator, if
$$\sum\limits_{j=0}^{k}(-1)^{j}{k \choose j}(T^{*})^{j}T^{j}$$ is positive for any $1\leq k\leq n$. When $n=1$, $T$ is called a contraction.
Corresponding to the $n-$hypercontraction, the following model theorem is widely used. That's the basis of the similarity theorem \ref{DKS}.
\begin{thm}[{Model Theorem}, \cite{Agler2}]\label{9.211}
Let $T\in\mathcal{L}(\mathcal{H})$ and $M_{z}$ is the multiplication operator on the Hilbert space $\mathcal{H}_{K}$ with reproducing kernel $K(z, \omega)=\frac{1}{(1-z\overline{\omega})^{n}}$, $z,w\in\mathbb{D}$.
Then there exists a Hilbert space $E$ and an $M_{z, E}^{*}-$invariant subspace $\mathcal{N}\subset\mathcal{H}_{K}\otimes E$, such that $T$ is unitarily equivalent to $M_{z, E}^{*}|_{\mathcal{N}}$ if and only if $T$ is an $n-$hypercontraction with $\lim\limits_{n}\|T^{n}h\|=0$ for every $h\in \mathcal{H}$.
\end{thm}
The symbol $M_{z, E}$ is represented as a multiplication operator on the vector valued space $\mathcal{H}_{K}\otimes E$.
When $E=\mathbb{C}^n$, $M_{z, E}$ can also be written as $\bigoplus\limits^n M_z$.
Then the eigenvector bundle of $T$ has the tensor structure
$\ker(T-\omega I)=span\{K(\cdot,\overline{\omega})\}\otimes \mathcal{E}(\omega)$,
where $\mathcal{E}(\omega)=\{e\in E: K(\cdot,\overline{\omega})e\in\mathcal{N}\}$.
If $t\in E_{T}$, by Remark 4.3 of \cite{JKSX1}, we have
$$\|t(\omega)\|^{2}=\frac{1}{(1-|\omega|^{2})^{n}}\biggl\|\biggl(\sum\limits_{j=0}^{n}(-1)^{j}{n \choose j}(T^{*})^{j}T^{j}\biggl)^{\frac{1}{2}}t(\omega)\biggl\|^{2}.$$
C. Ambrozie, M. Engli$\check{s}$ and V. M$\ddot{u}$ller had extended the above kind of model to the most general case, where the model has become multiplication operators on the Hilbert space of analytic functions whose reciprocal of the reproducing kernel is polynomial in \cite{AEM}.
In the following, we will introduce another version of the model theorem given by J. Arazy and M. Engli$\check{s}$ in \cite{AE}, which is more applicable.
This model theorem covers the case where the kernel function is $K(z,w)=\frac{1}{(1-z\overline{w})^\alpha},z,w\in\mathbb{D}$, $\alpha$ is not an integer and $\frac{1}{K}$ is not a polynomial.
Before we do that, let's recall some notations.

Given a Hilbert space $\mathcal{H}_K$ determined by reproducing kernel $K$ over $\Omega$, which satisfies $M_z\mathcal{H}_K\subset\mathcal{H}_K$ and all polynomials are dense in $\mathcal{H}_K$.
In light of the Gram-Schmidt orthogonalization process, we may find an orthonormal basis $\{q_l(z),l\geq0\}$ of $\mathcal{H}_{K}$ such that $K(z,w)=\sum_{l=0}^\infty q_l(z)\overline{q_l(w)}$.
Assume that $K$ is invertible and that there exists a sequence of $\{p_l(z,w),l\geq0\}$, holomorphic in $z$ and $\overline{w}$, such that
$p_l(z,\overline{w})\rightarrow\frac{1}{K}(z,w)$ as $l\rightarrow\infty$, $z,w\in\Omega$.
For any nonnegative operator $C$ and $T\in\mathcal{L}(\mathcal{H})$, set $f_{m,C}(T)=I-\sum\limits_{0\leq j<m}q_j(T)^*Cq_j(T)$.
The following theorem is based on these notations.

\begin{thm}[{Model Theorem}, \cite{AE}]\label{1.282}
Let $T\in\mathcal{L}(\mathcal{H})$ and $M_z$ be the multiplication operator on $\mathcal{H}_{K}$. Then there exist a Hilbert space $E$ and an $M_{z, E}^{*}-$invariant subspace $\mathcal{N}\subset\mathcal{H}_{K}\otimes E$ such that $T\sim_{u}M_{z, E}^{*}|_{\mathcal{N}}$
if and only if $\sup\limits\|p_l(T^*,T)\|<\infty$, $WOT-\lim\limits_{l\rightarrow\infty}p_l(T^*,T):=C$ exist and is nonnegative, and $f_{m,C}(T)\rightarrow0$ strongly as $m\rightarrow\infty$,
where WOT stands for weak operator topology.
\end{thm}
It is proved in Proposition 5.2 of \cite{JiSar} due to the first author and J. Sarkar that $f_{m,C}(T)\rightarrow0$ strongly as $m\rightarrow\infty$ can be removed when $T\in B_{n}(\mathbb{D})$ satisfies $\sup\limits\|p_l(T^*,T)\|<\infty$ and there exist a nonnegative operator $C$ such that $WOT-\lim\limits_{l\rightarrow\infty}p_l(T^*,T):=C$.

\subsection{Uniqueness of strongly irreducible decomposition up to similarity}
Let $\mathcal{A}'(T)=\{X\in \mathcal{L}(\mathcal{H})|TX=XT\}$ be the commutant of $T\in \mathcal{L}(\mathcal{H})$.
We say $T\in{\mathcal L}({\mathcal H})$ is strongly irreducible, if $\mathcal{A}'(T)$ doesn't have any nontrivial idempotent operator (see \cite{G5,Halmos}).
In fact, strongly irreducible operators are Jordan blocks and ``minimal" operators in infinite dimensional space.
In what follows, $T\in(SI)$ means that $T$ is a strongly irreducible operator.
C. K. Fong and C. L. Jiang in \cite{FJ} showed that Cowen-Douglas operators with index one are typical strongly irreducible operators.
In \cite{CFJ}, Y. Cao, J. S. Fang and C. L. Jiang introduced the $K-$theory for the first time in the study of commutant algebra of operators.
The following are some notations  and related results about K-theory.
Let $M_{k}(\mathcal{A}'(T))$ be the collection of all $k\times k$ matrices with entries from the $\mathcal{A}'(T)$ and
$$M_{\infty}(\mathcal{A}'(T))=\bigcup\limits_{k=1}^{\infty} M_{k}(\mathcal{A}'(T)). $$
Let $\text{Proj}(M_{k}(\mathcal{A}'(T)))$ be the algebraic equivalence classes of idempotents in $M_{k}(\mathcal{A}'(T))$ and
$$\bigvee(\mathcal{A}'(T))=\text{Proj}(M_{\infty}(\mathcal{A}'(T))).$$
The $K_{0}-$group of $\mathcal{A}'(T)$ (denoted by $K_0({\mathcal A}^{\prime}(T))$) is defined as the Grothendieck group of $\bigvee ({\mathcal A}^{\prime}(T))$.

\begin{defn}\cite{CFJ}\label{12.11}
Let $T\in \mathcal{L}(\mathcal{H})$. We say $\mathcal{P}=\{P_{i}\}_{i=1}^{n}(n<\infty)$, the set of idempotent elements of $\mathcal{L}(\mathcal{H})$, is a unit finite decomposition of $T$, if the following are satisfied:
\begin{itemize}
  \item [(1)] $P_i{\in} {\mathcal A}^{\prime}(T), 1{\leq}i{\leq}n$;
  \item [(2)] $P_iP_j={\delta}_{ij}P_i$\,\,~$(1{\leq}i, j{\leq}n)$, where
${\delta}_{ij}=\left\{
\begin{array}{cc}
1 &\ i=j\\
0 &\ i \neq j
\end{array}
\right. $;
  \item [(3)] $\sum\limits_{i=1}^{n}P_i=I_{\mathcal H},$\,\,where $I_{\mathcal H}$ denotes identity operator on $\mathcal H$.
\end{itemize}
In addition, the following is satisfied:
\begin{itemize}
  \item [(4)] $T|_{ran P_i}\in(SI), 1 \leq i \leq n$,
\end{itemize}
we call ${\mathcal P}=\{P_i\}_{i=1}^{n}$ is a unit finite (SI) decomposition of $T$.
\end{defn}
By Definition \ref{12.11}, we know that $T\in \mathcal{L}(\mathcal{H})$ has a unit finite (SI) decomposition is equivalent to $T$ can be written as a direct sum of finite strongly irreducible operators.
Naturally, for any $T\in B_{n}(\Omega)$, $T$ has a finite (SI) decomposition. 
\begin{defn}\cite{CFJ}\label{UD}
Let $T$ have finite (SI) decomposition. If ${\mathcal P}=\{P_i\}_{i=1}^{m}$ and ${\mathcal Q}=\{Q_i\}_{i=1}^{n}$ are two unit finite (SI) decompositions of $T$, and the following are satisfied:
\begin{itemize}
  \item [(1)] $m=n$;
  \item [(2)] there exists an invertible operator $X \in {\mathcal A}^{\prime}(T)$ and a permutation ${\Pi}$ in $\{1,2,\cdots,n\}$ such that $XQ_{{\Pi}(i)}X^{-1}=P_i\,\,(1{\leq}i{\leq}n),$
\end{itemize}
then $T$ has unique finite (SI) decomposition up to similarity.
\end{defn}
\begin{thm}\cite{CFJ}\label{9.52}
Let $T\in \mathcal{L}(\mathcal{H})$, and let $\mathcal{H}^{(n)}$ denote the direct sum of $n$ copies of $\mathcal{H}$ and $A^{(n)}$ the operator $\bigoplus\limits_{1}^{n}A$ acting on $\mathcal{H}^{(n)}$. Then the following are equivalent:
\begin{itemize}
  \item[(1)] $T\sim_{s}\bigoplus\limits_{i=1}^{k}A_{i}^{(n_{i})}$ with respect to the decomposition $\bigoplus\limits_{i=1}^{k}\mathcal{H}^{(n_{i})}$, where $k, n_{i}<\infty, A_{i}\in(SI), A_{i}\sim_{s}A_{j}(i\neq j)$, and for each natural number $n$, $T^{(n)}$ has unique finite (SI) decomposition up to similarity;
  \item[(2)] $\bigvee(\mathcal{A}'(T))\cong \mathbb{N}^{k}, K_0({\mathcal A}^{\prime}(T))\cong \mathbb{Z}^{k}$ and this isomorphism $h$ sends $[I]$ to $(n_{1}, n_{2}, \cdots, n_{k})$, i.e., $h([I])=n_{1}e_{1}+n_{2}e_{2}+\cdots+n_{k}e_{k}$, where $\{e_{i}\}_{i=1}^{k}$ are the generators of $\mathbb{N}^{k}$.
\end{itemize}
\end{thm}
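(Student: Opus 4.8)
The plan is to translate the entire statement into the language of idempotents in the commutant $\mathcal{A}'(T)$, using throughout the elementary dictionary between (not necessarily orthogonal) direct sum decompositions of $T$ and idempotents $P\in\mathcal{A}'(T)$: each such $P$ gives $\mathcal{H}=\mathrm{ran}\,P\dotplus\ker P$ into $T$-invariant summands, so $T\sim_{s}T|_{\mathrm{ran}\,P}\oplus T|_{\ker P}$. Two further entries in this dictionary drive everything. First, if idempotents $P,Q\in M_{\infty}(\mathcal{A}'(T))$ are algebraically equivalent, so that $[P]=[Q]$ in $\vee(\mathcal{A}'(T))$ via $UV=P$, $VU=Q$, then $U,V$ restrict to mutually inverse intertwiners of the restrictions, whence $T|_{\mathrm{ran}\,P}\sim_{s}T|_{\mathrm{ran}\,Q}$. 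Second, since $\mathcal{A}'(T|_{\mathrm{ran}\,P})\cong P\mathcal{A}'(T)P$, the restriction $T|_{\mathrm{ran}\,P}$ lies in $(SI)$ precisely when $P$ is a minimal (indivisible) idempotent, i.e. when $[P]$ admits no splitting as a sum of two nonzero classes in $\vee(\mathcal{A}'(T))$. (Note that in the decomposition of $(1)$ the distinct summands must be taken mutually non-similar, $A_i\not\sim_s A_j$ for $i\neq j$, as otherwise they would not be separated into different types.)

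For $(1)\Rightarrow(2)$, I would conjugate by the implementing similarity to replace $\mathcal{A}'(T)$ by $\mathcal{A}'(S)$ with $S=\bigoplus_{i=1}^{k}A_{i}^{(n_{i})}$, and write a general element of $\mathcal{A}'(S)$ as a block matrix of intertwiners. Using that each $A_{i}\in(SI)$ has a local commutant reducing to $\mathbb{C}$ modulo its radical, together with the hypothesis that every $T^{(n)}$ has a unique finite $(SI)$ decomposition up to similarity, I would identify the semisimple quotient $\mathcal{A}'(S)/\mathrm{rad}(\mathcal{A}'(S))\cong\bigoplus_{i=1}^{k}M_{n_{i}}(\mathbb{C})$. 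Because idempotents lift modulo the radical and their equivalence is detected in the quotient, one has $\vee(\mathcal{A}'(T))\cong\vee\bigl(\bigoplus_{i}M_{n_{i}}(\mathbb{C})\bigr)\cong\mathbb{N}^{k}$ and $K_{0}(\mathcal{A}'(T))\cong\mathbb{Z}^{k}$; tracking the unit through these identifications, $[I]$ maps to the tuple of block sizes $(n_{1},\dots,n_{k})$.

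For $(2)\Rightarrow(1)$, I would begin from $\vee(\mathcal{A}'(T))\cong\mathbb{N}^{k}$ with generators $e_{1},\dots,e_{k}$. Exploiting the Riesz refinement and cancellation properties of the free semigroup $\mathbb{N}^{k}$, I would realize the relation $[I]=\sum_{i}n_{i}e_{i}$ by an honest orthogonal decomposition $I=\sum_{i=1}^{k}\sum_{j=1}^{n_{i}}P_{i,j}$ into idempotents of $\mathcal{A}'(T)$ with $[P_{i,j}]=e_{i}$. By the minimality entry of the dictionary each $A_{i}:=T|_{\mathrm{ran}\,P_{i,1}}$ lies in $(SI)$; by the equivalence entry $[P_{i,j}]=e_{i}$ forces $T|_{\mathrm{ran}\,P_{i,j}}\sim_{s}A_{i}$, while $e_{i}\neq e_{j}$ forces $A_{i}\not\sim_{s}A_{j}$, giving $T\sim_{s}\bigoplus_{i}A_{i}^{(n_{i})}$. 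For the uniqueness clause I would use $\mathcal{A}'(T^{(n)})=M_{n}(\mathcal{A}'(T))$, which has the same semigroup $\vee\cong\mathbb{N}^{k}$; since $\mathbb{N}^{k}$ is free, hence cancellative with unique factorization into generators, any two unit finite $(SI)$ decompositions of $T^{(n)}$ produce the same multiset of generator classes, and the equivalence entry supplies the invertible $X\in\mathcal{A}'(T^{(n)})$ and permutation.

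The main obstacle, I expect, is the realization step in $(2)\Rightarrow(1)$: passing from the purely semigroup-level identity $[I]=\sum_{i}n_{i}e_{i}$ to a genuine finite orthogonal idempotent decomposition of $I$ inside $\mathcal{A}'(T)$ itself, rather than merely after stabilizing in $M_{\infty}$, while simultaneously certifying that each indivisible summand restricts to a strongly irreducible operator. This is exactly where the freeness of $\mathbb{N}^{k}$—furnishing both refinement and cancellation—must be converted into operator-algebraic facts about the corners $P\mathcal{A}'(T)P$ and about liftings of idempotents modulo the radical. The companion identification $\mathcal{A}'(S)/\mathrm{rad}\cong\bigoplus_{i}M_{n_{i}}(\mathbb{C})$ in $(1)\Rightarrow(2)$ carries the analogous weight and is the point at which the uniqueness hypothesis on all $T^{(n)}$ is genuinely consumed.
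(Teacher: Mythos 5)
This statement is quoted by the paper from the reference \cite{CFJ} and is not proved anywhere in the paper, so there is no in-paper argument to compare yours against; I can only assess your sketch on its own terms. Your basic dictionary --- idempotents in $\mathcal{A}'(T)$ versus direct-sum decompositions, algebraic equivalence of idempotents versus similarity of the corresponding restrictions, and minimality of an idempotent versus strong irreducibility of the corresponding summand --- is the right framework, and your reading of the (evidently misprinted) condition $A_i\sim_s A_j$ as $A_i\not\sim_s A_j$ is correct.

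There is, however, a concrete false step in your $(1)\Rightarrow(2)$ direction. You propose to identify $\mathcal{A}'(S)/\mathrm{rad}(\mathcal{A}'(S))\cong\bigoplus_{i=1}^{k}M_{n_{i}}(\mathbb{C})$ for $S=\bigoplus_i A_i^{(n_i)}$. This is not true even under all the hypotheses of (1): take $k=1$, $n_1=1$ and $A_1=M_z$ on the Bergman space, which is strongly irreducible and whose amplifications all have unique $(SI)$ decompositions (cf.\ Theorem \ref{12.13} combined with the present theorem). Its commutant is $H^{\infty}(\mathbb{D})$, a semisimple, infinite-dimensional commutative Banach algebra, so the quotient by the radical is nowhere near $\mathbb{C}$. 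The conclusion $\vee(\mathcal{A}'(M_z))\cong\mathbb{N}$ survives because idempotents in $M_n(H^{\infty}(\mathbb{D}))$ happen to be classified by rank, but that is a separate fact and does not follow from any finite-dimensionality of the semisimple quotient. The correct route consumes the uniqueness hypothesis directly: an idempotent $P\in M_n(\mathcal{A}'(T))$ yields a decomposition of $T^{(n)}$, uniqueness of $(SI)$ decompositions up to similarity and permutation assigns to $[P]$ a well-defined multiplicity vector in $\mathbb{N}^k$, and one verifies that this assignment is an isomorphism of semigroups. Separately, the realization step in $(2)\Rightarrow(1)$ --- converting the semigroup identity $[I]=\sum_i n_i e_i$ into an honest finite family of mutually orthogonal idempotents summing to $I$ inside $\mathcal{A}'(T)$ itself, rather than only after stabilization in $M_{\infty}$ --- is the technical heart of the theorem, and you leave it unargued (as you yourself flag). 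As it stands the proposal is an outline resting on one false lemma and one missing one.
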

Theorem \ref{9.52} transforms the study of operator similarity classification into the isomorphism of the $K_{0}-$group of its commutant algebra.
It is proved that if a bounded operator has a (SI) decomposition, then it is unique up to similarity.
Using Theorem \ref{9.52}, C. L. Jiang obtained the following:
\begin{thm}\cite{Jiang}\label{12.13}
If $T\in B_{n}(\Omega)\cap(SI)$, then $\bigvee(\mathcal{A}'(T))\cong \mathbb{N}, K_{0}(\mathcal{A}'(T))\cong \mathbb{Z}$.
\end{thm}

In \cite{JGJ}, C. L. Jiang, X. Z. Guo and K. Ji characterized a completely similar invariant of Cowen-Douglas operators by the following theorem.
\begin{thm}\cite{JGJ}\label{9.14}
Let $T,~S\in B_{n}(\Omega)$. Suppose that $T=T_{1}^{(n_{1})}\oplus T_{2}^{(n_{2})}\oplus\cdots\oplus T_{k}^{(n_{k})},$ where $0\neq n_{i}\in \mathbb{N}, i=1, 2, \cdots, k$, $T_{i}\in(SI)$, $T_{i}\nsim_{s}T_{j}(i\neq j)$. Then $T\sim_{s}S$ if and only if
\begin{enumerate}
\item [(1)]$(K_{0}~(\mathcal{A}'(T\oplus S)),~V(\mathcal{A}'(T\oplus S)),~ I)\cong(\mathbb{Z}^{k},~\mathbb{N}^{k},~1);$
\item [(2)]The isomorphism $h$ from $\bigvee(\mathcal{A}^{'}(T\oplus S))$ to $\mathbb{N}^{k}$ sends $[I]$ to $(2n_{1}, 2n_{2}, \ldots, 2n_{k}),$ i.e.,
    $$h([I])=2n_{1}e_{1}+2n_{2}e_{2}+\cdots+2n_{k}e_{k},$$ where $I$ is the unit of $\mathcal{A}'(T\oplus S)$ and $\{e_{i}\}_{i=1}^{k}$ are the generators of $\mathbb{N}^{k}$.
\end{enumerate}
\end{thm}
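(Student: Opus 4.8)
The plan is to reduce the whole statement to a single operator, namely $T\oplus S$, and to extract its strongly irreducible decomposition from the $K$-theoretic data by invoking Theorem \ref{9.52} together with the fact recorded above that every operator in $B_{n}(\Omega)$ admits a finite $(SI)$ decomposition, unique up to similarity. For the necessity, suppose $T\sim_{s}S$. Since $T=T_{1}^{(n_{1})}\oplus\cdots\oplus T_{k}^{(n_{k})}$ with the $T_{i}$ strongly irreducible and mutually non-similar, this is the finite $(SI)$ decomposition of $T$, and from $S\sim_{s}T$ we obtain $T\oplus S\sim_{s}\bigoplus_{i=1}^{k}T_{i}^{(2n_{i})}$. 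This exhibits $T\oplus S$ as a direct sum of exactly $k$ mutually non-similar $(SI)$ operators with multiplicities $2n_{i}$, and $(T\oplus S)^{(m)}$ has a unique $(SI)$ decomposition for every $m$ because $T\oplus S\in B_{2n}(\Omega)$. Applying the implication $(1)\Rightarrow(2)$ of Theorem \ref{9.52} to $T\oplus S$ then delivers exactly conditions (1) and (2), with the image of $[I]$ equal to $(2n_{1},\dots,2n_{k})$.

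For the sufficiency, assume (1) and (2). The implication $(2)\Rightarrow(1)$ of Theorem \ref{9.52} shows that $T\oplus S$ is similar to a direct sum of precisely $k$ mutually non-similar $(SI)$ operators; in particular $T\oplus S$ has exactly $k$ distinct $(SI)$ similarity classes, and the generators $e_{1},\dots,e_{k}$ of $\bigvee(\mathcal{A}'(T\oplus S))\cong\mathbb{N}^{k}$ are indexed by these classes. The summand $T$ already realizes the $k$ distinct classes $T_{1},\dots,T_{k}$, so a cardinality count shows these are all of them, and we may take $e_{i}$ to correspond to the class of $T_{i}$.

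Now let $E\in\mathcal{A}'(T\oplus S)$ be the idempotent projecting onto the summand $T$. Because $T=\bigoplus_{i}T_{i}^{(n_{i})}$, its class is $[E]=(n_{1},\dots,n_{k})$ in $\bigvee(\mathcal{A}'(T\oplus S))$. Since $E$ and $I-E$ are complementary, $[E]+[I-E]=[I]=(2n_{1},\dots,2n_{k})$, whence $[I-E]=(n_{1},\dots,n_{k})=[E]$. The monoid $\bigvee(\mathcal{A}'(T\oplus S))\cong\mathbb{N}^{k}$ embeds in $K_{0}\cong\mathbb{Z}^{k}$ and is therefore cancellative, so the equality $[E]=[I-E]$ (together with the corresponding equality of their complements) forces $E$ and $I-E$ to be conjugate by an invertible element $X\in\mathcal{A}'(T\oplus S)$. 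Then $X$ carries $\mathrm{ran}(I-E)$ isomorphically onto $\mathrm{ran}\,E$ and intertwines the restrictions, so $S=(T\oplus S)|_{\mathrm{ran}(I-E)}\sim_{s}(T\oplus S)|_{\mathrm{ran}\,E}=T$, as desired.

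The main obstacle is the final step of the sufficiency argument: passing from the equality of classes $[E]=[I-E]$ in $\bigvee$ to an honest conjugacy of the idempotents $E$ and $I-E$ inside $\mathcal{A}'(T\oplus S)$ itself, rather than merely a stable equivalence in some amplification $M_{N}(\mathcal{A}'(T\oplus S))$, and then reading that conjugacy as an operator similarity. This rests on the structural input packaged in Theorem \ref{9.52}, namely that modulo its radical the commutant of a direct sum of mutually non-similar $(SI)$ operators is the semisimple algebra $\bigoplus_{i}M_{2n_{i}}(\mathbb{C})$, that idempotents lift through the radical, and that $\bigvee$ and $K_{0}$ thereby record the $(SI)$ classes and their multiplicities faithfully; verifying that this bookkeeping is exact is where the real work lies, and it is exactly what the uniqueness of the finite $(SI)$ decomposition (Definition \ref{UD}) is designed to supply.
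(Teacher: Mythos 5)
The paper offers no proof of this statement: Theorem \ref{9.14} is quoted from \cite{JGJ} and used as a black box, so there is no in-paper argument to compare yours against. Judged on its own, your reduction to Theorem \ref{9.52} applied to $T\oplus S$, combined with the uniqueness of finite $(SI)$ decompositions of Cowen--Douglas operators, is the standard derivation and is essentially sound. Two remarks. First, you were right to insist on pinning the generator $e_{i}$ to the similarity class of $T_{i}$: if condition (2) were read as ``there exists some isomorphism $h$ with $h([I])=(2n_{1},\dots,2n_{k})$,'' sufficiency would fail --- take $T_{1},T_{2}\in B_{1}(\Omega)\cap(SI)$ non-similar, $T=T_{1}\oplus T_{2}^{(2)}$, $S=T_{1}^{(3)}$; then $T\oplus S=T_{1}^{(4)}\oplus T_{2}^{(2)}$ and the generator swap sends $[I]$ to $(2,4)=(2n_{1},2n_{2})$ although $T\nsim_{s}S$. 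So the normalization you impose is not cosmetic; it carries the content of the hypothesis and should be stated as part of the theorem. Second, the finale via conjugacy of the idempotents $E$ and $I-E$ is more machinery than you need, and it is precisely where you lean on an unproved cancellation (stable versus honest equivalence in $\mathcal{A}'(T\oplus S)$). You can bypass it entirely: write the $(SI)$ decomposition of $S$ as $S\sim_{s}\bigoplus_{i}T_{i}^{(l_{i})}$ (each $(SI)$ summand of $S$ must be similar to some $T_{i}$, by your cardinality count and the uniqueness of the $(SI)$ decomposition of $T\oplus S$), so that $h([I])=\sum_{i}(n_{i}+l_{i})e_{i}$; condition (2) with your normalization then forces $l_{i}=n_{i}$ for every $i$, hence $S\sim_{s}\bigoplus_{i}T_{i}^{(n_{i})}=T$ directly.
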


\section{Similarity of operator restriction on an invariant subspace.}
Let $T\in \mathcal{L}(\mathcal{H})$. The closed subspace $\mathcal{M}\subset \mathcal{H}$ is invariant under the operator $T$ if $T\mathcal{M}\subset \mathcal{M}$.
In this section, we will investigate the similarity of operators restricted on invariant subspaces by using operator Model Theorem \ref{1.282}.

We say $T\in \mathcal{L}(\mathcal{H})$ to be Fredholm, if $\dim \mbox{ker}T<\infty, \dim \mbox{coker}T<\infty$, where $\mbox{coker}T=(\mbox{ran}T)^{\bot}$ (cf.\cite{Fred}).
The Fredholm index ind$(T)$ of $T$ is defined as:
ind$(T)=\dim \mbox{ker}T-\dim \mbox{coker}T$.
For $T\in B_{n}(\Omega)$, we have ind$(T-\omega)=\dim \mbox{ker}(T-\omega)=n$, $w\in\Omega$.
\begin{lem}\cite{Fred}\label{yin0}
Let $A, B, C\in \mathcal{L}(\mathcal{H})$ and define $X: \mathcal{H}^{(2)}\rightarrow \mathcal{H}^{(2)}$ by the matrix $X=\left(\begin{matrix}A & B\\
0 & C\\
\end{matrix}\right)$. If $A$ is Fredholm, then $X$ is Fredholm if and only if $C$ is Fredholm.
\end{lem}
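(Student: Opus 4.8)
The plan is to reduce the statement to a purely algebraic fact about upper triangular matrices by passing to the Calkin algebra. Recall Atkinson's theorem: an operator $A\in\mathcal{L}(\mathcal{H})$ is Fredholm if and only if its image $\dot{A}$ in the Calkin algebra $\mathcal{Q}:=\mathcal{L}(\mathcal{H})/\mathcal{K}(\mathcal{H})$ is invertible, where $\mathcal{K}(\mathcal{H})$ denotes the compact operators. First I would identify $\mathcal{L}(\mathcal{H}^{(2)})$ with $2\times 2$ matrices over $\mathcal{L}(\mathcal{H})$ and $\mathcal{K}(\mathcal{H}^{(2)})$ with $2\times2$ matrices over $\mathcal{K}(\mathcal{H})$; since $M_2(R)/M_2(J)\cong M_2(R/J)$ for any two sided ideal $J$, the Calkin algebra of $\mathcal{H}^{(2)}$ is exactly $M_2(\mathcal{Q})$. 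Under this identification $T$ becomes the upper triangular matrix $\dot{T}=\bigl(\begin{smallmatrix}\dot{T}_1 & \dot{T}_3\\ 0 & \dot{T}_2\end{smallmatrix}\bigr)$ over $\mathcal{Q}$, and $T$, $T_1$, $T_2$ are Fredholm precisely when $\dot{T}$, $\dot{T}_1$, $\dot{T}_2$ are invertible in the relevant algebra. The hypothesis on $T_1$ becomes invertibility of $\dot{T}_1$ in $\mathcal{Q}$.

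The core of the argument is then an elementary observation valid over any unital ring $R$: if $a\in R$ is invertible, then $\bigl(\begin{smallmatrix}a & b\\ 0 & d\end{smallmatrix}\bigr)\in M_2(R)$ is invertible if and only if $d$ is invertible. I would prove this through the factorization $\bigl(\begin{smallmatrix}a & b\\ 0 & d\end{smallmatrix}\bigr)=\bigl(\begin{smallmatrix}a & 0\\ 0 & 1\end{smallmatrix}\bigr)\bigl(\begin{smallmatrix}1 & 0\\ 0 & d\end{smallmatrix}\bigr)\bigl(\begin{smallmatrix}1 & a^{-1}b\\ 0 & 1\end{smallmatrix}\bigr)$, in which the first factor is invertible because $a$ is, and the third is unipotent hence always invertible; therefore the product is invertible if and only if the middle factor is, which happens exactly when $d$ is invertible. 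Applying this with $R=\mathcal{Q}$, $a=\dot{T}_1$, $d=\dot{T}_2$ gives at once that $\dot{T}$ is invertible if and only if $\dot{T}_2$ is, that is, $T$ is Fredholm if and only if $T_2$ is Fredholm.

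To make the easy direction concrete without the Calkin algebra, I would note that if $S_1,S_2$ are parametrices for $T_1,T_2$, then $S=\bigl(\begin{smallmatrix}S_1 & -S_1T_3S_2\\ 0 & S_2\end{smallmatrix}\bigr)$ is a parametrix for $T$: a short computation using that (bounded)$\times$(compact) is compact shows $ST-I$ and $TS-I$ lie in $\mathcal{K}(\mathcal{H}^{(2)})$. The main obstacle is the reverse implication, where the parametrix of $T$ is not triangular and so does not directly produce a parametrix for $T_2$; the clean way around this is precisely the passage to $M_2(\mathcal{Q})$ together with the upper triangular invertibility lemma above, which is why I would organize the proof as in the first two paragraphs.

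Finally, I note an alternative elementary route: applying the snake lemma to the short exact sequence $0\to\mathcal{H}\to\mathcal{H}^{(2)}\to\mathcal{H}\to0$ that intertwines $T_1$, $T$ and $T_2$ yields the six term exact sequence relating $\ker T_i$ and $\operatorname{coker}T_i$, from which both implications follow by bookkeeping of finite dimensions, and which additionally gives the index identity $\operatorname{ind}T=\operatorname{ind}T_1+\operatorname{ind}T_2$. The delicate point on this route is checking that the relevant ranges are closed, since finiteness of $\dim(\operatorname{ran})^{\perp}$ does not guarantee it on its own; the Calkin algebra formulation avoids this difficulty altogether.
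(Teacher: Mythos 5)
Your proposal is correct, but note that the paper does not actually prove this lemma: it is quoted verbatim from the cited reference (Conway's \emph{A Course in Functional Analysis}) and used as a black box, so there is no in-paper argument to compare against. Your Calkin-algebra route is the standard one and is complete: the identification $\mathcal{L}(\mathcal{H}^{(2)})/\mathcal{K}(\mathcal{H}^{(2)})\cong M_2\big(\mathcal{L}(\mathcal{H})/\mathcal{K}(\mathcal{H})\big)$ is valid, the factorization
$\left(\begin{smallmatrix}a & b\\ 0 & d\end{smallmatrix}\right)=\left(\begin{smallmatrix}a & 0\\ 0 & 1\end{smallmatrix}\right)\left(\begin{smallmatrix}1 & 0\\ 0 & d\end{smallmatrix}\right)\left(\begin{smallmatrix}1 & a^{-1}b\\ 0 & 1\end{smallmatrix}\right)$
checks out, and the reduction of invertibility of $\operatorname{diag}(1,d)$ to invertibility of $d$ is immediate from reading off the corner entries of a two-sided inverse. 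Your explicit parametrix $S=\left(\begin{smallmatrix}S_1 & -S_1T_3S_2\\ 0 & S_2\end{smallmatrix}\right)$ for the easy direction also verifies correctly, and you rightly identify that the reverse direction is where a naive parametrix argument stalls and the triangular-invertibility lemma in $M_2(\mathcal{Q})$ does the real work. The one payoff the paper wants beyond the equivalence itself is the index additivity $\operatorname{ind}T=\operatorname{ind}T_1+\operatorname{ind}T_2$, stated in the sentence following the lemma; your snake-lemma remark covers that, though if you wanted to stay entirely inside the Calkin-algebra framework you could instead get it from continuity and logarithmic additivity of the index along your three-factor decomposition lifted to operators.
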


\begin{lem}\label{yin1}
Let $T$ be a bounded linear operator of the form $\left(\begin{matrix}T_{1} & T_{12}\\
0 & T_{2}\\
\end{matrix}\right)$. If $T_1\in B_{m}(\Omega)$, then $T\in B_{m+n}(\Omega)$ if and only if $T_{2}\in B_{n}(\Omega)$.
\end{lem}
\begin{proof}
Suppose that $T_1$ and $T_2$ are defined on the Hilbert space $\mathcal{H}_1$ and $\mathcal{H}_2$, respectively. Then $T_{12}\mathcal{H}_2\subset\mathcal{H}_1$ and $T(\mathcal{H}_1\oplus\mathcal{H}_2)\subset(\mathcal{H}_1\oplus\mathcal{H}_2)$.
Since $T\in B_{m+n}(\Omega)$, for any $w\in\Omega$ and $\left(\begin{matrix}y_{1}\\
y_{2} \\
\end{matrix}\right )\in\mathcal{H}_{1}\oplus\mathcal{H}_{2}$, there is $\left(\begin{matrix}x_{1}\\
x_{2} \\
\end{matrix}\right )\in\mathcal{H}_{1}\oplus\mathcal{H}_{2}$ such that
$$\left(\begin{matrix}T_{1}-\omega & T_{12}\\
0 & T_{2}-\omega\\
\end{matrix}\right )\ \left(\begin{matrix}x_{1}\\
x_{2} \\
\end{matrix}\right )=\left(\begin{matrix}(T_{1}-\omega)x_{1}+T_{12}x_{2}\\
(T_{2}-\omega)x_{2} \\
\end{matrix}\right )=\left(\begin{matrix}y_{1}\\
y_{2} \\
\end{matrix}\right ).$$That is to say, for any $w\in\Omega$ and $y_{2}\in \mathcal{H}_{2}$, there is $x_{2}\in \mathcal{H}_{2}$ such that $(T_{2}-\omega)x_{2}=y_{2}$, then
\begin{equation}\label{d1}
\mbox{ran}(T_{2}-\omega)=\mathcal{H}_{2},\ \omega\in\Omega.
\end{equation}
We know that $T-\omega,\ T_{1}-\omega$ are Fredholm, and
$$\mbox{ind}(T-\omega)=\dim \mbox{ker}(T-\omega)=m+n,\ \mbox{ind}(T_{1}-\omega)=\dim \mbox{ker}(T_{1}-\omega)=m.$$
By Lemma \ref{yin0} and equation (\ref{d1}), we have $T_{2}-\omega$ is also a Fredholm operator and ind$(T_{2}-\omega)=\dim \mbox{ker}(T_{2}-\omega)$.
Since $T_{1}\in B_{m}(\Omega)$, there exist a holomorphic frame $\{t_{i}\}_{i=1}^{m}$ of $E_{T_{1}}$. Setting $\gamma_{i}=t_{i}$ for $1\leq i\leq m$.
It is easy to verify that $\gamma_{i}(\omega)\in \ker(T-\omega), 1\leq i\leq m.$ Then we extend the basis $\{\gamma_{i}\}_{i=1}^{m}$ to the holomorphic frame $\{\gamma_{j}\}_{j=1}^{m+n}$ of $E_{T}$, where $\gamma_{j}=\gamma_{j, 1}+\gamma_{j, 2},\ m+1\leq j\leq m+n$. Note that
$$\left(\begin{matrix}T_{1}-\omega & T_{12}\\
0 & T_{2}-\omega\\
\end{matrix}\right )\ \left(\begin{matrix}\gamma_{j, 1}(\omega)\\
\gamma_{j, 2}(\omega) \\
\end{matrix}\right )=\left(\begin{matrix}(T_{1}-\omega)\gamma_{j, 1}(\omega)+T_{12}\gamma_{j, 2}(\omega)\\
(T_{2}-\omega)\gamma_{j, 2}(\omega) \\
\end{matrix}\right )=\left(\begin{matrix}0\\
0 \\
\end{matrix}\right ), $$ that means $\gamma_{j, 2}(\omega)\in \ker(T_{2}-\omega),\ m+1\leq j\leq m+n$.
We claim that $\{\gamma_{j,2}(w), m+1\leq j\leq m+n\}$ is a linear independent set for any $w\in\Omega$.
Without loss of generality, we assume that $\gamma_{m+1,2}(w)$ and $\gamma_{m+2,2}(w)$ are linearly dependence for some $w\in\Omega$,
then vectors $\gamma_{1}(w), \cdots, \gamma_{m+2}(w)$~are also. This is a contradiction.
So $\{\gamma_{j, 2}(\omega),\ m+1\leq j\leq m+n\}$ is a rank $n$ basis of $\ker(T_{2}-\omega)$ for any $w\in\Omega$ and $\Omega\in\sigma(T_2)$.
From $\bigvee \limits_{w{\in}{\Omega}} \ker(T-\omega)=\mathcal{H}_{1}\oplus\mathcal{H}_{2}$, we have
$\bigvee \limits_{w{\in}{\Omega}} \ker(T_2-\omega)=\bigvee \limits_{\omega{\in}{\Omega}} \{\gamma_{j, 2}(\omega)\}_{j=m+1}^{m+n}=\mathcal{H}_{2}$.
Therefore,~$T_{2}\in B_{n}(\Omega)$ and~$\{\gamma_{j, 2}\}_{j=m+1}^{m+n}$~is~the holomorphic frame of $E_{T_{2}}$.

Conversely, suppose that $T_1\in B_{m}(\Omega), T_{2}\in B_{n}(\Omega)$.
Then there exist two holomorphic frames $\{\gamma_{i,1}\}_{i=1}^m$ and $\{\gamma_{i,2}\}_{i=m+1}^{m+n}$ of $E_{T_1}$ and $E_{T_2}$, respectively.
Since $T_1-w$ is surjective for any $w\in\Omega$ and $\{T_{12}\gamma_{i,2}\}_{i=m+1}^{m+n}\subset\mathcal{H}_1$, there exist holomorphic functions $\{\gamma_{i,1}\}_{i=m+1}^{m+n}\subset\mathcal{H}_1$ such that $(T_1-w)\gamma_{i,1}=-T_{12}\gamma_{i,2}$, $m+1\leq i\leq m+n$.
Let $$\gamma_i=
\begin{cases}
\gamma_{i,1},\quad\quad\quad\,\ 1\leq i\leq m,\\
\gamma_{i,1}+\gamma_{i,2},\quad m+1\leq i\leq m+n.
\end{cases}$$
Then $\{\gamma_i(w)\}_{i=1}^{m+n}$ is a set of eigenvectors of $T$ with eigenvalue $w\in\Omega$.
Assume that there exist $\{k_i\}_{i=1}^{m+n}\subset\mathbb{C}$ such that $$\sum_{i=1}^{m+n}k_i\gamma_i=\left(\begin{matrix}\sum\limits_{i=1}^{m+n}k_i\gamma_{i,1}\\
\sum\limits_{i=m+1}^{m+n}k_i\gamma_{i,2} \\
\end{matrix}\right)=\left(\begin{matrix}0\\
0 \\
\end{matrix}\right). $$
It follows from $\{\gamma_{i,1}(w)\}_{i=1}^m$ and $\{\gamma_{i,2}(w)\}_{i=m+1}^{m+n}$ are sets of linearly independent vectors for any $w\in\Omega$, respectively that $k_i=0$, $1\leq i\leq m+n$.
It is proved that the vectors in it are linearly independent of each other and $dim\,\ker(T-w)=m+n$.
Since $\bigvee_{w\in\Omega}\{\gamma_{i,1},1\leq i\leq m\}=\mathcal{H}_1$ and $\bigvee_{w\in\Omega}\{\gamma_{i,2},m+1\leq i\leq m+n\}=\mathcal{H}_2$, we have $\bigvee_{w\in\Omega}\ker(T-w)=\mathcal{H}_1\bigoplus\mathcal{H}_2$.
A routine verification shows from $\mbox{ran}(T_{i}-\omega)=\mathcal{H}_{i},i=1,2$ that $\mbox{ran}(T-\omega)=\mathcal{H}_1\bigoplus\mathcal{H}_2$ for $\omega\in\Omega$.
Hence, $T\in B_{m+n}(\Omega)$.
\end{proof}

For $T$ in Lemma \ref{yin1}, if $T_1$ and $T_2$ are Cowen-Douglas operators, then ind$(T)=\mbox{ind}(T_1)+\mbox{ind}(T_2)$.

\begin{lem}\label{yin2}
Let~$T\in B_{n}(\Omega)\cap \mathcal{L}(\mathcal{H})(n\geq2)$, and $\mathcal{H}$ has an orthogonal decomposition~$\bigoplus\limits_{i=1}^{n}\mathcal{H}_{i}$. Then there exists a holomorphic frame
$\{\gamma_{i}\}_{i=1}^{n}$, such that~$\gamma_{i}=(\gamma_{i, 1}, \cdots, \gamma_{i, i}, \underbrace{0,0,\cdots,0 }_{\text{n-i}})^{T}$, where $\gamma_{i, j}\in \mathcal{H}_{j}, 1\leq i\leq n$.
\end{lem}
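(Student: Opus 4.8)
The plan is to combine the Upper triangular representation theorem (Theorem \ref{(jiang1)}) with Lemma \ref{yin1} and then induct on the index $n$, reading the decomposition $\mathcal{H}=\bigoplus_{i=1}^{n}\mathcal{H}_i$ as the one in which $T$ becomes upper triangular with diagonal blocks $T_{i,i}\in B_1(\Omega)$. The base case $n=1$ is immediate, since any holomorphic frame of $E_T$ already has the required (trivial) form. For the inductive step I would regroup the decomposition as $\mathcal{H}=\mathcal{H}_1\oplus\mathcal{K}$ with $\mathcal{K}=\bigoplus_{i=2}^{n}\mathcal{H}_i$, so that
\[
T=\begin{pmatrix} T_{1,1} & R \\ 0 & S \end{pmatrix},
\]
where $T_{1,1}\in B_1(\Omega)$ and $S$ is the compression of $T$ to $\mathcal{K}$, itself upper triangular with diagonal blocks $T_{2,2},\ldots,T_{n,n}$.

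First I would invoke Lemma \ref{yin1} with $m=1$ to conclude that $S\in B_{n-1}(\Omega)$. More importantly, the proof of that lemma furnishes a holomorphic frame of $E_T$ of the shape $\gamma_1=(t_1,0)^T$, where $t_1$ is a frame of $E_{T_{1,1}}$, together with sections $\gamma_2,\ldots,\gamma_n$ whose $\mathcal{K}$-components $\{\gamma_{j,2}\}_{j=2}^{n}$ form a holomorphic frame of $E_S$. Observe that $\gamma_1$ already has the desired triangular support, being concentrated in $\mathcal{H}_1$, so only the remaining sections need to be reshaped.

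Next I would apply the induction hypothesis to $S\in B_{n-1}(\Omega)$ with the decomposition $\mathcal{K}=\bigoplus_{i=2}^{n}\mathcal{H}_i$, obtaining a holomorphic frame $\{\delta_i\}_{i=2}^{n}$ of $E_S$ with $\delta_i$ supported in $\mathcal{H}_2\oplus\cdots\oplus\mathcal{H}_i$. Since $\{\delta_i\}$ and $\{\gamma_{j,2}\}$ are two holomorphic frames of the same bundle $E_S$, they are related by an invertible holomorphic matrix-valued transition function $g(\omega)=(g_{ij}(\omega))_{i,j=2}^{n}\in GL_{n-1}(\mathbb{C})$, so that $\delta_i=\sum_{j=2}^{n}g_{ij}\gamma_{j,2}$. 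I would then lift this transformation to the full sections by setting $\tilde\gamma_i=\sum_{j=2}^{n}g_{ij}\gamma_j$ for $2\le i\le n$. Because the passage from $\{\gamma_1,\gamma_2,\ldots,\gamma_n\}$ to $\{\gamma_1,\tilde\gamma_2,\ldots,\tilde\gamma_n\}$ is given by the block-diagonal, invertible, holomorphic matrix $\mathrm{diag}(1,g(\omega))$, the new collection is again a holomorphic frame of $E_T$; moreover the $\mathcal{K}$-component of $\tilde\gamma_i$ equals $\delta_i$, whence $\tilde\gamma_i$ is supported in $\mathcal{H}_1\oplus\mathcal{H}_2\oplus\cdots\oplus\mathcal{H}_i$ (the extra $\mathcal{H}_1$-component is allowed). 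Relabelling yields the asserted triangular frame.

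The step I expect to require the most care is the passage from the two frames of $E_S$ to the holomorphic transition matrix $g(\omega)$ and the verification that lifting $g$ to the full sections preserves both the frame property and the prescribed support pattern. The genuinely structural inputs — that the lower-right compression $S$ remains Cowen--Douglas and that the partial frame $(t_1,0)^T$ extends to a full holomorphic frame of $E_T$ — are already delivered by Lemma \ref{yin1}, so the remaining effort is essentially bookkeeping that tracks which coordinate blocks are permitted to be nonzero at each stage of the induction.
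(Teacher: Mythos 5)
Your proposal is correct and follows essentially the same route as the paper: induction on the index $n$ via the upper-triangular representation (Theorem \ref{(jiang1)}) together with Lemma \ref{yin1} applied to the splitting $\mathcal{H}=\mathcal{H}_1\oplus\bigl(\bigoplus_{i=2}^{n}\mathcal{H}_i\bigr)$. The only difference is in the lifting step: where the paper produces the $\mathcal{H}_1$-components directly by solving $(T_{1}-\omega)\gamma_{i,1}+\widetilde{T}_{1,2}\widetilde{\gamma}_{i}=0$, you instead apply the holomorphic transition matrix between the two frames of $E_S$ to the full sections coming from Lemma \ref{yin1}, which makes the preservation of the frame property immediate.
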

\begin{proof}
By Theorem \ref{(jiang1)}, for $T\in B_{n}(\Omega)$, there exist operators $T_{1}, \cdots, T_{n}$ in $\in B_{1}(\Omega)$ such that
$$T=\left(\begin{matrix}T_{1} & T_{1, 2}& \cdots & T_{1, n}\\
0 & T_{2} & \cdots & T_{2, n}\\
\vdots&\vdots & \ddots & \vdots\\0 & 0 & \cdots & T_{n}\\
\end{matrix}\right ) $$with respect to the decomposition $\mathcal{H}=\bigoplus\limits_{i=1}^{n}\mathcal{H}_{i}$, where $T_{i, j}: \mathcal{H}_{j}\rightarrow\mathcal{H}_{i}$.
Let $\gamma_{i, i}\in \ker(T_{i}-\omega)$ be non-vanishing holomorphic sections of bundles $E_{T_{i}}(1\leq i\leq n)$.
In the following, to complete the proof by mathematica induction.

When~$n=2$, we have
$T=\left(\begin{matrix}T_{1} & T_{1, 2}\\
0 & T_{2}\\
\end{matrix}\right )\ \in B_{2}(\Omega)$ and $T_{1}, T_{2}\in B_{1}(\Omega). $
It is easy to verify~$\gamma_{1}(\omega):=\gamma_{1, 1}(\omega)\in \ker(T-\omega)$. Since~$\dim \ker(T-\omega)=2$, suppose that the other non-zero holomorphic vector in the kernel space $\ker(T-\omega)$ is
~$\gamma_{2}=\widetilde{\gamma}_{2, 1}+\widetilde{\gamma}_{2, 2}$. It follows that
$$\left(\begin{matrix}T_{1}-\omega & T_{1, 2}\\
0 & T_{2}-\omega\\
\end{matrix}\right )\ \left(\begin{matrix}\widetilde{\gamma}_{2, 1}(\omega)\\
\widetilde{\gamma}_{2, 2}(\omega) \\
\end{matrix}\right )\ =\left(\begin{matrix}(T_{1}-\omega)\widetilde{\gamma}_{2, 1}(\omega)+T_{1, 2}\widetilde{\gamma}_{2, 2}(\omega)\\
(T_{2}-\omega)\widetilde{\gamma}_{2, 2}(\omega) \\
\end{matrix}\right )\ =\left(\begin{matrix}0\\
0 \\
\end{matrix}\right ),$$and~$\widetilde{\gamma}_{2, 2}(\omega)\in \ker(T_{2}-\omega)$.
We claim that~$\widetilde{\gamma}_{2, 2}\neq0$. Otherwise, $\{\gamma_{1}, \gamma_{2}\}$ is not a frame of~$E_{T}$. Setting~$\gamma_{2, 2}:=\widetilde{\gamma}_{2, 2}$. Then $\gamma_{2, 2}$ is a non-zero cross-section of holomorphic vector bundle $E_{T_{2}}$.
So the conclusion of Lemma holds when $T$ belongs to $B_{2}(\Omega)$.
Let us assume that Lemma is valid for $T\in B_{k-1}(\Omega)$. We will prove that it is also true for $T\in B_{k}(\Omega)$.

Let $T\in B_{k}(\Omega)$. By Theorem \ref{(jiang1)} again, we can write $T$ as $\left(\begin{matrix}T_{1} & \widetilde{T}_{1, 2}\\
0 & \widetilde{T}_{2}\\
\end{matrix}\right)$. According to Lemma \ref{yin1}, we know $\widetilde{T}_{2}\in B_{k-1}(\Omega)$. By the induction hypothesis, there exists a holomorphic frame
$\{\widetilde{\gamma}_{i}\}_{i=2}^{k}$ of bundle $E_{\widetilde{T}_{2}}$, such that $\widetilde{\gamma}_{i}=(\gamma_{i, 2}, \cdots, \gamma_{i, i}, \underbrace{0,0,\cdots,0 }_{\text{k-i}})^{T}$.
Then there exist holomorphic functions $\gamma_{i, 1}$ satisfying $(T_{1}-\omega)\gamma_{i, 1}+\widetilde{T}_{1, 2}\widetilde{\gamma}_{i}=0(2\leq i\leq k)$.
Setting $\gamma_{1}:=\left(\begin{matrix}\gamma_{1, 1}\\
0\\
\end{matrix}\right )$, $\gamma_{i}:=\left(\begin{matrix}\gamma_{i, 1}\\
\widetilde{\gamma}_{i}\\
\end{matrix}\right )(2\leq i\leq k)$. Cleraly, $\{\gamma_{i}\}_{i=1}^{k}$ is a holomorphic frame of $E_{T}$.
This completes the proof.
\end{proof}
\begin{lem}\label{9.2}
Suppose that the Hermitian matrix $A$ can be divided into
$\left(\begin{matrix}A_{11} & A_{12}\\
A_{12}^{*} & A_{22}\\
\end{matrix}\right )$. If $A_{11}$ is invertible, then $A$ is nonnegative definite if and only if $A_{11}$ and $A_{22}-A_{12}^{*}A_{11}^{-1}A_{12}$ are nonnegative definite.
\end{lem}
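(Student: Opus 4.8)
The plan is to produce an explicit block congruence (an $LDL^{*}$-type factorization) that diagonalizes $A$, and then invoke the fact that a congruence by an invertible matrix preserves positivity. Since $A$ is Hermitian, both diagonal blocks $A_{11}$ and $A_{22}$ are Hermitian, and as $A_{11}$ is invertible so is $A_{11}^{-1}=(A_{11}^{-1})^{*}$. I set $S:=A_{22}-A_{12}^{*}A_{11}^{-1}A_{12}$ (the Schur complement of $A_{11}$ in $A$) and introduce the invertible block lower-triangular matrix
$$L=\begin{pmatrix} I & 0\\ A_{12}^{*}A_{11}^{-1} & I\end{pmatrix},\qquad L^{*}=\begin{pmatrix} I & A_{11}^{-1}A_{12}\\ 0 & I\end{pmatrix}.$$

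First I would verify, by a direct block multiplication, the identity
$$A=L\begin{pmatrix} A_{11} & 0\\ 0 & S\end{pmatrix}L^{*}.$$
Writing $D=\mathrm{diag}(A_{11},S)$, this is the assertion $A=LDL^{*}$; the computation uses only $A_{11}^{*}=A_{11}$ and the definition of $S$, and is routine once one observes that the $(2,2)$ block of the product is $A_{12}^{*}A_{11}^{-1}A_{12}+S$, which collapses back to $A_{22}$.

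With the factorization in hand the equivalence is immediate. Because $L$ is invertible, for every vector $x$ one has $\langle Ax,x\rangle=\langle DL^{*}x,\,L^{*}x\rangle=\langle Dy,y\rangle$ with $y=L^{*}x$, and $y$ ranges over the whole space as $x$ does; hence $A$ is positive if and only if $D$ is. Since $D$ is block-diagonal, $D$ is positive exactly when $A_{11}$ and $S=A_{22}-A_{12}^{*}A_{11}^{-1}A_{12}$ are both positive, which is precisely the claim. (The argument reads verbatim whether ``positive'' means positive definite or positive semidefinite.)

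There is no genuine obstacle here: the only step requiring care is the bookkeeping in the block product $LDL^{*}$, specifically checking that the off-diagonal entries return as $A_{12}$ and $A_{12}^{*}$ and that the corrected $(2,2)$ block simplifies to $A_{22}$. Everything else is the standard principle that congruence by an invertible matrix leaves the inertia, and in particular positivity, unchanged.
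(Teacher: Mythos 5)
Your proof is correct and follows essentially the same route as the paper: both diagonalize $A$ by the block congruence with $\left(\begin{smallmatrix} I & 0\\ \pm A_{12}^{*}A_{11}^{-1} & I\end{smallmatrix}\right)$ and then use that an invertible congruence preserves positivity, so that positivity of $A$ reduces to positivity of $A_{11}$ and of the Schur complement. Writing the identity as $A=LDL^{*}$ rather than $L^{-1}A(L^{-1})^{*}=D$ is only a cosmetic difference.
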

\begin{proof}
If $A_{11}$ is invertible, then we have the following congruent transformation
$$\left(\begin{matrix}I & 0\\
-A_{12}^{*}A_{11}^{-1} & I\\
\end{matrix}\right )\left(\begin{matrix}A_{11} & A_{12}\\
A_{12}^{*} & A_{22}\\
\end{matrix}\right )\left(\begin{matrix}I & -A_{11}^{-1}A_{12}\\
0 & I\\
\end{matrix}\right )=\left(\begin{matrix}A_{11} & 0\\
0 & A_{22}-A_{12}^{*}A_{11}^{-1}A_{12}\\
\end{matrix}\right ).$$
Hence, $\left(\begin{matrix}A_{11} & 0\\
0 & A_{22}-A_{12}^{*}A_{11}^{-1}A_{12}\\
\end{matrix}\right )$ is nonnegative definite if and only if $A_{11}$ and $A_{22}-A_{12}^{*}A_{11}^{-1}A_{12}$ are nonnegative definite. We know the congruent transformation does not change the nonnegative definiteness of the matrix. This completes the proof.
\end{proof}

In the following, we will introduce some concepts and give the preliminary results extracted from \cite{AE,JiSar}, which will be used later.
\begin{defn}\cite{JiSar}\label{1.27}
Let $\mathcal{H}_{K}$ be a Hilbert space with reproducing kernel $K:\mathbb{D}\times\mathbb{D}\rightarrow\mathbb{C}$ and the multiplication operator $M_z$ on $\mathcal{H}_{K}$ is bounded.
Suppose that $M_z^*\in B_{1}(\mathbb{D})$.
We say $M_z^*$ is a Cowen-Douglas atom if
\begin{enumerate}
  \item [(1)] the set of polynomials $\mathbb{C}[z]$ is dense in $\mathcal{H}_{K}$;
  \item [(2)] there exists a sequence of polynomials $\{p_{l}(z, \overline{\omega})\}_{l}\subseteq \mathbb{C}[z, \overline{\omega}]$ such that $p_{l}(z, \overline{\omega})\rightarrow\frac{1}{K(z, \omega)}$ as $l\rightarrow \infty$, for all $z, \omega\in \mathbb{D}$;
  \item [(3)] $\sup\limits_{l}\|p_{l}(M_z, M^{*}_z)\|<\infty$;
  \item [(4)] ${\mathcal A}^{\prime}(M_z)=\{M_{\varphi}: \varphi\in H^{\infty}(\mathbb{D})\}$.
\end{enumerate}
\end{defn}

There are many operators in the class of Cowen-Douglas atoms, such as the adjoint of multiplication operators in Hardy space and weighted Bergman space.
Next is the definition of M-contractive given by the first author and J. Sarkar in \cite{JiSar}, which is an analogue of contractive.
\begin{defn}\cite{JiSar}
Let $M$ is a Cowen-Douglas atom with the sequence of polynomials $\{p_{l}(z, \overline{\omega})\}$ as in (2) of Definition \ref{1.27}.
Operator $T\in\mathcal{L}(\mathcal{H})$ is said to be $M$-contractive if
\begin{enumerate}
  \item [(1)] $\sup\limits_{l}\|p_{l}(T^{*}, T)\|<\infty$;
  \item [(2)] $WOT-\lim\limits_{l\rightarrow\infty}p_{l}(T^{*}, T)$ is a positive operator.
\end{enumerate}
\end{defn}

Based on the concept of $M$-contractive, there is the following form of the model theorem due to J. Arazy and M. Engli$\check{s}$ (\cite{AE}, Corollary 3.2).
\begin{thm}[{Model Theorem}, \cite{AE}]\label{1.282}
Let $T\in B_{n}(\mathbb{D})$. Let $M_z$ be the multiplication operator on the Hilbert space $\mathcal{H}_{K}$with reproducing kernel $K$ and $M_z^*$ be a Cowen-Douglas atom.
Then $T$ is $M_z^*$-contractive if and only if $T\sim_{u}M_{z,E}^*|_{K}$, where $E$ is a Hilbert space and $K\subset\mathcal{H}_{K}\otimes E$ is an invariant subspace of $M_{z,E}^*$.
\end{thm}

Given $\mathbb{C}^n$- and $\mathbb{C}^m$-valued reproducing kernel Hilbert spaces $\mathcal{H}$ and $\widetilde{\mathcal{H}}$, respectively, over
the domain $\Omega\subset\mathbb{C}$, a function $\phi: \Omega\rightarrow \mathcal{L}(\mathbb{C}^n, \mathbb{C}^m)$ is said to be a multiplier if $\phi f\in\widetilde{\mathcal{H}}$ for all $f\in\mathcal{H}$,
where $(\phi f)(z)=\phi(z)f(z)$ for all $z\in\Omega$. The set of all such multipliers is denoted $\mathcal{M}(\mathcal{H},\widetilde{\mathcal{H}})$. If $\mathcal{H}=\widetilde{\mathcal{H}}$,
then $\mathcal{M}(\mathcal{H},\widetilde{\mathcal{H}})=\mathcal{M}(\mathcal{H})$.
By the closed graph theorem, each $\phi\in \mathcal{M}(\mathcal{H},\widetilde{\mathcal{H}})$ induces a bounded
linear map $M_\phi:\mathcal{H}\rightarrow\widetilde{\mathcal{H}}$. Consequently, $\mathcal{M}(\mathcal{H},\widetilde{\mathcal{H}})$ is a Banach space with
$\|\phi\|_{\mathcal{M}(\mathcal{H},\widetilde{\mathcal{H}})}=\|M_{\phi}\|_{\mathcal{L}(\mathcal{H},\widetilde{\mathcal{H}})}. $
It follows from Proposition 2.4 of \cite{RS2} that $\phi\in\mathcal{M}(\mathcal{H})$ is equivalent to $M_\phi M_z=M_z M_\phi$, where $M_z$ is the multiplication operator acting on $\mathcal{H}$.
By the condition (4) of Definition \ref{1.27}, we know that $\mathcal{A}'(M_z)\cong H^\infty(\mathbb{D})$ and $\mathcal{M}(\mathcal{H}_K)=H^\infty(\mathbb{D})$.

According to the model theorem \ref{1.282},
and influenced by the similarity theorem given by R. G. Douglas, S. Treil, H. Kwon, Y. Hou and the first author in \cite{DKS, HJK} (Theorem \ref{DKS}), we have the following lemma.
\begin{lem}\label{9.4}
Let $T\in B_{n}(\mathbb{D})$. Let $M$ be a Cowen-Douglas atom on some analytic functional Hilbert space $\mathcal{H}_{K}$ with reproducing kernel $K$.
If $T$ is M-contractive, then $T\sim_{s}M^{(n)}$ if and only if there is a bounded subharmonic function $\varphi$ over $\mathbb{D}$ such that
$$trace\ \mathcal{K}_{M^{(n)}}(w)-trace\ \mathcal{K}_{T}(w)=\frac{\partial ^{2}}{\partial w\partial\bar{w} }\varphi(w),\ w\in\mathbb{D}. $$
\end{lem}
\begin{thm}\label{xiangsi}
Let $T\in B_{n}(\mathbb{D})$. Let $M$ be a Cowen-Douglas atom on some analytic functional Hilbert space $\mathcal{H}_{K}$ with reproducing kernel $K$.
Suppose that there exist a metric $h_T$ of $E_T$ and a constant $C_{1}>0$ such that $\frac{det\,h_{T}(\omega)}{K^{n}(\overline{\omega}, \overline{\omega})}<C_{1}$ for any $\omega\in \mathbb{D}$.
If there exists $\mathcal{M}\in Lat(T)$ such that
\begin{enumerate}
  \item [(1)] $T|_{\mathcal{M}}\in B_{m}(\mathbb{D}), m<n$;
  \item [(2)] $T|_{\mathcal{M}}$ and $(T^{*}|_{\mathcal{M}^{\perp}})^{*}$ are $M$-contractive;
  \item [(3)] $\inf\big\{\frac{det\,h_{T|_{\mathcal{M}}}(\omega)\cdot det\,h_{(T^{*}|_{\mathcal{M}^{\perp}})^{*}}(\omega)}{K^{n}(\overline{\omega}, \overline{\omega})},w\in\mathbb{D}\big\}>0$,
\end{enumerate}
then $T^{*}|_{\mathcal{M}^{\perp}}\sim_{s}M^{*(n-m)}$.
\end{thm}
\begin{proof}
Since $\mathcal{M}\in Lat(T)$, $T$ has the following form for the space decomposition of $\mathcal{H}=\mathcal{M}\oplus\mathcal{M}^{\perp}$:
$$T=\left(\begin{matrix}
T|_{\mathcal{M}} & P_{\mathcal{M}}T|_{\mathcal{M}^{\perp}}\\
0 & P_{\mathcal{M}^{\perp}}T|_{\mathcal{M}^{\perp}} \\
\end{matrix}\right)\begin{matrix}
\mathcal{M} \\
\mathcal{M}^{\perp} \\
\end{matrix},$$
where $T|_{\mathcal{M}}$ denotes the operator whose $T$ is restricted to $\mathcal{M}$ and
$P_{\mathcal{M}}, P_{\mathcal{M}^{\perp}}$ denote the orthogonal projection onto $\mathcal{M}$ and $\mathcal{M}^{\perp}$, respectively.
Similarly, we have
$$T^{*}=\left(\begin{matrix}
(T|_{\mathcal{M}})^{*} & 0\\
(P_{\mathcal{M}}T|_{\mathcal{M}^{\perp}})^{*} & (P_{\mathcal{M}^{\perp}}T|_{\mathcal{M}^{\perp}})^{*} \\
\end{matrix}\right )\begin{matrix}
\mathcal{M} \\
\mathcal{M}^{\perp} \\
\end{matrix}=\left(\begin{matrix}
P_{\mathcal{M}}T^{*}|_{\mathcal{M}} & P_{\mathcal{M}}T^{*}|_{\mathcal{M}^{\perp}}\\
P_{\mathcal{M}^{\perp}}T^{*}|_{\mathcal{M}} & P_{\mathcal{M}^{\perp}}T^{*}|_{\mathcal{M}^{\perp}} \\
\end{matrix}\right)\begin{matrix}
\mathcal{M} \\
\mathcal{M}^{\perp} \\
\end{matrix}. $$
It follows that $P_{\mathcal{M}}T^{*}|_{\mathcal{M}^{\perp}}=0$ and $\mathcal{M}^{\perp}\in Lat(T^{*})$. Thus, $$T=\left(\begin{matrix}
T|_{\mathcal{M}} & P_{\mathcal{M}}T|_{\mathcal{M}^{\perp}}\\
0 & (T^{*}|_{\mathcal{M}^{\perp}})^{*} \\
\end{matrix}\right)\begin{matrix}
\mathcal{M} \\
\mathcal{M}^{\perp} \\
\end{matrix}:=\left(\begin{matrix}
T_{0} & T_{01}\\
0 & T_{1} \\
\end{matrix}\right)\begin{matrix}
\mathcal{M} \\
\mathcal{M}^{\perp} \\
\end{matrix}.$$
Since $T\in B_{n}(\mathbb{D})$ and $T|_{\mathcal{M}}\in B_{m}(\mathbb{D})$, by Lemma \ref{yin1}, we obtain $(T^{*}|_{\mathcal{M}^{\perp}})^{*}\in B_{n-m}(\mathbb{D})$.
We can find a holomorphic frames $\{\gamma_{i}\}_{i=1}^{n}$ of Hermitian holomorphic vector bundle $E_{T}$ from Lemma \ref{yin2} in the form
$\gamma_{i}=(\gamma_{i, 1}, \cdots, \gamma_{i, i}, \underbrace{0,0,\cdots,0 }_{\text{n-i}})^{T}$ and $\gamma_{i, i}\neq0$. Then the Gramian metric $h_T$ of $E_{T}$ is
$$h_{T}=\left(\begin{matrix}\|\gamma_{11}\|^{2} & \langle\gamma_{2, 1}, \gamma_{1, 1}\rangle & \cdots & \langle\gamma_{n, 1}, \gamma_{1, 1}\rangle\\
\langle\gamma_{1, 1}, \gamma_{2, 1}\rangle & \|\gamma_{21}\|^{2}+\|\gamma_{22}\|^{2} & \cdots & \langle\gamma_{n, 1}, \gamma_{2, 1}\rangle+\langle\gamma_{n, 2}, \gamma_{2, 2}\rangle\\
\vdots&\vdots & \ddots & \vdots\\ \langle\gamma_{1, 1}, \gamma_{n, 1}\rangle & \langle\gamma_{2, 1}, \gamma_{n, 1}\rangle+\langle\gamma_{2, 2}, \gamma_{n, 2}\rangle & \cdots & \|\gamma_{n, 1}\|^{2}+\cdots+\|\gamma_{n,n}\|^{2}\\
\end{matrix}\right ). $$
Next, we divide $h_{T}$ into $\left(\begin{matrix}
A & B\\
C & D\\
\end{matrix}\right )$ such that $$A=\left(\begin{matrix}\|\gamma_{11}\|^{2} & \langle\gamma_{2, 1}, \gamma_{1, 1}\rangle & \cdots & \langle\gamma_{m, 1}, \gamma_{1, 1}\rangle\\
\langle\gamma_{1, 1}, \gamma_{2, 1}\rangle & \|\gamma_{21}\|^{2}+\|\gamma_{22}\|^{2} & \cdots & \langle\gamma_{m, 1}, \gamma_{2, 1}\rangle+\langle\gamma_{m, 2}, \gamma_{2, 2}\rangle\\
\vdots&\vdots & \ddots & \vdots\\ \langle\gamma_{1, 1}, \gamma_{m, 1}\rangle & \langle\gamma_{2, 1}, \gamma_{m, 1}\rangle+\langle\gamma_{2, 2}, \gamma_{m, 2}\rangle & \cdots & \|\gamma_{m, 1}\|^{2}+\cdots+\|\gamma_{m, m}\|^{2}\\
\end{matrix}\right ). $$
Then the corresponding parts of $B, C$ and $D$ can be obtained.
By the proof process of Lemma \ref{yin2}, we know $A=h_{T|_{\mathcal{M}}}$ and
$$h_{(T^{*}|_{\mathcal{M}^{\perp}})^{*}}=\left(\begin{matrix}\|\gamma_{m+1,m+1}\|^{2} &  \cdots & \langle\gamma_{n,m+1}, \gamma_{m+1,m+1}\rangle\\
\langle\gamma_{m+1,m+1}, \gamma_{m+2,m+1}\rangle & \cdots & \langle\gamma_{n,m+1}, \gamma_{m+2,m+1}\rangle+\langle\gamma_{n,m+2}, \gamma_{m+2,m+2}\rangle\\
\vdots&  & \vdots\\ \langle\gamma_{m+1,m+1}, \gamma_{n,m+1}\rangle & \cdots & \|\gamma_{n,m+1}\|^{2}+\cdots+\|\gamma_{n, n}\|^{2}\\
\end{matrix}\right).$$
It follows that there exists $D_{2}$, such that $D=h_{(T^{*}|_{\mathcal{M}^{\perp}})^{*}}+D_{2}$.

It is well known that for any $\left(\begin{matrix}M_{1} & M_{2}\\
M_{3} & M_{4}\\
\end{matrix}\right)$,
when $M_{1}$ is invertible, $$det \left(\begin{matrix}M_{1} & M_{2}\\
M_{3} & M_{4}\\
\end{matrix}\right)=det(M_{1})\cdot det(M_{4}-M_{3}M_{1}^{-1}M_{2}). $$ Note that the Gramian matrix $h_{T|_{\mathcal{M}}}$ is invertible, so
\begin{equation}\label{9.41}
\begin{array}{lll}
det\,h_{T}
&=&det\,(h_{T|_{\mathcal{M}}})\cdot det\,[D-C(T|_{\mathcal{M}})^{-1}B]\\
&=&det\,(h_{T|_{\mathcal{M}}})\cdot det\,[h_{(T^{*}|_{\mathcal{M}^{\perp}})^{*}}+D_{2}-C(T|_{\mathcal{M}})^{-1}B].
\end{array}
\end{equation}
Define a new vector set as the following: $$\widehat{\gamma}_{i}:=
\left\{
\begin{array}{cc}
\gamma_{i}, 1\leq i\leq m;\\
(\gamma_{i, 1}, \cdots, \gamma_{i, m}, \underbrace{0,0,\cdots,0 }_{\text{n-m}})^{T}, m+1\leq i\leq n.
\end{array}
\right .$$  The Gramian matrix corresponding to vectors $\{\widehat{\gamma}_{i}\}_{i=1}^{n}$ is
$\left(\begin{matrix}
A & B\\
C & D_{2}\\
\end{matrix}\right )$, which is nonnegative definite. From the invertibility of Gramian matrix $A=h_{T|_{\mathcal{M}}}$ and Lemma \ref{9.2}, we know $D_{2}-C(T^{*}|_{\mathcal{M}})^{-1}B$ is also nonnegative definite.

For the nonnegative definite matrices $N_{1}, N_{2}$, we have $det(N_{1}+N_{2})\geq det(N_{1})+det(N_{2})$. Then equation (\ref{9.41}) can imply that
$$\begin{array}{lll}
det\,h_{T}
&\geq&det(h_{T|_{\mathcal{M}}})\cdot[det(h_{(T^{*}|_{\mathcal{M}^{\perp}})^{*}})+det(D_{2}-C(h_{T|_{\mathcal{M}}})^{-1}B)]\\
&=&det(h_{T|_{\mathcal{M}}})\cdot det(h_{(T^{*}|_{\mathcal{M}^{\perp}})^{*}})+det(h_{T|_{\mathcal{M}}})\cdot det[D_{2}-C(h_{T|_{\mathcal{M}}})^{-1}B]\\
&\geq&det(h_{T|_{\mathcal{M}}})\cdot det(h_{(T^{*}|_{\mathcal{M}^{\perp}})^{*}})\\
&=&det(h_{T|_{\mathcal{M}}\oplus (T^{*}|_{\mathcal{M}^{\perp}})^{*}}).
\end{array}$$
Setting $\widetilde{T}=T|_{\mathcal{M}}\oplus (T^{*}|_{\mathcal{M}^{\perp}})^{*}$. We have $$\frac{det\,h_{\widetilde{T}}(\omega)}{det\,h_{M^{(n)}}(\omega)}=\frac{det\, h_{\widetilde{T}}(\omega)}{K^{n}(\overline{\omega}, \overline{\omega})}\leq\frac{det\,h_{T}(\omega)}{K^{n}(\overline{\omega}, \overline{\omega})}<C_{1}, \omega\in \mathbb{D}. $$
For any $\omega\in \mathbb{D}$, by using the positive definiteness of invertible Gramian matrices $h_{T|_{\mathcal{M}}}(\omega),h_{(T^{*}|_{\mathcal{M}^{\perp}})^{*}}(\omega)$ and $K(\overline{\omega},\overline{\omega})$, we have $\frac{det\,h_{\widetilde{T}}(\omega)}{K^{n}(\overline{\omega}, \overline{\omega})}>0$.
It follows immediately that $\inf\big\{\frac{det\,h_{\widetilde{T}}(\omega)}{K^{n}(\overline{\omega}, \overline{\omega})},w\in\mathbb{D}\big\}>0$, which is clear from condition (3) in the theorem.

By hypothesis (2) in the Theorem, we have $T|_{\mathcal{M}}, (T^{*}|_{\mathcal{M}^{\perp}})^{*}$ are $M-$contractive operators, so is $\widetilde{T}$.
That means vector bundle $E_{\widetilde{T}}$ has the tensor structure $E_M\bigotimes V$ modulo unitary equivalence for some rank $n$ Hermitian holomorphic vector bundle $V$ over $\mathbb{D}$.
Let $\varphi(w):=\log\frac{det\,h_{\widetilde{T}}(w)}{det\,h_{M^{(n)}}(w)}$, $\omega\in \mathbb{D}$. Then $\varphi$ is a bounded subharmonic function acting on $\mathbb{D}$ and
$$trace\mathcal{K}_{M^{(n)}}(w)-trace\mathcal{K}_{\widetilde{T}}(w)=\frac{\partial ^{2}}{\partial w\partial\bar{w} }\varphi(w),\ \omega\in \mathbb{D}. $$
By Lemma \ref{9.4}, we obtain $\widetilde{T}\sim_{s}M^{(n)}$.

It is proved by C. K. Fong and C. L. Jiang in \cite{FJ} that $B_{1}(\Omega)\subset(SI)$.
Since $M\in B_{1}(\Omega)$, by Theorem \ref{12.13}, we know $K_0({\mathcal A}^{\prime}(M) \cong\mathbb{Z}$. For any positive integer $i$, it follows from Theorem \ref{9.14} in \cite{Jiang} that
\begin{equation}\label{9.142}
K_0({\mathcal A}^{\prime}(M^{(i)}))=K_0(M_{i}(\mathbb{C})\otimes{\mathcal A}^{\prime}(M))\cong\mathbb{Z}.
\end{equation}
By Theorem \ref{9.14} and equation (\ref{9.142}) above, it is easy to see that
\begin{equation}\label{9.133}
K_0({\mathcal A}^{\prime}(T|_{\mathcal{M}}\oplus (T^{*}|_{\mathcal{M}^{\perp}})^{*}\oplus M^{(n)}))=K_0({\mathcal A}^{\prime}(M^{(n)}))\cong\mathbb{Z},
\end{equation}
since $T|_{\mathcal{M}}\oplus (T^{*}|_{\mathcal{M}^{\perp}})^{*}\sim_{s}M^{(n)}$.
We know that each Cowen-Douglas operator can be written as a direct sum of a finite number of strongly irreducible Cowen-Douglas operators up to similarity.
Without loss of generality, we assume that
$$T|_{\mathcal{M}}=\bigoplus\limits_{i=1}^{t}A_{i}^{(m_{i})}, A_{i}\in B_{n_{i}}(\mathbb{D})\cap(SI), A_{i}\nsim_{s}A_{j}(i\neq j), \sum\limits_{i=1}^{t}m_{i}n_{i}=m, m_{i}>0;$$
$$(T^{*}|_{\mathcal{M}^{\perp}})^{*}=\bigoplus\limits_{i=1}^{s}B_{i}^{(l_{i})}, B_{i}\in B_{h_{i}}(\mathbb{D})\cap(SI), B_{i}\nsim_{s}B_{j}(i\neq j), \sum\limits_{i=1}^{s}h_{i}l_{i}=n, l_{i}>0.$$

Claim: $A_i\sim_s M^{(n_i)}$ and $B_j\sim_s M^{(h_j)}$ for any $1\leq i\leq t$ and $1\leq j\leq s$.

Otherwise, there exist $t'\leq t,\,s'\leq s$ (at least one of $t'$ and $s'$ is greater than 1) such that $A_{i}\nsim_{s}M^{(n_{i})}$ and $B_{i}\nsim_{s}M^{(h_{i})}$ when $1\leq i\leq t^{'}$ and $1\leq i\leq s^{'}$ (If we rearrange $\{A_i\}_{i=1}^t$ and $\{B_j\}_{j=1}^s$, this will always work).
Then $\bigoplus\limits_{i=1}^{t^{'}}A_{i}^{(m_{i})}\bigoplus M^{\big(\sum\limits_{i=1}^{t'}n_im_i\big)}$ is a finite (SI) decomposition, by Theorem \ref{9.52}, we know that
$$K_0\biggl({\mathcal A}^{\prime}\Big(\bigoplus\limits_{i=1}^{t^{'}}A_{i}^{(m_{i})}\bigoplus M^{\big(\sum\limits_{i=1}^{t'}n_im_i\big)}\Big)\biggl)\cong\mathbb{Z}^{t^{'}+1}. $$
Similarly, we have
$$K_0\biggl({\mathcal A}^{\prime}\Big(\bigoplus\limits_{j=1}^{s^{'}}B_{j}^{(l_{j})}\bigoplus M^{\big(\sum\limits_{j=1}^{s'}h_jl_j\big)}\Big)\biggl)\cong\mathbb{Z}^{s^{'}+1}. $$
Note that $$\bigoplus\limits_{i=1}^{t^{'}}A_{i}^{(m_{i})}\bigoplus\limits_{i=1}^{s^{'}}B_{i}^{(l_{i})}\bigoplus M^{(k)}\sim_{s}\bigoplus\limits_{i=1}^{t^{'}}A_{i}^{(m_{i})}\bigoplus M^{\big(\sum\limits_{i=1}^{t'}n_im_i\big)}\bigoplus\limits_{j=1}^{s^{'}}B_{j}^{(l_{j})}\bigoplus M^{\big(\sum\limits_{j=1}^{s'}h_jl_j\big)}\bigoplus M^{(k')}, $$
where $k=n+\sum\limits_{i=t^{'}+1}^{t}m_{i}n_{i}+\sum\limits_{i=s^{'}+1}^{s}h_{i}l_{i}$ and $k'=2\sum\limits_{i=t^{'}+1}^{t}m_{i}n_{i}+2\sum\limits_{i=s^{'}+1}^{s}h_{i}l_{i}$.
Hence, we have
$$K_0\biggl({\mathcal A}^{\prime}\biggl(\bigoplus\limits_{i=1}^{t}A_{i}^{(m_{i})}\bigoplus\limits_{i=1}^{s}B_{i}^{(l_{i})}\bigoplus M^{(n)}\biggl)\biggl)
\cong\mathbb{Z}^{r+1},$$
where $1\leq \max\{t^{'},s^{'}\}\leq r\leq t^{'}+s^{'}$. Combining with the equation (\ref{9.133}), we obtain that this is a contradiction. Hence, $t^{'}=s^{'}=0$ and
for any $1\leq i\leq t$, $1\leq j\leq s$, we have $A_{i}\sim_{s}M^{(n_{i})}, B_{j}\sim_{s}M^{(h_{j})}$. Since $(T^{*}|_{\mathcal{M}^{\perp}})^{*}\in B_{n-m}(\mathbb{D})$, we infer that  $T^{*}|_{\mathcal{M}^{\perp}}\sim_{s}M^{*(n-m)}$.
\end{proof}

For a Cowen-Douglas atom $M$ on Hilbert space $\mathcal{H}_K$ with reproducing kernel $K$, we know that there exists a sequence of polynomials $\{p_{l}(z, \overline{\omega})\}_{l}\subseteq \mathbb{C}[z, \overline{\omega}]$ such that $p_{l}(z, \overline{\omega})\rightarrow\frac{1}{K(z, \omega)}$ as $l\rightarrow \infty$, for all $z, \omega\in \mathbb{D}$.
If $T\in\mathcal{L}(\mathcal{H})$ is $M$-contractive, then $C:=WOT-\lim\limits_{l\rightarrow\infty}p_{l}(T^{*}, T)$ is positive. Let $D=C^{\frac{1}{2}}$ and $E=\overline{D\;\mathcal{H}}$.
Subsequently, there exist an isometry $V:\mathcal{H}\rightarrow K\subset\mathcal{H}_K\bigotimes E$ defined by $Vh=\sum\limits_{i=0}^\infty e_i(\cdot)\bigotimes De_i(T^*)^*h$
for an orthonormal basis $\{e_i\}_{i=0}^\infty$ of $\mathcal{H}_K$, $h\in\mathcal{H}$ and $K=ran\;V$.
It follows that for any $t(w)\in\ker(T-w)$, $Vt(w)=K(\cdot,\overline{w})\bigotimes Dt(w)$.
Let $T\in B_n(\mathbb{D})$. Choosing a holomorphic frame $\{\gamma_i\}_{i=1}^n$ of $E_T$, we have $E_T\sim_uE_M\bigotimes \mathcal{E}$, where a frame of bundle $\mathcal{E}$ is $\{D\gamma_i\}_{i=1}^n$.
Then there exists a metric of $E_T$ is
$h_T(w)=\big(\langle\gamma_j(w),\gamma_i(w)\rangle\big)_{n\times n}=K(\overline{w},\overline{w})I_{n\times n}\big(\langle D\gamma_j(w),D\gamma_i(w)\rangle\big)_{n\times n}$.
Thus, $det\,h_T(w)=K^n(\overline{w},\overline{w})det\,\big(\langle D\gamma_j(w),D\gamma_i(w)\rangle\big)_{n\times n}=K^n(\overline{w},\overline{w})det\,h_{\mathcal{E}}(w)$.
That means the hypothesis (3) of Theorem \ref{xiangsi} is $\inf\big\{det\,h_{\mathcal{E}_1}(w)det\,h_{\mathcal{E}_2}(w),w\in\mathbb{D}\big\}>0$,
where $E_{T|_{\mathcal{M}}}\sim_uE_M\bigotimes \mathcal{E}_1$ and $E_{(T^{*}|_{\mathcal{M}^{\perp}})^{*}}\sim_uE_M\bigotimes \mathcal{E}_2$.

It is well known that a necessary condition for two Cowen-Douglas operators of index one to be similar is that the ratio of the metrics of the operators is bounded and bounded from zero.
So condition (3) in Theorem \ref{xiangsi} describes a natural sufficient condition for operator similarity.
It has also been proved to be a necessary and sufficient condition when the operator is $n$-hypercontractive and similar to a weighted Bergman shift, $n\geq1$.
The following remark, we will check that it also naturally holds in many other cases.

\begin{rem}
Let $\mathcal{H}_K$ be a analytic functions Hilbert space with reproducing kernel $K:\Omega\times\Omega\rightarrow \mathbb{C}$. Suppose that $\mathcal{H}_K$
contains the constant functions and the polynomials are dense in $\mathcal{H}_K$, $K(z,w)\neq0$ for all $z,w\in\Omega$ and $\frac{1}{K}$ is a polynomial.
Let the multiplication operator $M_z$ is bounded on the analytic functional Hilbert space $\mathcal{H}_{K_1}$ with reproducing kernel $K_{1}(z,\omega)=\sum\limits_{n=0}^{\infty}b_{n}z^{n}\overline{\omega}^{n}$, $z,w\in\Omega$, $b_{n}>0$ and $\sigma(M_z)\subset\Omega$.
Suppose that $\frac{1}{K}(z,w)=\sum\limits_{n=0}^ka_nz^n\bar{w}^n$ and $a_0>0$.
If $\frac{1}{K}(M_z,M_z^*)\geq0$ (This is a contractive condition for a new model theorem, which is a generalization of the $n$-hypercontration), then we have
\begin{equation}\label{202303211}
\begin{cases}
a_0\geq0,\\
a_0+\sum\limits_{i=1}^ma_i\frac{b_{m-i}}{b_m}\geq0,\,\,1 \leq m \leq k,\\
a_0+\sum\limits_{i=1}^ka_i\frac{b_{l-i}}{b_l}\geq0,\,\,l> k.
\end{cases}
\end{equation}
Note that $\frac{K_{1}(\overline{\omega},\overline{\omega})}{K(\overline{\omega},\overline{\omega})}=\sum\limits_{m=0}^k\big(\sum\limits_{i=0}^ma_ib_{m-i}\big)|w|^{2m}+\sum\limits_{l=k+1}^\infty\big(\sum\limits_{i=0}^ka_ib_{l-i}\big)|w|^{2l}$.
Since equation (\ref{202303211}), we know that the coefficients of $|w|^{2n}$, $n\geq0$ are all non negative.
Then $\frac{K_{1}(\overline{\omega},\overline{\omega})}{K(\overline{\omega},\overline{\omega})}\geq a_0b_0>0$ for all $w\in\Omega$.

In general, let $M:=(M_z^*,\mathcal{H}_K)$ be a Cowen-Douglas atom and $(M_z^*,\mathcal{H}_{K_1})$ is $M-$contractive.
Then there exist $\{p_{l}\}_{l\geq0}$ such that $p_{l}(z, \overline{\omega})=\sum\limits_{n=0}^{k_{l}}a_{n}^{l}z^{n}\overline{\omega}^{n}$ and $WOT-\lim\limits_{l\rightarrow\infty}p_{l}(M_z,M_z^{*})\geq0$, where $M_z$ is on space $\mathcal{H}_{K_1}$.
A routine calculation shows that for any positive integer $l$, $p_l(M_z,M_z^{*})=diag(c_0^l,c_1^l,c_2^l,\cdots)$,
where
$$c_i^l=
\begin{cases}
a_0^l+\sum\limits_{j=1}^ia_j^l\frac{b_{i-j}}{b_i},\,\,1 \leq i \leq k_l,\\
a_0^l+\sum\limits_{j=1}^{k_l}a_j^l\frac{b_{i-j}}{b_i},\,\,i >k_l.
\end{cases}$$
That means $\lim\limits_{l\rightarrow\infty}a_0^l\geq0$ and $\lim\limits_{l\rightarrow\infty}c_i^l\geq0$ for any $i\geq1$.
Note that
$$\begin{array}{lll}
\frac{K_{1}(\overline{\omega},\overline{\omega})}{K(\overline{\omega},\overline{\omega})}
&=&\lim\limits_{l\rightarrow\infty}p_{l}(\omega, \overline{\omega})K_{1}(\overline{\omega},\overline{\omega})\\
&=&\lim\limits_{l\rightarrow\infty}(\sum\limits_{n=0}^{k^{(l)}}a_{n}^{l}|\omega|^{2n})(\sum\limits_{n=0}^{\infty}b_{n}|\omega|^{2n})\\
&=&\lim\limits_{l\rightarrow\infty}[a_{0}^{l}b_{0}+\sum\limits_{i=1}^{k^{(l)}}c_i^lb_i|\omega|^{2i}+\sum\limits_{j=k^{(l)}+1}^{\infty}c_i^lb_j|\omega|^{2j}]\\
&=&\lim\limits_{l\rightarrow\infty}a_{0}^{l}b_{0}+\lim\limits_{l\rightarrow\infty}\sum\limits_{i=1}^{k^{(l)}}c_i^lb_i|\omega|^{2i}+\lim\limits_{l\rightarrow\infty}\sum\limits_{j=k^{(l)}+1}^{\infty}c_i^lb_j|\omega|^{2j}\\
&\geq&\lim\limits_{l\rightarrow\infty}a_{0}^{l}b_{0}.
\end{array}$$
If $\lim\limits_{l\rightarrow\infty}a_0^l>0$, then $\inf\big\{\frac{K_{1}(\overline{\omega},\overline{\omega})}{K(\overline{\omega},\overline{\omega})}, w\in\mathbb{D}\big\}>0$.

The above examples are discussed in order to state the boundedness of the ratio of metrics corresponding to line bundles. Indeed, many bundles with rank two contain reducible and irreducible cases, which can be generated from line bundles.
Given a Hermitian holomorphic line bundle $\mathcal{E}$ over a bounded domain $\Omega\subset\mathbb{C}$.
Let $\gamma$ be a holomorphic frame of $\mathcal{E}$ and $h$ be a metric of $\mathcal{E}$ corresponding to $\gamma$.
Since $\gamma(w)$ and $\gamma'(w)$ are linearly independent for any $w\in\Omega$, we know that $\{\gamma,\gamma'\}$ is a frame of jet bundle $\mathcal{J}_2(\mathcal{E})$.
Then the form of a Gram metric matrix of the bundle $\mathcal{J}_2(\mathcal{E})$ is $\mathcal{J}_2(h)=\left(\begin{matrix} h & \partial h\\
\overline{\partial} h &\partial\overline{\partial} h \\
\end{matrix}\right)$ and $det\,\mathcal{J}_2(h)=h^2(-\mathcal{K})$, where $\mathcal{K}$ is the curvature of $\mathcal{E}$.
Let $K(z,w)=\langle \gamma(\overline{w}),\gamma(\overline{z})\rangle$ for $z,w\in\Omega$.
If $K$ is infinitely divisible, then there exists a domain $\Omega_0\subseteq\Omega$ such that the curvature
matrix $\mathcal{K}=\frac{\partial^2}{\partial w\overline{\partial}w}\log K$ is a positive-definite function on $\Omega_0$ due to S. Biswas, G. Misra and the third author in \cite{BKM}.
Thus, $det\,\mathcal{J}_2(h)$ is positive definite over $\Omega_0$.
For kernel $K$, there exists a unique Hilbert space $\mathcal{H}_K$ with reproducing kernel $K$.
Then we may find operators $S$ and $T_1$ satisfy $T:=\left(\begin{matrix} M_z^* & S\\
0 & T_1 \\
\end{matrix}\right)$ belongs to $\mathcal{F}B_2(\Omega_0)$, i.e. $T_1\in B_1(\Omega_0)$ and $M_z^*S=ST_1$, introduced in \cite{JJKM}.
Subsequently, there exists a metric $h_T$ of bundle $E_T$ such that $det\,h_T=det\,\mathcal{J}_2(h)+hh_1=hh_1-h^2\mathcal{K}$ is also a positive kernel over $\Omega_0$, where $h_1$ is a metric of $E_{T_1}$.
If the adjoint of multiplication operator $\widetilde{M}_z$ determined by kernel $det\,h_T$ is $n$-hypercontractive and $\partial^i\overline{\partial}^jdet\,h_T(w_0)=0$ when $i\neq j$ for some $w_0\in\Omega_0\subset\mathbb{D}$ and positive integer $n$, then $\inf\{\frac{det\,h_T(w)}{K^{(2n)}(\overline{w},\overline{w})},w\in\Omega_0\}>0$, where $K^{(2n)}(\overline{w},\overline{w})=\frac{1}{(1-|w|^2)^{2n}}$.

In fact, we just need $\inf\{\frac{h(w)h_1(w)}{K^{(2n)}(\overline{w},\overline{w})},w\in\Omega_0\}$ or $\inf\{\frac{-h^2(w)\mathcal{K}(w)}{K^{(2n)}(\overline{w},\overline{w})},w\in\Omega_0\}$ to be greater than zero.
\end{rem}


In Theorem \ref{xiangsi}, when $T\in B_{2}(\mathbb{D})$ and $T|_{\mathcal{M}}\in B_{1}(\mathbb{D})$, we concluded that $T|_{\mathcal{M}}\sim_{s}(T^{*}|_{\mathcal{M}^{\perp}})^{*}\sim_{s}M_{z}^{*}$.
In general, for any
$T=\left(\begin{matrix} T_{1} & T_{12}\\
0&T_{2}\\
\end{matrix}\right )\in B_{2}(\mathbb{D}).$ Suppose that $T\sim_{s}M_{z}^{*(2)}$ for the multiplication operator $M_{z}$ on some analytic functional Hilbert space.
If there exist a diagonal invertible operator intertwining $T$ and $M_{z}^{*(2)}$, by a simple calculation, we verify that $T_{1}\sim_{s}T_{2}\sim_{s}M_{z}^{*}$ and $T_{12}=0$.
If the invertible operator $X$ intertwining $T$ and $M_{z}^{*(2)}$ is upper triangular,
i.e., $X=\left ( \begin{matrix} X_{11} & X_{12}\\
0 & X_{22}\\
\end{matrix}\right )$. By calculation, we obtain $T_{1}\sim_{s}T_{2}\sim_{s}M_{z}^{*}$ and $T_{12}=\tau_{T_{1},T_{2}}(X_{11}^{-1}X_{12})=T_1X_{11}^{-1}X_{12}-X_{11}^{-1}X_{12}T_2$.
This operator class is introduced in \cite{HJJX} and is shown to be strongly reducible.
In general, the invertible operator $X$, which is neither diagonal nor upper triangular, will be more complicated to calculate. The above two cases also show that Theorem \ref{xiangsi} is not-trivial.
\begin{cor}\label{9.7}
Let $T\in B_{1}(\mathbb{D})$. Let $M$ be a Cowen-Douglas atom on some analytic functional Hilbert space $\mathcal{H}_{K}$ with reproducing kernel $K$.
Suppose that there exist a metric $h_T$ of $E_T$ and a constant $C_{1}>0$ such that $\frac{h_{T}(\omega)}{K(\overline{\omega}, \overline{\omega})}<C_{1}$ for any $\omega\in \mathbb{D}$.
If there exists $\mathcal{M}\in Lat(T)$ such that
\begin{enumerate}
  \item [(1)] $\sigma(T|_{\mathcal{M}})\cap \mathbb{D}=\emptyset$;
  \item [(2)] $(T^{*}|_{\mathcal{M}^{\perp}})^{*}$ is $M$-contractive;
  \item [(3)] $\inf\big\{\frac{h_{(T^{*}|_{\mathcal{M}^{\perp}})^{*}}(\omega)}{K(\overline{\omega}, \overline{\omega})},w\in\mathbb{D}\big\}>0$,
\end{enumerate}
then $T^{*}|_{\mathcal{M}^{\perp}}\sim_{s}M^{*}$.
\end{cor}
\begin{proof}
Since $\mathcal{M}\in Lat(T)$, $T$ has the following form under the space decomposition of $\mathcal{M}\oplus\mathcal{M}^{\perp}$:
$$T=\left(\begin{matrix} T|_{\mathcal{M}} & P_{\mathcal{M}}T|_{\mathcal{M}^{\perp}}\\
0 & (T^{*}|_{\mathcal{M}^{\perp}})^{*}\\
\end{matrix}\right ).$$
Let $e=e_{1}+e_{2}$ be a non-zero holomorphic section of vector bundle $E_{T}$. We have for each $w\in\mathbb{D}$
$$\left(\begin{matrix} T|_{\mathcal{M}}-\omega & P_{\mathcal{M}}T|_{\mathcal{M}^{\perp}}\\
0 & (T^{*}|_{\mathcal{M}^{\perp}})^{*}-\omega\\
\end{matrix}\right )\left ( \begin{matrix} e_{1}(\omega)\\
e_{2}(\omega)\\
\end{matrix}\right)=\left ( \begin{matrix} (T|_{\mathcal{M}}-\omega)e_{1}(\omega)+P_{\mathcal{M}}T|_{\mathcal{M}^{\perp}}e_{2}(\omega)\\
((T^{*}|_{\mathcal{M}^{\perp}})^{*}-\omega)e_{2}(\omega)\\
\end{matrix}\right)=\left ( \begin{matrix} 0\\
0\\
\end{matrix}\right).$$
It follows from $\sigma(T|_{\mathcal{M}})\cap \mathbb{D}=\emptyset$ that $e_{2}(\omega)\in \ker((T^{*}|_{\mathcal{M}})^{*}-\omega)$ and $e_{1}(\omega)=-(T|_{\mathcal{M}}-\omega)^{-1}P_{\mathcal{M}}T|_{\mathcal{M}^{\perp}}e_{2}(\omega)$ for every $w\in\mathbb{D}$.
We know that $e_{2}(\omega)$ is not zero holomorphic, otherwise $e(\omega)$ is zero.
That means $\dim \mbox{ker} ((T^{*}|_{\mathcal{M}^{\perp}})^{*}-\omega)=1$ for every $w\in\mathbb{D}$.
Further, it is obtained from $\bigvee \limits_{\omega{\in}{\mathbb{D}}} \ker(T-\omega)=\mathcal{M}\oplus\mathcal{M}^{\perp}$ that $\bigvee \limits_{\omega{\in}{\mathbb{D}}} \ker((T^{*}|_{\mathcal{M}})^{*}-\omega)=\mathcal{M}^{\perp}$.
By $\mbox{ran}(T-\omega)=\mathcal{M}\oplus\mathcal{M}^{\perp}$,
that is, for a fixed but arbitrary $y_{1}+y_{2}\in \mathcal{M}\oplus\mathcal{M}^{\perp}$, there exists $x_{1}+x_{2}\in \mathcal{M}\oplus\mathcal{M}^{\perp}$, such that $$\left(\begin{matrix} T|_{\mathcal{M}}-\omega & P_{\mathcal{M}}T|_{\mathcal{M}^{\perp}}\\
0 & (T^{*}|_{\mathcal{M}^{\perp}})^{*}-\omega\\
\end{matrix}\right )\left ( \begin{matrix} x_{1}\\
x_{2}\\
\end{matrix}\right)=\left ( \begin{matrix} (T|_{\mathcal{M}}-\omega)x_{1}+P_{\mathcal{M}}T|_{\mathcal{M}^{\perp}}x_{2}\\
((T^{*}|_{\mathcal{M}^{\perp}})^{*}-\omega)x_{2}\\
\end{matrix}\right)=\left ( \begin{matrix} y_{1}\\
y_{2}\\
\end{matrix}\right),\,w\in\mathbb{D}.$$ Then~$\mbox{ran}((T^{*}|_{\mathcal{M}^{\perp}})^{*}-\omega)=\mathcal{M}^{\perp}$, $w\in\mathbb{D}$.
Hence, $(T^{*}|_{\mathcal{M}^{\perp}})^{*}\in B_{1}(\mathbb{D})$.

Note that there exist metrics $h_T$ of $E_T$ and $h_{(T^{*}|_{\mathcal{M}^{\perp}})^{*}}$ of $E_{(T^{*}|_{\mathcal{M}^{\perp}})^{*}}$ are
$$\begin{array}{lll}
h_{T}(\omega)
&=&\|e_{1}(\omega)\|^{2}+\|e_{2}(\omega)\|^{2}\\
&=&\|e_{1}(\omega)\|^{2}+h_{(T^{*}|_{\mathcal{M}^{\perp}})^{*}}(\omega)\\
&\geq&h_{(T^{*}|_{\mathcal{M}^{\perp}})^{*}}(\omega),\ \omega{\in}{\mathbb{D}}.
\end{array}$$
From $\frac{h_{T}(\omega)}{K(\overline{\omega},\overline{\omega})}<C_{1}$, we have ~$\frac{h_{(T^{*}|_{\mathcal{M}^{\perp}})^{*}}(\omega)}{K(\overline{\omega},\overline{\omega})}<C_{1}$, $\omega{\in}{\mathbb{D}}$.
By hypotheses (2) and (3), there exists $c_{1}>0$ such that
~$\frac{h_{(T^{*}|_{\mathcal{M}^{\perp}})^{*}}(\omega)}{K(\overline{\omega},\overline{\omega})}>c_{1}$ and
$$\mathcal{K}_{M}(\omega)-\mathcal{K}_{(T^{*}|_{\mathcal{M}^{\perp}})^{*}}(\omega)=\frac{\partial ^{2}}{\partial \omega\partial\bar{\omega} }\varphi(\omega), $$
where $\varphi(\omega):=\log\frac{h_{(T^{*}|_{\mathcal{M}})^{*}}(\omega)}{(1-|\omega|^{2})^{-n}}$ is a bounded subharmonic function on $\mathbb{D}$.
Hence, by Lemma \ref{9.4}, we infer that $T^{*}|_{\mathcal{M}^{\perp}}\sim_{s}M_{z}.$
\end{proof}
The following corollary is a direct consequence of Corollary \ref{9.7}:
\begin{cor}
Let $M$ be a Cowen-Douglas atom on Hilbert space $\mathcal{H}_{K}$ with reproducing kernel $K$.
Let $\mathcal{H}$ be a $\mathbb{C}$-valued Hilbert space over $\mathbb{D}$, determined by the kernel function $\widetilde{K}$.
Suppose that $M_z^*\in B_{1}(\mathbb{D})$.
If there exist $C_{1}>0$ and $I_{i}\in Lat(M_{z})$, $i=1, 2$ such that:
\begin{enumerate}
  \item [(1)] $(M_{I_{i}})^{*}$ is $M-$contractive;
  \item [(2)] $\frac{\widetilde{K}(w, w)}{K(w, w)}\leq C_{1}$ and $\widetilde{K}(\cdot, w)|_{I_{i}}\neq0$ for any $w\in \mathbb{D}$;
  \item [(3)] $\inf\big\{\frac{\|\widetilde{K}(\cdot, w)|_{I_{i}}\|^2}{K(w, w)},w\in\mathbb{D}\big\}>0$,
\end{enumerate}
then $M_{I_{1}}\sim_{s}M_{I_{2}}$.
\end{cor}
\begin{proof}
Since $I_{i}\in Lat(M_{z})$, $M_{z}$ and $M_{z}^{*}$ have the following forms under the space decomposition of $I_{i}^{\perp}\oplus I_{i}$:
$$M_{z}=\left(\begin{matrix} P_{I_{i}^{\perp}}M_{z}|_{I_{i}^{\perp}} & 0\\
P_{I_{i}}M_{z}|_{I_{i}^{\perp}} & M_{I_{i}}\\
\end{matrix}\right),\ M_{z}^{*}=\left(\begin{matrix}
M_{z}^{*}|_{I_{i}^{\perp}} & (P_{I_{i}}M_{z}|_{I_{i}^{\perp}})^{*}\\
0 & (M_{I_{i}})^{*}\\
\end{matrix}\right),\ i=1, 2.$$
Let $e$ be a nonzero holomorphic section of $E_{M_z^*}$ that satisfies $\|e(\omega)\|^{2}=\widetilde{K}(\overline{\omega}, \overline{\omega})$, $w\in\mathbb{D}$.
It follows from $M_{z}^{*}\in B_{1}(\mathbb{D})$ that $\mbox{ran}((M_{I_{i}})^{*}-\omega)=I_{i}$, $e_i(\omega):=e(\omega)|_{I_{i}}\in\ker((M_{I_{i}})^{*}-\omega)$ and $\bigvee \limits_{\omega{\in}{\mathbb{D}}} e_i(\omega)=I_{i}$ for $i=1,2$.
Note that $\widetilde{K}(\cdot, w)|_{I_{i}}\neq0$ for any $w\in \mathbb{D}$, we have $(M_{I_{i}})^{*}\in B_{1}(\mathbb{D})$ and $e_i$ is a nonzero holomorphic section of $E_{(M_{I_{i}})^{*}}$, $i=1,2$.
Since $\frac{\|e_{i}(\omega)\|^{2}}{K(\overline{\omega}, \overline{\omega})}\leq\frac{\|e(\omega)\|^{2}}{K(\overline{\omega}, \overline{\omega})}\leq C_{1}$ for any
$w\in\mathbb{D}$ and hypothesis (3), there exists $c_{1}>0$ such that $c_1\leq\frac{h_{(M|_{I_{i}})^{*}}(\omega)}{K(\overline{\omega}, \overline{\omega})}\leq C_{1}$,
where $h_{(M|_{I_{i}})^{*}}(\omega):=\|e_{i}(\omega)\|^{2}$ is a metric of $E_{(M|_{I_{i}})^{*}}$ for $i=1, 2$.
That means $$\mathcal{K}_{M}(\omega)-\mathcal{K}_{(M_{I_{i}})^{*}}(\omega)=\frac{\partial ^{2}}{\partial \omega\partial\bar{\omega} }\varphi_i(\omega), $$ where $\varphi_i(\omega):=\log\frac{\|e_{i}(\omega)\|^{2}}{K(\overline{\omega}, \overline{\omega})}$ is a bounded subharmonic function, $i=1, 2$.
From Lemma \ref{9.4} and the condition of $(M_{I_{i}})^{*}$ is $M-$contractive, we obtain $M_{I_{i}}\sim_{s}M^{*}, i=1, 2$. Hence, $M_{I_{1}}\sim_{s}M_{I_{2}}$.
\end{proof}

In the following, we will return to the open problem raised by K. Zhu: For the multiplication operator on the Bergman space and $I, J\in Lat(M_{z})$, when does $M_{I}\sim_{s}M_{J}$ hold?
For this problem, a sufficient condition is obtained.
\begin{cor}\label{11.9}
Let $M$ be a Cowen-Douglas atom on Hilbert space $\mathcal{H}_{K}$ with reproducing kernel $K$.
Let $M_{z}$ be the multiplication operator on the Bergman space.
Suppose that there exists $C>0$ such that
$(1-|\omega|^{2})^{2}K(\overline{\omega}, \overline{\omega})\geq C$ for any $\omega\in \mathbb{D}$. If there exist $I_{i}\in Lat(M_{z})$, $i=1, 2$ such that:
\begin{enumerate}
  \item [(1)] $(M_{I_{i}})^{*}\in B_{1}(\mathbb{D})$ and $(M_{I_{i}})^{*}$ are $M-$contractive;
  \item [(2)] $\inf\big\{\frac{h_{(M_{I_{i}})^{*}}(\omega)}{K(\overline{\omega}, \overline{\omega})},w\in\mathbb{D}\big\}>0$,
\end{enumerate}
then $M_{I_{1}}\sim_{s}M_{I_{2}}$.
\end{cor}

\begin{ex}\label{1.28}
Let $M_{z}$ be the multiplication operator on the Bergman space $\mathcal{H}$. Let $e_{n}=\sqrt{n+1}z^{n}$, $\{e_{n}\}_{n=0}^{\infty}$ is an orthonormal basis of Bergman space.
Set $\mathcal{M}:=span\{e_{1}, e_{2}, \cdots\}$. Note taht $$M_{z}\mathcal{M}=span\{e_{2}, e_{3}, \cdots\}\subseteq \mathcal{M},$$ then $\mathcal{M}\in Lat(M_{z})$ and $\{e_{n}\}_{n=1}^{\infty}$ is an orthonormal basis of $\mathcal{M}$.
It is easy to see that $M_{z}$ and $M_{z}|_{\mathcal{M}}$ are unilateral shifts with weight sequences $\big\{\sqrt{\frac{n+1}{n+2}}\big\}_{n=0}^{\infty}$ and $\big\{\sqrt{\frac{n+2}{n+3}}\big\}_{n=0}^{\infty}$ respect to $\{e_{n}\}_{n=0}^{\infty}$ and $\{e_{n}\}_{n=1}^{\infty}$, respectively.
On the one hand, by Theorem \ref{alger1}, we know $M_{z}\sim_{s}M_{z}|_{\mathcal{M}}$.
On the other hand, we know that $M_{z}|_{\mathcal{M}}$ is unitarily equivalent to the multiplication operator on some Hilbert space with reproducing kernel $K_{M_{z}|_{\mathcal{M}}}(z,\omega)=\frac{2-z\overline{\omega}}{(1-z\overline{\omega})^{2}}, z,\omega\in\mathbb{D}$.
Note that $M_{z}|_{\mathcal{M}}$ is not $2$-hypercontration, $\frac{1}{2}<\frac{1}{2-|\omega|^{2}}<1$ for all $w\in\mathbb{D}$ and
$$\mathcal{K}_{(M_{z}|_{\mathcal{M}})^{*}}(\omega)-\mathcal{K}_{M_{z}^{*}}(\omega)=\frac{\partial ^{2}}{\partial w\partial\bar{w} }\log\frac{1}{2-|\omega|^{2}}=\frac{2}{(1-|\omega|^{2})^{2}}>0,\ \omega\in \mathbb{D}.$$

\end{ex}
We know $M_{z}^{*}$ is 2-hypercontractive, $(M_{\mathcal{M}})^{*}$ is contractive. Thus, Theorem \ref{DKS} is invalid for Example \ref{1.28}. In our main theorem, the class of model is expanded. It is not limited to some homogeneous operators, and involves many operators like $(M_{z}|_{\mathcal{M}})^{*}$. So the above example can be verified by Theorem \ref{xiangsi}.
Next, we extend the conclusion of the Corollary \ref{11.9} to the finite direct sum of the multiplication operator on the Bergman space.
\begin{cor}\label{10.211}
Let $M$ be a Cowen-Douglas atom on Hilbert space $\mathcal{H}_{K}$ with reproducing kernel $K$.
Let $M_{z}$ be the multiplication operator on the Bergman space.
Suppose that there exists $C>0$ such that
$(1-|\omega|^{2})^{2}K(\overline{\omega}, \overline{\omega})\geq C$ for any $\omega\in \mathbb{D}$. If $I_{i}\in Lat(M_{z}^{(n)}),\,i=1, 2$ satisfy the
following conditions:
\begin{enumerate}
  \item [(1)] $M_{z}^{*(n)}\mid_{I_{i}^{\perp}}\in B_{m}(\mathbb{D}),\,m<n$;
  \item [(2)] $M_{z}^{*(n)}\mid_{I_{i}^{\perp}}$ and $(M_{z}^{(n)}\mid_{I_{i}})^{*}$ are $M$-contractive;
  \item [(3)] there exists a metric $h_{T_{i}}$ of $E_{T_{i}}$ such that $\inf\{\frac{det\,h_{T_{i}}(\omega)}{K^{n}(\overline{\omega}, \overline{\omega})},\omega\in\mathbb{D}\}>0$, where $T_{i}=M_{z}^{*(n)}|_{I_{i}^{\perp}}\oplus(M_{z}^{(n)}|_{I_{i}})^{*}$,
\end{enumerate}
then $M_{z}^{(n)}\mid_{I_{1}}\sim_{s}M_{z}^{(n)}\mid_{I_{2}}$.
\end{cor}

\begin{ex}
Let $M$ be a Cowen-Douglas atom on Hilbert space $\mathcal{H}_{K}$ with reproducing kernel $K$.
Let $T=\left(\begin{matrix}
M & XM-MX\\
0 & M \\
\end{matrix}\right)$ for some bounded linear operator $X$ on $\mathcal{H}_{K}$.
Set $t(w):=K(\cdot, \overline{w})$, $w\in\mathbb{D}$.
Then $t$ is a non-zero holomorphic section of $E_{M}$. It is easy to verify that $E_{T}=span\{t, X(t)+t\}$.
Then we have
$$h_{T}(\omega)=\left(\begin{matrix}\|t(\omega)\|^{2} & \langle X(t(\omega)),t(\omega)\rangle\\
\langle t(\omega),X(t(\omega))\rangle & \|X(t(\omega))\|^{2}+\|t(\omega)\|^{2}\\
\end{matrix}\right )$$
and
$$det\,h_{T}(\omega)=\|t(\omega)\|^{4}+\|t(\omega)\|^{2} \|X(t(\omega))\|^{2}-|\langle t(\omega),X(t(\omega))\rangle|^{2},\,w\in\mathbb{D}.$$
By using the Cauchy-Schwarz inequality, we obtain
$1\leq\frac{det\,h_{T}(\omega)}{K^{2}(\overline{\omega}, \overline{\omega})}\leq1+\|X\|^{2}$ for any $w\in\mathbb{D}$.
Note that $\mathcal{H}_{K}\in Lat(T)$ and $(T|_{\mathcal{H}_{K}})^{*}\sim_{s}M$.
\end{ex}

\begin{ex}
Let $M_{z}$ be the multiplication operator on Hardy space.
Let $T=\left(\begin{matrix}
T_{1} & S\\
0 & T_{1}\\
\end{matrix}\right)$, where $T_{1}$ is the adjoint of the multiplication operator on the Hilbert space $\mathcal{H}_1$, determined by kernel function $K_1(z,w)=\sum\limits_{n=0}^\infty\frac{2(n+1)}{n+2}z^n\overline{w}^n$ for $z,w\in\mathbb{D}$ and $S=diag(d_{0}, d_{1}, \cdots)$ respect to some orthonormal basis of $\mathcal{H}_1$.
Suppose that $\{d_{i}\}_{i=0}^{\infty}$ is a positive convergent sequence.
Choosing $t_1(w):=K_1(\cdot,\overline{w})$, we know that $t_{1}$ is a non-vanishing holomorphic section of $E_{T_{1}}$.
It is easy to see that there exists a holomorphic frame of Hermitian bundle $E_T$ in the form $\gamma_{1}=\left(\begin{matrix} t_{1} \\
0\\
\end{matrix}\right), \gamma_{2}=\left(\begin{matrix} t_{2} \\
t_{1}\\
\end{matrix}\right)$ for some holomorphic vector $t_2$.
By a routine computation, we obtain $t_{2}(\omega)=-\sum\limits_{i=1}^{\infty}\sqrt{\frac{2(i+1)}{i+2}}\Big(\sum\limits_{j=0}^{i-1}d_{j}\Big)z^{i}\omega^{i-1}.$
Note that $$\frac{\sqrt{2}}{2}\leq\frac{\|t_{1}(\omega)\|^{2}}{K(\overline{\omega},\overline{\omega})}=\frac{2(1-|\omega|^{2})\ln(1-|\omega|^{2})+2|\omega|^{2}}{|\omega|^{4}}\leq1$$ and
$$\|t_{2}(\omega)\|^{2}
=\sum\limits_{i=1}^{\infty}\frac{2(i+1)}{i+2}(\sum\limits_{j=0}^{i-1}d_{j})^{2}|\omega|^{2(i-1)}
\leq\Big(\sum\limits_{j=0}^{\infty}d_{j}\Big)^{2}\|t_{1}(\omega)\|^{2}$$
for all $\omega\in\mathbb{D}$.
By using Cauchy-Schwarz inequality, we have
$$\frac{1}{2}\leq\frac{\|t_{1}(\omega)\|^{4}}{K^{2}(\overline{\omega},\overline{\omega})}\leq\frac{det\,h_{T}(\omega)}{det\,h_{M_{z}^{*(2)}}(\omega)}\leq\frac{\|t_{1}(\omega)\|^{2}}{K(\overline{\omega},\overline{\omega})}\biggl(\frac{\|t_{1}(\omega)\|^{2}}{K(\overline{\omega},\overline{\omega})}+\frac{\|t_{2}(\omega)\|^{2}}{\|t_{1}(\omega)\|^{2}}\frac{\|t_{1}(\omega)\|^{2}}{K(\overline{\omega},\overline{\omega})}\biggl).$$
From inequation above and the convergence of the sequence $\{d_{i}\}_{i=0}^{\infty}$, we know $\frac{det\,h_{T}(\omega)}{det\,h_{M_{z}^{*(2)}}(\omega)}$ is bounded and bounded below from zero  for any $\omega\in \mathbb{D}$. On the other hand, since Theorem \ref{alger1}, we obtain $T^*|_{\mathcal{H}_1}$ is similar to $M_{z}$.
\end{ex}

\section{Reducibility of operators}

In this section, we will discuss the effect of the $n$-hypercontractivity of an operator on the structure of the operator, $n\geq1$. When the contractivity of a part of an operator reaches a certain degree, it can be inferred that the whole operator is reducible.

\subsection{Contractibility and reducibility of operators.}
In this subsection, we mainly study the properties of contractive operators.

\begin{lem}\label{yasuo1}
Let $T\in\mathcal{L}(\mathcal{H})$. Suppose that $T=(T_{ij})_{i, j=1}^{n}$ with respect to some decomposition $\mathcal{H}=\bigoplus\limits_{i=1}^{n}\mathcal{H}_{i}$.
If $T$ is contractive, then $T_{ij}$, $1\leq i,j\leq n$ are contractive operators.
\end{lem}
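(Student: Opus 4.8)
The plan is to realize each block $T_{i,j}$ as a compression of $T$ and then to use the hypothesis $\|T\|\le 1$ directly. Write $J_j\colon\mathcal{H}_j\to\mathcal{H}$ for the canonical isometric embedding that sends a vector into the $j$-th summand of $\bigoplus_{i=1}^n\mathcal{H}_i$ and zero into the remaining coordinates, and $P_i\colon\mathcal{H}\to\mathcal{H}_i$ for the orthogonal projection onto $\mathcal{H}_i$. With respect to the block representation $T=(T_{i,j})$ one has $T_{i,j}=P_i T J_j$, since applying $T$ to a vector supported in the $j$-th summand and reading off the $i$-th coordinate reproduces exactly the block $T_{i,j}$.

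First I would fix a pair of indices $i,j$ and take an arbitrary $x\in\mathcal{H}_j$. Set $\widetilde{x}=J_j x\in\mathcal{H}$, so that $\|\widetilde{x}\|=\|x\|$ because $J_j$ is an isometry. The vector $T\widetilde{x}$ decomposes along $\bigoplus_{k=1}^n\mathcal{H}_k$ with its $k$-th coordinate equal to $T_{k,j}x$; by the orthogonality of the summands this gives the Pythagorean identity $\|T\widetilde{x}\|^2=\sum_{k=1}^n\|T_{k,j}x\|^2$. In particular each individual term is dominated by the full sum, so $\|T_{i,j}x\|^2\le\|T\widetilde{x}\|^2$.

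Next I would invoke contractivity of $T$: since $\|T\|\le 1$, we have $\|T\widetilde{x}\|\le\|\widetilde{x}\|=\|x\|$. Combining the two inequalities yields $\|T_{i,j}x\|\le\|x\|$ for every $x\in\mathcal{H}_j$, which is precisely the assertion that $T_{i,j}$ is a contraction. As $i,j$ were arbitrary, all blocks are contractions, completing the argument.

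There is no substantive obstacle here; the only point requiring a little care is the identification $T_{i,j}=P_i T J_j$ together with the observation that $J_j$ is isometric while $P_i$ has norm at most one, which is what makes the clean estimate $\|T_{i,j}\|=\|P_i T J_j\|\le\|P_i\|\,\|T\|\,\|J_j\|\le 1$ go through. The Pythagorean step is the mild computational heart of the proof, and it works exactly because the decomposition $\mathcal{H}=\bigoplus_{i=1}^n\mathcal{H}_i$ is orthogonal.
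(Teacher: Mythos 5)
Your argument is correct and is essentially the paper's own proof: the paper likewise takes $y_j = J_j x_j$ (written as a column vector supported in the $j$-th slot), uses the Pythagorean identity $\|Ty_j\|^2 = \sum_{i}\|T_{i,j}x_j\|^2 \le \|x_j\|^2$, and concludes each $\|T_{i,j}\|\le 1$. If anything, your version is slightly cleaner, since you bound each term of the sum directly rather than passing through the paper's intermediate claim $\sum_i\|T_{i,j}\|^2\le 1$.
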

\begin{proof}
If $T$ is contractive, then for any $x\in \mathcal{H}$, we have $\|Tx\|\leq \|x\|$. Setting $$y_{j}=(\underbrace{0, \cdots, 0 }_{\text{j-1}}, x_{j}, \underbrace{0, \cdots, 0 }_{\text{n-j}})^{T}\in \bigoplus\limits_{i=1}^{n}\mathcal{H}_{i}. $$ It follows that
$$\|Ty_{j}\|^{2}=\sum\limits_{i=1}^{n}\|T_{ij}x_{j}\|^{2}\leq\|x_{j}\|^{2}.$$
By the property of norm, we obtain $$\sum\limits_{i=1}^{n}\|T_{ij}\|^{2}\leq1, \|T_{ij}\|\leq1.$$
Therefore, $T_{ij}$, $1\leq i,j\leq n$ are contractive operators.
\end{proof}
\begin{rem}\label{zhu9.9}
In Lemma \ref{yasuo1}, a similar method can be obtained $\sum\limits_{j=1}^{n}\|T_{ij}\|^{2}\leq1.$ If there exists $\|T_{i_{0}j_{0}}\|=1$, then $\|T_{i, j_{0}}\|=0$ and $\|T_{i_{0}, j}\|=0$, that is,
$T_{i, j_{0}}=0$ and $T_{i_{0}, j}=0$ for $i\neq i_{0}$ and $j\neq j_{0}$.
\end{rem}
\begin{prop}\label{yl9.82}
Let $T\in\mathcal{L}(\mathcal{H})$ be a contractive operator. Suppose that $T=(T_{ij})_{i, j=1}^{n}$ with respect to some decomposition $\mathcal{H}=\bigoplus\limits_{i=1}^{n}\mathcal{H}_{i}$, $n>1$.
If there exists $1\leq i_{0}\leq n$ such that $\|T_{i_{0}i_{0}}\|=1$, then $T$ is reducible.
\end{prop}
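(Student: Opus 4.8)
The plan is to show that the single subspace $\mathcal{H}_{i_0}$ is itself a nontrivial reducing subspace for $T$; the whole argument rests on the block-vanishing phenomenon already recorded in Remark \ref{zhu9.9}, so no new estimate is really needed. First I would invoke Lemma \ref{yasuo1}: since $T$ is a contraction, every block satisfies $\|T_{i,j}\|\leq 1$, and the row and column bounds $\sum_{j=1}^{n}\|T_{i,j}\|^{2}\leq 1$ and $\sum_{i=1}^{n}\|T_{i,j}\|^{2}\leq 1$ hold. Applying these with the hypothesis $\|T_{i_0,i_0}\|=1$, the single summand of norm $1$ already saturates both the $i_0$-th row sum and the $i_0$-th column sum, so Remark \ref{zhu9.9} forces all other entries in that row and column to vanish:
$$T_{i,i_0}=0\ \ (i\neq i_0),\qquad T_{i_0,j}=0\ \ (j\neq i_0).$$

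Next I would regroup the decomposition as $\mathcal{H}=\mathcal{H}_{i_0}\oplus\mathcal{K}$ with $\mathcal{K}=\bigoplus_{i\neq i_0}\mathcal{H}_i$. The vanishing of the $i_0$-th column off the diagonal is precisely that the $(2,1)$-block of $T$ in this splitting is $0$, i.e. $T\mathcal{H}_{i_0}\subseteq\mathcal{H}_{i_0}$; the vanishing of the $i_0$-th row off the diagonal is that the $(1,2)$-block is $0$, i.e. $T\mathcal{K}\subseteq\mathcal{K}$. Hence $T$ is block diagonal,
$$T=\begin{pmatrix} T_{i_0,i_0} & 0\\ 0 & T|_{\mathcal{K}}\end{pmatrix},$$
so both $\mathcal{H}_{i_0}$ and $\mathcal{K}$ are invariant under $T$ and under $T^{*}$, and the orthogonal projection $P_{\mathcal{H}_{i_0}}$ commutes with $T$. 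Thus $\mathcal{H}_{i_0}$ reduces $T$.

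Finally I would verify nontriviality: because $\|T_{i_0,i_0}\|=1>0$ the space $\mathcal{H}_{i_0}$ cannot be $\{0\}$, and since the decomposition $\mathcal{H}=\bigoplus_{i=1}^{n}\mathcal{H}_i$ is nontrivial we have $\mathcal{K}\neq\{0\}$, so $\mathcal{H}_{i_0}\neq\mathcal{H}$. Therefore $P_{\mathcal{H}_{i_0}}$ is a nontrivial orthogonal projection in the commutant of $T$, and $T$ is reducible. There is no serious obstacle in this proof; the only point requiring attention is the bookkeeping in Remark \ref{zhu9.9}, namely that a diagonal block of norm exactly $1$ annihilates the rest of both its row and its column. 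Once that is granted, the reducing subspace is produced directly.
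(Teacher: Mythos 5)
Your proposal is correct and follows essentially the same route as the paper: both arguments rest on Remark \ref{zhu9.9} to kill the off-diagonal entries of the $i_0$-th row and column, and then exhibit $T$ as a direct sum $T_{i_0,i_0}\oplus(T_{i,j})_{i,j\neq i_0}$ (the paper does this by conjugating with a permutation unitary, you by regrouping the decomposition as $\mathcal{H}_{i_0}\oplus\mathcal{K}$, which is the same thing). Your explicit identification of $\mathcal{H}_{i_0}$ as the reducing subspace, with the nontriviality check, is if anything slightly cleaner than the paper's phrasing.
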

\begin{proof}
If there exists $1\leq i_{0}\leq n$ such that $\|T_{i_{0}i_{0}}\|=1$, by Remark \ref{zhu9.9}, we have
\begin{equation}\label{9.84}
T_{i_{0}j}=0,\ T_{ki_{0}}=0,\ j,k\neq i_{0}.
\end{equation}
We denote $\widetilde{T}$ by the operator $T$ satisfy equation (\ref{9.84}) and denote the identity operator on $\mathcal{H}$ by
$$I_{\mathcal{H}}=\left(\begin{matrix}I_{\mathcal{H}_{1}} & 0 & \cdots & 0 \\
0 & I_{\mathcal{H}_{2}} & \cdots & 0 \\
\vdots&\vdots & \ddots & \vdots\\
0 & 0 & \cdots &  I_{\mathcal{H}_{n}}\\
\end{matrix}\right), $$ where $I_{\mathcal{H}_{i}}$ is the identity operator on $\mathcal{H}_{i}$.
Let $I_{\mathcal{H}}(i, j)_{r}$ and $I_{\mathcal{H}}(i, j)_{c}$ represent operators that replaces $i$th and $j$th rows or columns of $I_{\mathcal{H}}$, respectively.
When $n=2$, by Remark \ref{zhu9.9}, it is easy to see that $T$ is reducible. For $n>2$, setting $U:=I_{\mathcal{H}}(1, i_{0})_{r}$. We have $U=U^{*}=U^{-1}=I_{\mathcal{H}}(1, i_{0})_{r}=I_{\mathcal{H}}(1, i_{0})_{c}$
and $$U\widetilde{T}U^{*}=T_{i_{0}i_{0}}\oplus(T_{ij})_{i, j\neq i_{0}}, $$where $(T_{ij})_{i, j\neq i_{0}}$ is obtained by removing $i_{0}$ row and $i_{0}$ column from $(T_{i, j})_{i, j=1}^{n}$.
Obviously, $U\widetilde{T}U^{*}$ is reducible. Recall that the unitary transformation does not change the reducibility of the operator, so the conclusion is valid.
\end{proof}
From the above discussion, it is intuitive to see that the reducibility of an operator $T$, when $T$ is contractive, can be characterized by a certain part.
Before proving the following lemma, we recall two concepts.
An operator $V$ in $\mathcal{L}(\mathcal{H})$ is an isometry if and only if $V^*V=I$.
A non negative definite kernel $K:\Omega\times\Omega\rightarrow \mathcal{M}_n(\mathbb{C})$ is said to be normalized at $w_0\in\Omega$ if
there exist a neighborhood $\Omega_0$ of $w_0\in\Omega$ such that $K(z,w_0)=I_{n\times n}$ for all $z\in\Omega$.

\begin{lem}\label{zylem}
Let $T\in B_{1}(\mathbb{D})$. Then $T^{*}$ is an isometric operator if and only if $T\sim_{u}(M_{z}^{*}, \mathcal{H}_{K}, K)$, where $\mathcal{H}_{K}$ is the analytic functional Hilbert space with reproducing
kernel $K(z, \omega)=\frac{1}{1-z\overline{\omega}}$, $z,w\in\mathbb{D}$.
\end{lem}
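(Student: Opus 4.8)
The plan is to prove the two implications separately, the reverse one being essentially trivial and the forward one resting on the Wold decomposition of an isometry together with the two defining axioms of $B_{1}(\mathbb{D})$.

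For the easy direction, suppose $T\sim_{u}(M_{z}^{*},\mathcal{H}_{K},K)$ with $K(z,\omega)=\frac{1}{1-z\overline{\omega}}$. Then $\mathcal{H}_{K}$ is the Hardy space and $M_{z}$ is the unilateral shift $S$, which is an isometry. Taking adjoints, $T^{*}\sim_{u}(M_{z}^{*})^{*}=M_{z}=S$, so $T^{*}$ is unitarily equivalent to an isometry and is therefore itself isometric. This uses only that a unitary conjugate of an isometry is an isometry, so no real work is involved.

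For the converse, assume $T\in B_{1}(\mathbb{D})$ and that $T^{*}$ is isometric. The key step is to apply the Wold decomposition to the isometry $T^{*}$: there is an orthogonal splitting $\mathcal{H}=\mathcal{H}_{1}\oplus\mathcal{H}_{2}$ reducing $T^{*}$ with $T^{*}=U\oplus W$, where $U$ is a unilateral shift of some multiplicity $m$ and $W$ is unitary. Dualizing gives $T=U^{*}\oplus W^{*}$. Now I would use the defining properties of $B_{1}(\mathbb{D})$ to force $m=1$ and $\mathcal{H}_{2}=\{0\}$. Indeed, for $\omega\in\mathbb{D}$, since $W$ is unitary we have $\sigma(W^{*})\subseteq\partial\mathbb{D}$, so $W^{*}-\omega$ is invertible and $\ker(W^{*}-\omega)=\{0\}$; hence $\ker(T-\omega)=\ker(U^{*}-\omega)$. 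Because $\dim\ker(T-\omega)=1$ while $\dim\ker(U^{*}-\omega)=m$, we conclude $m=1$. Moreover every eigenvector of $T$ for $\omega\in\mathbb{D}$ lies in $\mathcal{H}_{1}$, so $\bigvee_{\omega\in\mathbb{D}}\ker(T-\omega)\subseteq\mathcal{H}_{1}$; the spanning axiom $\bigvee_{\omega\in\mathbb{D}}\ker(T-\omega)=\mathcal{H}$ then forces $\mathcal{H}_{2}=\{0\}$, i.e. the unitary summand is absent.

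Consequently $T^{*}=U$ is the multiplicity-one unilateral shift $S$, which is unitarily equivalent to $M_{z}$ on the Hardy space; taking adjoints gives $T\sim_{u}M_{z}^{*}$ on $\mathcal{H}_{K}$ with $K(z,\omega)=\frac{1}{1-z\overline{\omega}}$, as required. I expect the main obstacle to be the bookkeeping in this converse: carefully justifying that the unitary part $W$ contributes no eigenvector for $\omega\in\mathbb{D}$ and that the eigenvectors of the multiplicity-one shift span exactly $\mathcal{H}_{1}$, so that the two structural axioms of $B_{1}(\mathbb{D})$ (the one-dimensional kernels and the spanning property) together eliminate both higher multiplicity and the unitary summand. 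An alternative, more computational route would realize $T$ as a backward weighted shift as in the preceding Proposition and show that $T^{*}$ isometric forces every weight to equal $1$, whence the coefficients $b_{n}$ are constant and $K(z,\omega)=\frac{c}{1-z\overline{\omega}}$; but this presupposes a diagonal (monomial) kernel and is less robust than the Wold-decomposition argument, so I would favor the latter.
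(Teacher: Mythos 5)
Your argument is correct, but it is genuinely different from the one in the paper. You resolve the forward implication structurally, via the Wold decomposition $T^{*}=U\oplus W$ of the isometry $T^{*}$ into a unilateral shift of multiplicity $m$ plus a unitary, and then let the two axioms of $B_{1}(\mathbb{D})$ do the work: $\ker(W^{*}-\omega)=\{0\}$ for $\omega\in\mathbb{D}$ because $\sigma(W^{*})\subseteq\partial\mathbb{D}$, so the one-dimensional-kernel condition forces $m=1$ and the spanning condition kills the unitary summand, leaving $T^{*}$ equal to the multiplicity-one shift. The paper instead stays inside its complex-geometric framework: it takes a holomorphic section $e(\omega)$ of $E_{T}$, uses $TT^{*}=I$ to write $T^{*}\omega e(\omega)=e(\omega)+\lambda(\omega)e(0)$ modulo $\ker T$, derives $\|e(\omega)\|^{2}(1-|\omega|^{2})=1-|\phi(\omega)|^{2}$, then invokes the Agler model theorem to get the tensor structure $\|e(\omega)\|^{2}=(1-|\omega|^{2})^{-1}\|r(\omega)\|^{2}$, concludes from $\|r(\omega)\|^{2}+|\phi(\omega)|^{2}=1$ that $\phi$ is constant, and finishes by computing that $\mathcal{K}_{T}=\mathcal{K}_{M_{z}^{*}}$ and appealing to the Cowen--Douglas rigidity theorem. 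Your route is more elementary and self-contained (it needs only Wold plus the definition of $B_{1}(\mathbb{D})$, and sidesteps both the model theorem and curvature rigidity), and it transparently explains \emph{why} the statement is true; the paper's route is thematically aligned with the rest of the article, keeps everything expressed in terms of metrics and curvature, and yields as a byproduct the explicit form $\|e(\omega)\|^{2}=(1-c)(1-|\omega|^{2})^{-1}$ of the bundle metric. Both are valid proofs of the lemma.
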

\begin{proof}
For $M_{z}^{*}$ on $\mathcal{H}_{K}$, we know that $M_{z}^{*}M_{z}=I$.
If there exists a unitary operator $U$, such that $T=UM_{z}^{*}U^{*}$, then $TT^{*}=UM_{z}^{*}U^{*}UM_{z}U^{*}=I$. So $T^{*}$ is isometric.

Conversely, it is well known that $T$ is unitarily equivalent to $M_z^*$ on some Hilbert space determined by the reproducing kernel $\widetilde{K}:\mathbb{D}\times\mathbb{D}\rightarrow \mathbb{C}$ and $\widetilde{K}(z,w)\neq0$ for all $z,w\in\mathbb{D}$.
Let $\widehat{K}(z,w)=\psi(z)\widetilde{K}(z,w)\psi(w)^*$, where $\psi(z)=\widetilde{K}(0,0)^{\frac{1}{2}}\widetilde{K}(z,0)^{-1}$ for each $z\in\mathbb{D}$.
It is easy to verify that $\widehat{K}$ is a non negative definite kernel over $\mathbb{D}$ and is normalized at 0.
Then there exist a unique Hilbert space $\mathcal{H}_{\widehat{K}}$ with reproducing kernel $\widehat{K}$.
By Remark 3.8 of \cite{CS2} given by R. E. Curto and N. Salinas, we have $M_z$ acting on $\mathcal{H}_{\widehat{K}}$ and $\mathcal{H}_{\widetilde{K}}$ are unitarily equivalent.
Thus, $M_z^*\in B_{1}(\mathbb{D})$ is unitarily equivalent to $T$ and $M_z$ is an isometric operator on $\mathcal{H}_{\widehat{K}}$.

Let $S$ be the adjoint of multiplication operator on $\mathcal{H}_{\widehat{K}}$ and $e(w)=\widehat{K}(\cdot,\overline{w})$ for every $w\in\mathbb{D}$.
Then $e$ is a non-zero holomorphic section of $E_S$ and $Se(\omega)=\omega e(\omega)$, $\omega\in\mathbb{D}$ on $\mathcal{H}_{\widehat{K}}$.
Further, since $S^{*}$ is isometric, we have $SS^{*}\omega e(\omega)=we(\omega)$ and $S(S^{*}\omega e(\omega)-e(\omega))=0$.
Then we will find a holomorphic function $\lambda$ on $\mathbb{D}$ satisfy $S^{*}\omega e(\omega)=e(\omega)+\lambda(\omega)e(0)$, $\omega\in\mathbb{D}$ (see details in Proposition 2.4 \cite{RS2}).
It follows from $S^{*}$ is isometric and $\widehat{K}$ is normalized at 0 that
$$\begin{array}{lll}
|\omega|^{2}\|e(\omega)\|^{2}
&=&\|e(\omega)+\lambda(\omega)e(0)\|^{2}\\
&=&\|e(\omega)\|^{2}+\lambda(\omega)\langle e(0), e(\omega)\rangle+\overline{\lambda(\omega)}\langle e(\omega), e(0)\rangle+|\lambda(\omega)|^{2}\|e(0)\|^{2}\\
&=&\|e(\omega)\|^{2}+\lambda(\omega)+\overline{\lambda(\omega)}+|\lambda(\omega)|^{2}.
\end{array}$$
Let $\phi(\omega)=1+\lambda(\omega)$ for $w\in\mathbb{D}$.
Therefore, we obtain
\begin{equation}\label{deng3}
\|e(\omega)\|^{2}(1-|\omega|^{2})=1-|\phi(\omega)|^{2}.
\end{equation}

Note that $\|S^{*}\|=\|S\|=1$.
By Theorem \ref{9.211}, we have $E_{S}\sim_uE_{M_{z}^{*}}\otimes \mathcal{E}$, where $M_{z}^{*}$ acting on $\mathcal{H}_{K}$ and $\mathcal{E}$ is a bundle with rank one.
Then there exists a holomorphic vector-valued function $r$ on $\mathbb{D}$ such that $$\|e(\omega)\|^{2}=\frac{1}{1-|\omega|^{2}}\|r(\omega)\|^{2}.$$
Combining with equation (\ref{deng3}), we have
$\|r(\omega)\|^{2}+|\phi(\omega)|^{2}=1$. That means $\|r'(\omega)\|^{2}+|\phi'(\omega)|^{2}=0$ and $\phi(\omega)=c\in \mathbb{C}$. Then
$\|e(\omega)\|^{2}(1-|\omega|^{2})=1-c.$ A simple calculation shows that $\mathcal{K}_{S}(\omega)=-\frac{1}{(1-|w|^2)^2}$ for $w\in\mathbb{D}$.
From Theorem \ref{CDT1}, we have $M_{z}^{*}$ on $\mathcal{H}_{K}$ is similar to $S$ and then to $T$.
\end{proof}
\begin{prop}\label{pro1}
Let $T\in \mathcal{L}(\mathcal{H})$ be a contractive operator. If there exists $\mathcal{M}\in Lat(T)$, such that $T|_{\mathcal{M}}\in B_{1}(\mathbb{D})$ and $(T|_{\mathcal{M}})^{*}$ is an isometric operator, then $T$ is reducible.
\end{prop}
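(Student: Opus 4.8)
The plan is to exhibit $T$ in the block upper-triangular form attached to its invariant subspace $\mathcal{M}$, and then to recognise that the hypotheses place us exactly in the situation of Proposition \ref{yl9.82}. First I would write, as in the proof of Theorem \ref{xiangsi},
$$T=\left(\begin{matrix} T|_{\mathcal{M}} & P_{\mathcal{M}}T|_{\mathcal{M}^{\perp}}\\ 0 & (T^{*}|_{\mathcal{M}^{\perp}})^{*}\end{matrix}\right)$$
with respect to the orthogonal decomposition $\mathcal{H}=\mathcal{M}\oplus\mathcal{M}^{\perp}$, the zero in the lower-left corner being forced by $\mathcal{M}\in Lat(T)$.

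Next I would compute the norm of the $(1,1)$-block. Because $\mathcal{M}$ is invariant, $T|_{\mathcal{M}}$ is a genuine restriction, so $\|T|_{\mathcal{M}}\|\leq\|T\|\leq1$; on the other hand, the hypothesis that $(T|_{\mathcal{M}})^{*}$ is isometric gives $\|(T|_{\mathcal{M}})^{*}x\|=\|x\|$ for every $x\in\mathcal{M}$, whence $\|T|_{\mathcal{M}}\|=\|(T|_{\mathcal{M}})^{*}\|=1$. Thus the contraction $T$, written as a $2\times2$ operator matrix, has a diagonal entry of norm exactly one.

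The conclusion is then immediate from the rigidity of contractions recorded in Remark \ref{zhu9.9} together with Proposition \ref{yl9.82}: taking $n=2$ and $i_{0}=1$, the norm-one diagonal block annihilates the coupling $P_{\mathcal{M}}T|_{\mathcal{M}^{\perp}}$, so that $T=T|_{\mathcal{M}}\oplus(T^{*}|_{\mathcal{M}^{\perp}})^{*}$ splits as an orthogonal direct sum and is therefore reducible. If one wishes to make the summand explicit, Lemma \ref{zylem} further identifies $T|_{\mathcal{M}}\sim_{u}M_{z}^{*}$ on the Hardy space.

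As for difficulty, there is essentially no analytic obstacle here: the whole force of the argument is the observation that an isometric adjoint produces a diagonal block of norm exactly one, which is precisely the trigger for Proposition \ref{yl9.82}. The only point requiring a little care is to confirm that the hypotheses of that proposition—contractivity of the ambient operator together with a norm-one diagonal entry—are simultaneously met, after which the off-diagonal entry vanishes automatically and reducibility follows.
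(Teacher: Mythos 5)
Your reduction of the hypothesis to ``$\|T|_{\mathcal{M}}\|=1$'' discards exactly the information that makes the proposition true, and the step you then invoke --- that a contraction with a norm-one diagonal block must have vanishing off-diagonal blocks in that row and column (Remark \ref{zhu9.9}, feeding Proposition \ref{yl9.82}) --- fails for operator entries. Take $A=\mathrm{diag}\bigl(\sqrt{1-1/n}\bigr)_{n\geq1}$ and $B=\mathrm{diag}\bigl(1/\sqrt{n}\bigr)_{n\geq1}$ on $\ell^{2}$ and set $T=\left(\begin{smallmatrix}A&B\\0&0\end{smallmatrix}\right)$: since $a_{n}^{2}+b_{n}^{2}=1$, the operator $I-T^{*}T$ decomposes into the rank-one positive blocks $\left(\begin{smallmatrix}b_{n}^{2}&-a_{n}b_{n}\\-a_{n}b_{n}&a_{n}^{2}\end{smallmatrix}\right)$, so $T$ is a contraction with $\|A\|=1$ and $B\neq0$. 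The underlying error is the passage, in the proof of Lemma \ref{yasuo1}, from $\sum_{i}\|T_{i,j}x\|^{2}\leq\|x\|^{2}$ (valid for every $x$) to $\sum_{i}\|T_{i,j}\|^{2}\leq1$: the several operator norms need not be approached on the same vector. A norm-one block only forces its companions to be small along a sequence of near-maximizing vectors, not to vanish identically.

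What is actually needed is the full co-isometry relation $(T|_{\mathcal{M}})(T|_{\mathcal{M}})^{*}=I_{\mathcal{M}}$, which says that the norm of $T|_{\mathcal{M}}$ is attained on the \emph{entire range} of the isometry $(T|_{\mathcal{M}})^{*}$. This is precisely how the paper argues: by Lemma \ref{zylem} it conjugates $T|_{\mathcal{M}}$ to $M_{z}^{*}$ on the Hardy space, then applies the contraction $\widetilde{T}^{*}\widetilde{T}$ to the vectors $(M_{z}x,0)^{T}$; since $M_{z}^{*}M_{z}=I$, the first component returns with its norm unchanged, so contractivity forces $T_{1,2}^{*}U^{*}x=0$ for \emph{every} $x$, whence $T_{1,2}=0$. (Equivalently, without Lemma \ref{zylem}: writing $T_{1}=T|_{\mathcal{M}}$, positivity of $I-T^{*}T$ together with $(I-T_{1}^{*}T_{1})T_{1}^{*}u=0$ for all $u$ forces the off-diagonal entry $-T_{1}^{*}T_{1,2}$ to satisfy $T_{1,2}^{*}T_{1}T_{1}^{*}u=T_{1,2}^{*}u=0$.) Your argument never uses the isometry hypothesis beyond extracting the norm, so the coupling term cannot be shown to vanish along your route.
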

\begin{proof}
Since $\mathcal{M}\in Lat(T)$, $T$ has the following form under the space decomposition on $\mathcal{H}=\mathcal{M}\oplus\mathcal{M}^{\perp}$:
$$T=\left ( \begin{matrix}T|_{\mathcal{M}} & P_{\mathcal{M}}T|_{\mathcal{M}^{\perp}}\\
0 & P_{\mathcal{M}^{\perp}}T|_{\mathcal{M}^{\perp}}\\
\end{matrix}\right ):=\left ( \begin{matrix}T_{1} & T_{12}\\
0 & T_{2}\\
\end{matrix}\right ).$$
If $T_{1}\in B_{1}(\mathbb{D})$, $T_{1}^{*}$ is an isometric operator, then by Lemma \ref{zylem}, there exists a unitary operator $U$, such that  $M_{z}^{*}=UT_{1}U^{*}$, where $M_{z}$ is the multiplication operator on the analytic functional Hilbert space $\mathcal{H}_K$ with reproducing kernel $K(z, \omega)=\frac{1}{1-z\overline{\omega}}$, $z,w\in\mathbb{D}$. Thus, we have
$$\begin{array}{lll}
(U\oplus I)T(U\oplus I)^{*}
&=&\left ( \begin{matrix}UT_{1}U^{*} & \ UT_{12}\\
0 & T_{2}\\
\end{matrix}\right)
=\left ( \begin{matrix}M_{z}^{*} & \ UT_{12}\\
0 & T_{2}\\
\end{matrix}\right)
:=\widetilde{T}.
\end{array}$$
We know that the unitary operator does not change the reducibility and contractility of the operator, so we only need to show that the reducibility of the contractive operator $\widetilde{T}$.
Note that
$$\widetilde{T}^{*}\widetilde{T}=\left(\begin{matrix}M_{z}M_{z}^{*} & M_{z}UT_{12}\\
T_{12}^{*}U^{*}M_{z}^{*} & \ T_{12}^{*}T_{12}+T_{2}^{*}T_{2}\\
\end{matrix}\right)$$
is contractive and $M_{z}^{*}M_{z}=I$, then for any $x\in \mathcal{H}_K$, we have
$$\widetilde{T}^{*}\widetilde{T}\left ( \begin{matrix}M_{z}x\\
0\\
\end{matrix}\right)=\left(\begin{matrix}M_{z}M_{z}^{*}M_{z}x\\
T_{12}^{*}U^{*}M_{z}^{*}M_{z}x\\
\end{matrix}\right )=\left(\begin{matrix}M_{z}x\\
T_{12}^{*}U^{*}x\\
\end{matrix}\right), $$
and
$$\biggl\|\left(\begin{matrix}M_{z}x\\
T_{12}^{*}U^{*}x\\
\end{matrix}\right)\biggl\|\leq \biggl\|\left ( \begin{matrix}M_{z}x\\
0\\
\end{matrix}\right )\biggl\|. $$
That is, $T_{12}^{*}U^{*}x=0, x\in\mathcal{H}_K$. By the arbitrariness of $x$, we obtain $UT_{12}=0$ and $T$ is reducible.
\end{proof}
A necessary condition for an operator to be contractive is given in Lemma \ref{yasuo1}. In the following we will discuss sufficient conditions.
Before that, a symbol that will be used below will be introduced.
For $T\in\mathcal{L}(\mathcal{H})$, we write $T\geq0$ or $0\leq T$ to denote that $T$ is positive, i.e. $\langle Tx,x\rangle\geq0$ for any $x\in\mathcal{H}$.
For two operators $T,\widetilde{T}\in\mathcal{L}(\mathcal{H})$, we write $T\geq \widetilde{T}$ if $T-\widetilde{T}\geq0$.
Suppose that $0\leq T\leq I$, then there exist $S\in\mathcal{L}(\mathcal{H})$ such that $T=S^*S$ and
$\langle Tx,x\rangle=\langle S^{*}Sx,x\rangle=\langle Sx,Sx\rangle=\|Sx\|^{2}\leq\|x\|^{2}$ for any $x\in\mathcal{H}$.
It follows that $\|T\|=\|S^{*}S\|=\|S\|^{2}\leq1$. So $T$ is contractive.
Therefore, we have the following proposition.
\begin{prop}\label{lemma1}
Let $T\in\mathcal{L}(\mathcal{H})$. If $T$ is positive and $T\leq I$, then $T$ is contractive.
\end{prop}

\begin{prop}
Let $T\in\mathcal{L}(\mathcal{H})$. If there exists $\mathcal{M}\in Lat(T)$, such that $$\|T|_{\mathcal{M}}\|^{2}\leq\frac{1}{2}, \|P_{\mathcal{M}}T|_{\mathcal{M}^{\bot}}\|^{2}\leq\frac{1}{2}\big(1-\|(T^{*}|_{\mathcal{M}^{\bot}})^{*}\|^{2}\big),$$ then $T$ is contractive.
\end{prop}
\begin{proof}
Since $\mathcal{M}\in Lat(T)$, $T$ has the following form under the space decomposition of $\mathcal{M}\oplus\mathcal{M}^{\perp}$:
$$T=\left(\begin{matrix}
T|_{\mathcal{M}} & P_{\mathcal{M}}T|_{\mathcal{M}^{\perp}}\\
0 & P_{\mathcal{M}^{\perp}}T|_{\mathcal{M}^{\perp}} \\
\end{matrix}\right)=\left(\begin{matrix}
T|_{\mathcal{M}} & P_{\mathcal{M}}T|_{\mathcal{M}^{\perp}}\\
0 & (T^{*}|_{\mathcal{M}^{\bot}})^{*} \\
\end{matrix}\right):=\left(\begin{matrix}
T_{1} & T_{12}\\
0 & T_{2}\\
\end{matrix}\right).$$
For any $x\in\mathcal{M}$ and $y\in\mathcal{M}^{\perp}$, we have
$$\begin{array}{lll}
\|T(x\oplus y)\|^{2}
&=&\|T_{1}x+ T_{12}y\|^{2}+\|T_{2}y\|^{2}\\
&\leq&(\|T_{1}x\|+\|T_{12}y\|)^{2}+\|T_{2}y\|^{2}\\
&\leq&2\|T_{1}x\|^{2}+2\|T_{12}y\|^{2}+\|T_{2}y\|^{2}\\
&\leq& 2\|T_{1}\|^{2}\|x\|^{2}+2\|T_{12}\|^{2}\|y\|^{2}+\|T_{2}\|^{2}\|y\|^{2}\\
&=&2\|T_{1}\|^{2}\|x\|^{2}+(2\|T_{12}\|^{2}+\|T_{2}\|^{2})\|y\|^{2}.
\end{array}$$
It follows from $\|T_{1}\|^{2}\leq\frac{1}{2}$ and $\|T_{12}\|^{2}\leq\frac{1}{2}(1-\|T_{2}\|^{2})$ that
$$2\|T_{1}\|^{2}\|x\|^{2}+(2\|T_{12}\|^{2}+\|T_{2}\|^{2})\|y\|^{2}\leq\|x\|^{2}+\|y\|^{2}.$$So we obtain
$\|T(x\oplus y)\|^{2}\leq\|x\|^{2}+\|y\|^{2}. $
By the arbitrariness of $x, y$, we know $T$ is a contractive operator.
\end{proof}

\begin{prop}
Let $T\in\mathcal{L}(\mathcal{H})$. If there exists $\mathcal{M}\in Lat(T)$ such that $1-(T|_{\mathcal{M}})^{*}T|_{\mathcal{M}}$ is positive and invertible and
\begin{equation}\label{20230401}
I-T^{*}|_{\mathcal{M}^{\bot}}(T^{*}|_{\mathcal{M}^{\bot}})^{*}\geq (P_{\mathcal{M}}T|_{\mathcal{M}^{\bot}})^{*}\big(I-T|_{\mathcal{M}}(T|_{\mathcal{M}})^{*}\big)^{-1}P_{\mathcal{M}}T|_{\mathcal{M}^{\bot}},
\end{equation}
then $T$ is contractive.
\end{prop}
\begin{proof}
Since $1-T|_{\mathcal{M}}^{*}T|_{\mathcal{M}}$ is positive, let $A$ be the square root of $1-(T|_{\mathcal{M}})^{*}T|_{\mathcal{M}}$.
From its invertibility, we can define the operator $B=-A^{-1}(T|_{\mathcal{M}})^{*}P_{\mathcal{M}}T|_{\mathcal{M}^{\bot}}$.
By equation (\ref{20230401}), there exists an operator $C$ such that
$C^*C=I-T^{*}|_{\mathcal{M}^{\bot}}(T^{*}|_{\mathcal{M}^{\bot}})^{*}- (P_{\mathcal{M}}T|_{\mathcal{M}^{\bot}})^{*}\big(I-T|_{\mathcal{M}}(T|_{\mathcal{M}})^{*}\big)^{-1}P_{\mathcal{M}}T|_{\mathcal{M}^{\bot}}$.
Note that $\big(I-T|_{\mathcal{M}}(T|_{\mathcal{M}})^{*}\big)^{-1}=I+T|_{\mathcal{M}}\big(I-(T|_{\mathcal{M}})^{*}T|_{\mathcal{M}}\big)^{-1}(T|_{\mathcal{M}})^{*}$, we have
$$\begin{array}{lll}
I-T^*T&=&\left(\begin{matrix} I-(T|_{\mathcal{M}})^{*}T|_{\mathcal{M}} & -(T|_{\mathcal{M}})^{*}P_{\mathcal{M}}T|_{\mathcal{M}^{\bot}}\\
-(P_{\mathcal{M}}T|_{\mathcal{M}^{\bot}})^*T|_{\mathcal{M}} &\ I-(P_{\mathcal{M}}T|_{\mathcal{M}^{\bot}})^*P_{\mathcal{M}}T|_{\mathcal{M}^{\bot}}- T^{*}|_{\mathcal{M}^{\bot}}(T^{*}|_{\mathcal{M}^{\bot}})^{*}\\
\end{matrix}\right)\\
&=&\left(\begin{matrix} A^2 & AB\\
B^*A & B^*B+C^*C \\
\end{matrix}\right )\\
&=&\left(\begin{matrix} A & 0\\
B^* & C^* \\
\end{matrix}\right )\left(\begin{matrix} A & B\\
0 & C \\
\end{matrix}\right )\\
&\geq&0.
\end{array}$$
Hence, $T$ is contractive.
\end{proof}
As a consequence, we have the following example.
\begin{ex}\label{li9.8}
Let $T_1$, $T_2$ and $T_{12}$ be operators on $\mathcal{H}$ satisfying $T_1e_i=a_{i-1}e_{i-1}$, $T_2e_i=b_{i-1}e_{i-1}$ and $T_{12}e_i=d_ie_i$ for some orthonormal basis $\{e_i\}_{i=1}^\infty$ of $\mathcal{H}$ and sequences $\{a_{i}\}_{i=1}^{\infty},\ \{b_{i}\}_{i=1}^{\infty}$ and $\{d_{i}\}_{i=1}^{\infty}$.
Suppose that $0\leq a_{i},b_{i},d_i\leq1$ for $i\geq 1$ and $\{a_{i}\}_{i=1}^{\infty}$ is always not 1.
If $d_{1}^{2}\leq1-a_{1}^{2}$ and $d_{i}^{2}\leq(1-a_{i}^{2})(1-b_{i-1}^{2})$ for $i\geq2$, then $T=\left(\begin{matrix} T_{1} & T_{12}\\
0 & T_{2} \\
\end{matrix}\right )$ is contractive.
\end{ex}
\begin{proof}
Since $0\leq a_{i}<1$ for $i\geq 1$, there exist operator $A$ defined by $Ae_1=1$ and $Ae_i=\sqrt{1-a_{1}^{2}}e_i$, $i>1$ such that $1-T_{1}^{*}T_{1}=A^2$.
Let $B=-A^{-1}T_1^*T_{12}$. By our hypotheses, we have $c_1:=1-\frac{d_1^2}{1-a_1^2}\geq0$ and $c_i:=1-b_{i-1}^2-\frac{d_i^2}{1-a_i^2}\geq0$ for $i>1$.
Define the operator $C$ as $Ce_i=c_ie_i$ for every $i\geq1$. Then $C$ is a positive operator.
Note that $\left(\begin{matrix} 1-T_{1}^{*}T_{1} & -T_{1}^{*}T_{1,2}\\
-T_{1,2}^{*}T_{1} & 1-T_{1,2}^{*}T_{1,2}-T_{2}^{*}T_{2} \\
\end{matrix}\right)=\left(\begin{matrix} A^2 & AB\\
B^*A &\ B^{*}B+C \\
\end{matrix}\right)$.
It follows that $1-T^{*}T=\left(\begin{matrix} A & 0\\
C^{*} & C^{\frac{1}{2}} \\
\end{matrix}\right)\left(\begin{matrix} A & B\\
0 & C^{\frac{1}{2}}\\
\end{matrix}\right )\geq0$. Thus, $T$ is a contraction.
\end{proof}

\subsection{N-hypercontractivity and reducibility of operators.}
It is shown in Lemma \ref{yasuo1} that when an operator $T$ is contractive, so are every part of $T$.
The following two discussions illustrate that this property cannot be fully generalized to $M$-contraction, even under the assumption of $n$-hypercontraction for $n>1$.
For a fixed Cowen-Douglas atom, we assume that its corresponding sequence of polynomials in (2) of Definition \ref{1.27} is $\{p_{l}(z, \overline{\omega})\}$,
where $p_{l}(z, \overline{\omega})=\sum\limits_{i,j=0}^{k_l}a_{ij}^lz^i\overline{\omega}^j$.
\begin{lem}\label{202302191}
Let $T\in B_{n}(\mathbb{D})\bigcap\mathcal{L}(\mathcal{H})$. Let $M$ be a Cowen-Douglas atom on some analytic functional Hilbert space $\mathcal{H}_{K}$ with reproducing kernel $K$.
Suppose that there exists $\mathcal{M}\in Lat(T)$.
If $T$ is $M$-contractive and $T|_{\mathcal{M}}\in B_{m}(\mathbb{D})$ for $m<n$, then $T|_{\mathcal{M}}$ is $M$-contractive.
\end{lem}
\begin{proof}

From the proof of Theorem \ref{xiangsi}, we have $T=\left(\begin{matrix}
T|_{\mathcal{M}} & P_{\mathcal{M}}T|_{\mathcal{M}^{\perp}}\\
0 & (T^{*}|_{\mathcal{M}^{\perp}})^{*} \\
\end{matrix}\right)$.
Note that for any positive integer $j$, $T^j=\left(\begin{matrix}
T_0^j & \triangle_j\\
0 & T_1^j \\
\end{matrix}\right)$, then we have $$T^{*i}T^j=\left(\begin{matrix}
T_0^{*i} & 0\\
\triangle_i^* & T_1^{*i} \\
\end{matrix}\right)\left(\begin{matrix}
T_0^j & \triangle_j\\
0 & T_1^j \\
\end{matrix}\right)=\left(\begin{matrix}
T_0^{*i}T_0^j & T_0^{*i}\triangle_j\\
\triangle_i^*T_0^j & \triangle_i^*\triangle_j+T_1^{*i}T_1^j \\
\end{matrix}\right)$$
and $$p_l(T^*,T)=\sum\limits_{i,j=0}^{k_l}a_{ij}^lT^{*i}T^j=\left(\begin{matrix}
p_l(T_0^*,T_0) & \sum\limits_{i,j=0}^{k_l}a_{ij}^lT_0^{*i}\triangle_j\\
\sum\limits_{i,j=0}^{k_l}a_{ij}^l\triangle_i^*T_0^j & \sum\limits_{i,j=0}^{k_l}a_{ij}^l\triangle_i^*\triangle_j+p_l(T_1^*,T_1) \\
\end{matrix}\right).$$
Since $T$ is $M$-contractive, $\sup\limits_l\|p_l(T^*,T)\|<\infty$. It follows that $\sup\limits_l\|p_l(T_0^*,T_0)\|<\infty$.

Recall that $C=\left(\begin{matrix}
C_0 & C_{01}\\
C_{10} & C_1 \\
\end{matrix}\right)\begin{matrix}
\mathcal{M} \\
\mathcal{M}^{\perp}\\
\end{matrix}\in\mathcal{L}(\mathcal{H})$ is a positive operator, then for any $x=x_0\oplus x_1\in\mathcal{M}\bigoplus\mathcal{M}^{\perp}$, $\langle Cx,x\rangle_{\mathcal{H}}\geq0$.
If $x_1=0$, then $\langle C_0x_0,x_0\rangle_{\mathcal{M}}\geq0$. From the arbitrariness of $x_0\in\mathcal{M}$, it can be obtained that $C_0\in\mathcal{L}(\mathcal{M})$ is positive.
Without loss of generality, we assume that $(WOT)-\lim\limits_{l\rightarrow\infty}p_l(T^*,T)=C$.
That means $\langle p_l(T^*,T)x-Cx,y\rangle\rightarrow0$ for each $x,y\in\mathcal{H}$. Let $x, y$ satisfy $P_{\mathcal{M}^{\perp}}x=P_{\mathcal{M}^{\perp}}y=0$.
Thus, $(WOT)-\lim\limits_{l\rightarrow\infty}P_{\mathcal{M}}p_l(T^*,T)P_{\mathcal{M}}=C_0$ and $T|_{\mathcal{M}}$ is $M$-contractive.
\end{proof}
The following proposition is an immediate consequence.
\begin{prop}\label{9.132}
Let $T\in\mathcal{L}(\mathcal{H})$. If $T$ is $n-$hypercontractive, then for any $\mathcal{M}\in Lat(T)$, $T|_{\mathcal{M}}$ is also.
\end{prop}

More recently, the first and fourth authors, along with H. Kwon in \cite{JKX} proved that a necessary condition for a weighted backward shift operator $T$ with a weight sequence of $\{w_i\}_{i=1}^\infty$ to be an $n$-hypercontraction is that $w_i\leq\frac{i}{n+i-1}$.
By using this result, we show that in Proposition \ref{9.132} $(T^{*}|_{\mathcal{M}^{\perp}})^{*}$ is not necessarily $2$-hypercontractive when $n=2$.

\begin{ex}
Let $T=\left(\begin{matrix} T_{1} & T_{12}\\
0 & T_{2} \\
\end{matrix}\right)\in\mathcal{L}(\mathcal{H}\oplus\mathcal{H})$. Then there exists a not $2-$hypercontractive operator $T_{2}$, such that $T$ is $2-$hypercontractive.
\end{ex}
\begin{proof}
If $T$ is $2-$hypercontractive, then $T_{1}, T_{12}, T_{2}$ are contractive by Lemma \ref{yasuo1} and $$I-2T^{*}T+T^{*2}T^{2}=\left(\begin{matrix} I-2T_{1}^{*}T_{1}+T_{1}^{*2}T_{1}^{2} & -2T_{1}^{*}T_{12}+T_{1}^{*2}(T_{1}T_{12}+T_{12}T_{2})\\
-2T_{12}^{*}T_{1}+(T_{12}^{*}T_{1}^{*}+T_{2}^{*}T_{12}^{*})T_{1}^{2} & \Delta\\
\end{matrix}\right)$$
is positive, where $\Delta=I-2T_{2}^{*}T_{2}+T_{2}^{*2}T_{2}^{2}-2T_{12}^{*}T_{12}+(T_{12}^{*}T_{1}^{*}+T_{2}^{*}T_{12}^{*})(T_{1}T_{12}+T_{12}T_{2})$.
Assume that $T_{1}$ and $T_{2}$ are weighted backward shift operators with weight sequences $\big\{\sqrt{\frac{i}{i+3}},i\geq1\big\}$ and $\big\{\sqrt{\frac{1}{2}}+x,\sqrt{\frac{i}{i+1}},i\geq2\big\}$ for $x>0$ and $T_{12}$ satisfies $T_{12}e_{i}=
\begin{cases}
ye_{1}, i=1\\
0, i> 1
\end{cases}$for $y\neq0$ respect to an orthonormal basis $\{e_i\}_{i=1}$ of $\mathcal{H}$, respectively.
We know $T_{2}$ is not a 2-hypercontraction by Theorem 1.1 of \cite{JKX}.
Next, we will prove that there exist suitable $x$ and $y$ such that $T$ is 2-hypercontractive.

Define operators $A$ and $B$ on $\mathcal{H}$ as $Ae_i=\sqrt{\frac{6}{(i+1)(i+2)}}e_i$ for $i\geq1$ and $Be_{i}=
\begin{cases}
-\sqrt{2}ye_{2}, i=1\\
(\sqrt{\frac{1}{6}}+\sqrt{\frac{1}{3}}x)ye_{3}, i=2.\\
0, i>2
\end{cases}$
Note that the solution of
$$\begin{cases}
1-4y^2\geq0\\
1+(\sqrt{\frac{1}{2}}+x)^2(\frac{2}{3}y^2-2)\geq0
\end{cases}$$ is not an empty set for $x>0$. Then $C:=diag\Big(1-4y^2,1+(\sqrt{\frac{1}{2}}+x)^2(\frac{2}{3}y^2-2),-\frac{1}{3}+\frac{2}{3}(\sqrt{\frac{1}{2}}+x)^2,0,\cdots\Big)$ respect to the orthonormal basis $\{e_i\}_{i=1}$ of $\mathcal{H}$.
A routine verification shows that
$$I-2T^{*}T+T^{*2}T^{2}=\left(\begin{matrix} A^{2} & AB\\
B^{*}A &\ B^{*}B+C \\
\end{matrix}\right)=\left(\begin{matrix} A & 0\\
B^{*} & C^{\frac{1}{2}} \\
\end{matrix}\right)\left(\begin{matrix} A & B\\
0 & C^{\frac{1}{2}} \\
\end{matrix}\right ). $$
Thus, we obtain $T$ is 2-hypercontractive. This completes the proof.
\end{proof}
Before discussing the relationship between the $n-$hypercontractivity and reducibility of operators, we need the following lemma.

\begin{lem}\label{lemma4}
If $T\in\mathcal{L}(\mathcal{H})$ is $n-$hypercontractive, then operator $\sum\limits_{j=1}^{n}(-1)^{j+1}{n \choose j}(T^{*})^{j}T^{j}$ is contractive.
\end{lem}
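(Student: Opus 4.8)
The plan is to show that the self‑adjoint operator
$S:=\sum_{j=1}^{n}(-1)^{j+1}{n \choose j}(T^{*})^{j}T^{j}$
satisfies $0\le S\le I$, and then to invoke Lemma \ref{lemma1} to conclude that $S$ is contractive. Throughout I would write $D_{k}:=\sum_{j=0}^{k}(-1)^{j}{k \choose j}(T^{*})^{j}T^{j}$ for the defect operators, so that the $n$-hypercontractivity of $T$ is exactly the statement $D_{k}\ge0$ for $1\le k\le n$, together with the trivial $D_{0}=I\ge0$. Note that $S$ is self-adjoint, so contractivity will follow once the two-sided bound $0\le S\le I$ is established.

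First I would record the upper bound, which is immediate. Since ${n \choose 0}=1$, we have $D_{n}=I+\sum_{j=1}^{n}(-1)^{j}{n \choose j}(T^{*})^{j}T^{j}=I-S$, that is, $S=I-D_{n}$. Because $T$ is $n$-hypercontractive, $D_{n}\ge0$, and hence $S=I-D_{n}\le I$.

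The substance of the argument is the lower bound $S\ge0$, and this is exactly where Lemma \ref{lemma3} enters. The idea is to rewrite $S$ in a manifestly positive form. Summing the defect operators over $k=0,\dots,n-1$ and interchanging the order of summation gives $\sum_{k=0}^{n-1}D_{k}=\sum_{j=0}^{n-1}(-1)^{j}\bigl(\sum_{k=j}^{n-1}{k \choose j}\bigr)(T^{*})^{j}T^{j}$, and the inner coefficient collapses to $\sum_{k=j}^{n-1}{k \choose j}={n \choose j+1}$, which is precisely the hockey-stick identity of Lemma \ref{lemma3} after the index shift $j\mapsto j+1$. Conjugating by $T$ then yields $T^{*}\bigl(\sum_{k=0}^{n-1}D_{k}\bigr)T=\sum_{j=0}^{n-1}(-1)^{j}{n \choose j+1}(T^{*})^{j+1}T^{j+1}=\sum_{i=1}^{n}(-1)^{i+1}{n \choose i}(T^{*})^{i}T^{i}=S$, so that $S=\sum_{k=0}^{n-1}T^{*}D_{k}T$.

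Now each summand is positive: for $0\le k\le n-1$ we have $D_{k}\ge0$ (either $D_{0}=I$ or by $n$-hypercontractivity), and Lemma \ref{lemma2} gives $T^{*}D_{k}T\ge0$; summing over $k$ yields $S\ge0$. Combining this with $S\le I$ from the second step and applying Lemma \ref{lemma1} shows that $S$ is contractive, which is the assertion. I expect the only real obstacle to be the combinatorial rearrangement in the third step, namely confirming that the binomial coefficients telescope via Lemma \ref{lemma3} so that $S$ acquires the positive form $\sum_{k=0}^{n-1}T^{*}D_{k}T$; the remaining bounds are direct consequences of the definition of $n$-hypercontractivity and of Lemmas \ref{lemma1} and \ref{lemma2}.
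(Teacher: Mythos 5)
Your proposal is correct and follows essentially the same route as the paper: the paper likewise sums the defect inequalities $D_0,\dots,D_{n-1}$, collapses the coefficients via Lemma \ref{lemma3}, conjugates by $T$ using Lemma \ref{lemma2} to get $S\ge 0$, reads off $S\le I$ from the $n$-th inequality, and concludes with Lemma \ref{lemma1}. The only cosmetic difference is that you present $S=\sum_{k=0}^{n-1}T^{*}D_kT$ as a sum of positive terms rather than conjugating the summed inequality in one step.
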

\begin{proof}
If $T\in\mathcal{L}(\mathcal{H})$ is $n-$hypercontractive, we have
\begin{equation}\label{budengs1}
\sum\limits_{j=0}^{k}(-1)^{j}{k \choose j}(T^{*})^{j}T^{j}
\end{equation}
is positive for any $1\leq k\leq n$. Adding up these equations, we arrive at
\begin{equation}\label{202304021}
\sum\limits_{j=1}^{n}(-1)^{j-1}\biggl(\sum\limits_{i=j-1}^{n-1}{i \choose j-1}\biggl)(T^{*})^{j-1}T^{j-1}\geq0.
\end{equation}
It is well known that ${l \choose k}={l-1 \choose k-1}+{l-1 \choose k}$ for positive integers $1\leq k\leq l-1$.
By using this equation repeatedly, we have
$$\begin{array}{lll}
{n \choose j}
&=&{n-1 \choose j-1}+{n-1 \choose j}\\
&=&{n-1 \choose j-1}+{n-2 \choose j-1}+{n-2 \choose j}\\
&=&{n-1 \choose j-1}+{n-2 \choose j-1}+{n-3 \choose j-1}+{n-3 \choose j}\\
&=&{n-1 \choose j-1}+{n-2 \choose j-1}+\cdots+{j+1 \choose j-1}+{j \choose j-1}+{j \choose j}\\
&=&{n-1 \choose j-1}+{n-2 \choose j-1}+\cdots+{j+1 \choose j-1}+{j \choose j-1}+{j-1 \choose j-1}
\end{array}$$
for every $1\leq j\leq n-1$. Further, we infer that inequality (\ref{202304021}) is equivalent to the following
$$\sum\limits_{j=1}^{n}(-1)^{j-1}{n \choose j}(T^{*})^{j-1}T^{j-1}\geq0.$$
Recall that for any positive operator $A$ and bounded operator $B$, $B^*AB$ is also positive.
The above formula can be converted into
\begin{equation}\label{dengs3}
T^{*}\biggl(\sum\limits_{j=1}^{n}(-1)^{j-1}{n \choose j}(T^{*})^{j-1}T^{j-1}\biggl)T=-\sum\limits_{j=1}^{n}(-1)^{j}{n \choose j}(T^{*})^{j}T^{j}\geq0.
\end{equation}
Note that inequality (\ref{budengs1}) is $-\sum\limits_{j=1}^{n}(-1)^{j}{n \choose j}(T^{*})^{j}T^{j}\leq I$ when $j=n$.
Combining with inequation (\ref{dengs3}), we have
$$0\leq-\sum\limits_{j=1}^{n}(-1)^{j}{n \choose j}(T^{*})^{j}T^{j}=\sum\limits_{j=1}^{n}(-1)^{j+1}{n \choose j}(T^{*})^{j}T^{j}\leq1.$$
From Proposition \ref{lemma1}, we obtain that $\sum\limits_{j=1}^{n}(-1)^{j+1}{n \choose j}(T^{*})^{j}T^{j}$ is contractive.
\end{proof}
We will give a sufficient condition for a operator to be reducible by using the $n$-hypercontractivity and the curvature.
\begin{thm}\label{pro2}
Let $T\in \mathcal{L}(\mathcal{H})$ be an $n-$hypercontraction, $n\geq1$. If there exists $\mathcal{M}\in Lat(T)$ such that $T|_{\mathcal{M}}\in B_{1}(\mathbb{D})$ and
$\mathcal{K}_{T|_{\mathcal{M}}}(\omega)=\frac{-n}{(1-|\omega|^{2})^{2}}$ for every $\omega\in\mathbb{D}$, then $T$ is reducible.
\end{thm}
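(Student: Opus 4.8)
The plan is to first pin down $T|_{\mathcal M}$ up to unitary equivalence, then exploit the $n$-hypercontractivity through the positive operator $\frac1K(T^*,T)$, and finally identify $\mathcal M$ as a reducing subspace. First I would record that for the multiplication operator $M_z$ on $\mathcal H_K$ with $K(z,\omega)=(1-z\overline\omega)^{-n}$ one has $\mathcal K_{M_z^*}(\omega)=-\partial_\omega\partial_{\overline\omega}\log(1-|\omega|^2)^{-n}=-n(1-|\omega|^2)^{-2}$. Thus the curvature hypothesis says precisely $\mathcal K_{T|_{\mathcal M}}=\mathcal K_{M_z^*}$, and since both operators lie in $B_1(\mathbb D)$, the index-one case of Theorem \ref{CDT1} gives $T|_{\mathcal M}\sim_u M_z^*$. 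Replacing $T$ by $(U\oplus I)T(U\oplus I)^*$ (which changes neither $n$-hypercontractivity nor reducibility), I may assume $T=\left(\begin{smallmatrix}M_z^* & B\\ 0 & T_2\end{smallmatrix}\right)$ with respect to $\mathcal M\oplus\mathcal M^\perp$, where $B=P_{\mathcal M}T|_{\mathcal M^\perp}$ and $T_2=(T^*|_{\mathcal M^\perp})^*$. The goal then becomes $B=0$, which is equivalent to $\mathcal M\in Lat(T^*)$, i.e. to $\mathcal M$ reducing $T$.

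Next I would extract the defect operator. Since $T$ is an $n$-hypercontraction, $\frac1K(T^*,T)=\sum_{j=0}^n(-1)^j{n\choose j}(T^*)^jT^j\geq0$, and Lemma \ref{lemma4} shows this operator is also dominated by $I$; hence $D:=\frac1K(T^*,T)$ is a positive contraction. A direct block computation gives that the $(1,1)$-entry of $D$ equals $\frac1K((T|_{\mathcal M})^*,T|_{\mathcal M})=\sum_{j=0}^n(-1)^j{n\choose j}M_z^j(M_z^*)^j$, and for the weighted Bergman co-shift this sum collapses to the rank-one projection onto the constants (this is the $n$-hypercontractive substitute for the isometry relation $M_z^*M_z=I$ exploited in Proposition \ref{pro1}). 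In particular $D$ is a contraction one of whose diagonal blocks has norm one, so Lemma \ref{yasuo1} together with Remark \ref{zhu9.9} forces the off-diagonal block $D_{12}=\sum_{j=1}^n(-1)^j{n\choose j}M_z^j\big(\sum_{a+b=j-1}(M_z^*)^aBT_2^b\big)$ to vanish.

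It remains to pass from $D_{12}=0$ to $B=0$. When $n=1$ this is immediate, since $D_{12}=-M_zB$ and $M_z$ is injective, recovering Proposition \ref{pro1}; for $n\geq2$ the block $D_{12}$ is an entangled sum, and this is the main obstacle. The clean resolution I would use is the Model Theorem (Theorem \ref{9.211}, cf. Theorem \ref{11.19}) to realize $T$ as $M_{z,E}^*|_{\mathcal N}$ for an $M_{z,E}^*$-invariant subspace $\mathcal N$ of $\mathcal H_K\otimes E$. Then $\ker(T|_{\mathcal M}-\omega)=\mathbb C\,K_{\overline\omega}\otimes e(\omega)$ for a holomorphic $E$-valued function $e(\omega)$, and $\|K_{\overline\omega}\otimes e(\omega)\|^2=(1-|\omega|^2)^{-n}\|e(\omega)\|^2$; matching the curvature forces $\partial_\omega\partial_{\overline\omega}\log\|e(\omega)\|^2=0$. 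After rescaling the holomorphic frame by a nonvanishing holomorphic factor I may take $\|e(\omega)\|\equiv1$, and a holomorphic vector-valued function of constant norm is constant, so $e(\omega)\equiv e_0$. Consequently $\mathcal M=\bigvee_\omega\ker(T|_{\mathcal M}-\omega)=\mathcal H_K\otimes\mathbb C e_0$, which manifestly reduces $M_{z,E}^*=M_z^*\otimes I_E$ and therefore reduces $T$.

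I expect the genuine difficulty to be concentrated in this last step: establishing that the $E$-component $e(\omega)$ of the eigenbundle is flat (hence constant), so that $\mathcal M$ is a full slice $\mathcal H_K\otimes(\text{line})$ of the dilation, and justifying the applicability of the Model Theorem (in particular the purity condition $\lim_m\|T^mh\|=0$). An alternative, model-free route would be to argue directly that the vanishing of $D_{12}$ forces $B=0$; this appears to require using that $M_z^*$ is the \emph{full} co-shift---so that $M_z$ is injective with $\bigcap_k\operatorname{ran}M_z^k=\{0\}$---rather than merely the block identity, and is the step I would scrutinize most carefully.
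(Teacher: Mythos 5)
Your setup coincides with the paper's: identify $T|_{\mathcal M}$ with $M_z^*$ on $\mathcal H_K$, $K=(1-z\overline\omega)^{-n}$, via Theorem \ref{CDT1}, pass to the block form $\left(\begin{smallmatrix}M_z^*&B\\0&T_2\end{smallmatrix}\right)$, and bring in Lemma \ref{lemma4} together with the fact that $\sum_{j=0}^n(-1)^j\binom{n}{j}M_z^j(M_z^*)^j$ is the rank-one projection $e_0\otimes e_0$. Two problems arise after that. First, a minor one: Remark \ref{zhu9.9} is not the right tool for concluding $D_{12}=0$ --- it concerns norms of blocks of a contraction and (even taken at face value) does not apply to a diagonal block that has norm one without being isometric; the correct way to get $D_{12}=0$ is to use that \emph{both} $D$ and $I-D$ are positive while $D_{11}$ is a projection, so $D$ fixes $(e_0,0)^T$ and $I-D$ fixes each $(e_i,0)^T$, $i\ge1$, whence $D_{21}e_i=0$ for all $i$. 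Second, and decisively: your main route to $B=0$ invokes the Model Theorem to write $T=M_{z,E}^*|_{\mathcal N}$, and this requires the purity condition $\lim_m\|T^mh\|=0$, which is neither a hypothesis of the theorem nor a consequence of the remaining hypotheses. For example $T=M_z^*\oplus V$ with $V$ unitary is an $n$-hypercontraction admitting $\mathcal M=\mathcal H_K\oplus 0$ with exactly the prescribed curvature, yet it is not pure, so no such realization exists and your argument cannot start. You flag this gap yourself, but it is not closed, and closing it (splitting off a non-pure summand first) is not routine; so as written the proof is incomplete.

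The paper never passes through a dilation. It closes the argument with the ``model-free route'' you mention only in passing: it applies the positive contraction $\sum_{j=1}^n(-1)^{j+1}\binom{n}{j}(T^*)^jT^j$ to the vectors $(e_{m+1},0)^T$ and runs an induction on $m$. The point you identify as the obstacle --- that $D_{12}$ is an entangled sum $\sum_j(-1)^j\binom{n}{j}M_z^j\sum_{a+b=j-1}(M_z^*)^aBT_2^b$ --- dissolves under this induction: once $T_{1,2}^*e_i=0$ for $i<m$, only the $l=j-1$ terms survive and the second component of the image of $(e_{m+1},0)^T$ collapses to a single nonzero scalar multiple of $T_{1,2}^*e_m$, which the norm inequality then forces to vanish. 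This uses only $n$-hypercontractivity of $T$ and the weight structure of $M_z^*$, with no purity assumption. If you replace your model-theoretic step by this induction (or, equivalently, extract $B^*e_m=0$ recursively from $D_{21}e_m=0$), your proof becomes complete and essentially matches the paper's.
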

\begin{proof}

If $T|_{\mathcal{M}}\in B_{1}(\mathbb{D})$ and $\mathcal{K}_{T|_{\mathcal{M}}}(\omega)=\frac{-n}{(1-|\omega|^{2})^{2}}$ for $\omega\in\mathbb{D}$,
by Theorem \ref{CDT1} given by M. J. Cowen and R. G. Douglas, the curvature is the completely unitary invariant of Cowen-Douglas operator with index one and by the main result of \cite{Misra}, we know that
$T|_{\mathcal{M}}$ is unitarily
equivalent to $(M_{z}^{*}, \mathcal{H}_{K}, K)$, where $\mathcal{H}_{K}$ is the analytic functional Hilbert space with reproducing kernel $K(z,w)=\frac{1}{(1-z\overline{w})^n}$ over  $\mathbb{D}\times\mathbb{D}$.
It is well known that the multiplication operator on Hardy space is an isometric operator. So by Proposition \ref{pro1}, we obtain that this proposition holds when $n=1$.
We will assume that $n>1$.

There exists a unitary operator $U:\mathcal{M}\rightarrow\mathcal{H}_{K}$ such that $M_{z}^{*}=UT|_{\mathcal{M}}U^{*}$. Hence, we have
$$\begin{array}{lll}
(U\oplus I)T(U\oplus I)^{*}
&=&\left ( \begin{matrix}M_{z}^{*}\ & UP|_{\mathcal{M}}T|_{\mathcal{M}^{\perp}}\\
0 & P|_{\mathcal{M}^{\perp}}T|_{\mathcal{M}^{\perp}}\\
\end{matrix}\right )
:=\left ( \begin{matrix}M_{z}^{*} & T_{12}\\
0 & T_{2}\\
\end{matrix}\right )
:=\widetilde{T}.
\end{array}$$
Since $U\oplus I$ is a unitary operator and the
unitary transformation does not change the reducibility and $n-$hypercontractivity of operators, we only need to verify that the $n-$hypercontractive operator $\widetilde{T}$ is reducible.
Space $\mathcal{H}_K$ has an orthonormal basis of the form $\{e_{i}(z)=\frac{(n)_i}{i!}z^i\}_{n=0}^{\infty}$, where $(n)_i$ is the Pochhammer symbol given by $\frac{\Gamma(n+i)}{\Gamma(n)}$.
Thus, $M_{z}^{*}$ can be regarded as a backward weighted shift operator respect to the orthonormal basis $\{e_{i}\}_{n=0}^{\infty}$ of $\mathcal{H}_K$
with weight sequence $\{\sqrt{\frac{i+1}{n+i}}\}_{i=0}^{\infty}$ up to unitary equivalence.
A routine calculation shows that $\sum\limits_{j=0}^{n}(-1)^{j}{n \choose j}M_z^{j}M_z^{*j}$ is a diagonal operator respect to $\{e_{i}\}_{n=0}^{\infty}$ and the first entry on the diagonal is 1 and the rest are $c_j$, $j\geq1$, where
$$c_j=\begin{cases}
1+\sum\limits_{i=1}^j(-1)^i{n \choose i}\frac{(j+1-i)_i}{(n+j-i)_i},\ 1\leq j\leq n;\\
1+\sum\limits_{i=1}^n(-1)^i{n \choose i}\frac{(j+1-i)_i}{(n+j-i)_i},\ j\geq n.
\end{cases}$$
Since $(1-x)^{n}=\sum\limits_{i=0}^{n}{n \choose i}(-x)^{i}$ and $\frac{1}{(1-x)^{n}}=\sum\limits_{i=0}^{\infty}{n-1+i \choose i}x^{i}$ for $|x|<1$, we have that
$$\begin{array}{lll}
1&=&(1-x)^{n}\frac{1}{(1-x)^{n}}\\
&=&\Big[\sum\limits_{j=0}^{n}{n \choose j}(-x)^{j}\Big]\Big[\sum\limits_{i=0}^{\infty}{n-1+i \choose i}x^{i}\Big]\\
&=&\sum\limits_{j=0}^{n}\Big[\sum\limits_{i=0}^{j}(-1)^{i}{n \choose i}{n-i+j-1 \choose j-i}\Big]x^{j}+\sum\limits_{j=n+1}^{\infty}\Big[\sum\limits_{i=0}^{n}(-1)^{i}{n \choose i}{n-i+j-1 \choose j-i}\Big]x^{j}.
\end{array}$$
It follows that $\sum\limits_{i=0}^{j}(-1)^{i}{n \choose i}{n-i+j-1 \choose j-i}=0$, $0\leq j\leq n$
and $\sum\limits_{i=0}^{n}(-1)^{i}{n \choose i}{n-i+j-1 \choose j-i}=0$, $j>n$. Further, we obtain that $c_j=0$ for any $j\geq1$ and $\sum\limits_{j=0}^{n}(-1)^{j}{n \choose j}M_z^{j}M_z^{*j}=I|_{span\{e_0\}}$, where $I$ is identity of $(U\oplus I)\mathcal{H}$.
Thus, we have
\begin{equation}\label{dengs5}
-\sum\limits_{j=1}^{n}(-1)^{j}{n \choose j}M_{z}^{j}(M_{z}^{*})^{j}e_{i}=\sum\limits_{j=1}^{n}(-1)^{j+1}{n \choose j}M_{z}^{j}(M_{z}^{*})^{j}e_{i}=e_{i},\ i\geq1.
\end{equation}
Note that
$$\begin{array}{lll}
& &\sum\limits_{j=1}^{n}(-1)^{j+1}{n \choose j}(\widetilde{T}^{*})^{j}\widetilde{T}^{j}\\
&=&\left(\begin{smallmatrix}\sum\limits_{j=1}^{n}(-1)^{j+1}{n \choose j}M_{z}^{j}(M_{z}^{*})^{j} & {n \choose 1}M_zT_{12}+\sum\limits_{j=2}^{n}(-1)^{j+1}{n \choose j}M_{z}^{j}\Big(\sum\limits_{l=0}^{j-1}(M_{z}^{*})^{l}T_{12}T_{2}^{j-l-1}\Big)\\
{n \choose 1}T_{12}^*M_{z}^{*}+\sum\limits_{j=2}^{n}(-1)^{j+1}{n \choose j}\Big(\sum\limits_{l=0}^{j-1}(T_{2}^{*})^{l}T_{12}^{*}M_{z}^{j-l-1}\Big)(M_{z}^{*})^{j}
 &\triangle\\
\end{smallmatrix}\right),
\end{array}$$
where $\triangle=\sum\limits_{j=1}^{n}(-1)^{j+1}{n \choose j}\biggl[(T_{2}^{*})^{j}T_{2}^{j}+\Big(\sum\limits_{h=0}^{j-1}(T_{2}^{*})^{h}T_{1,2}^{*}M_{z}^{j-h-1}\Big)\Big(\sum\limits_{h=0}^{j-1}(M_{z}^{*})^{h}T_{1,2}T_{2}^{j-h-1}\Big)\biggl]$.
Since equation (\ref{dengs5}), we have
$$\sum\limits_{j=1}^{n}(-1)^{j+1}{n \choose j}(\widetilde{T}^{*})^{j}\widetilde{T}^{j}\left ( \begin{matrix}e_{1}\\
0\\
\end{matrix}\right )=\left ( \begin{matrix}e_{1}\\
\sqrt{n}T_{1,2}^{*}e_{0}\\
\end{matrix}\right ).$$
By Lemma \ref{lemma4}, we know operator $\sum\limits_{j=1}^{n}(-1)^{j+1}{n \choose j}(\widetilde{T}^{*})^{j}\widetilde{T}^{j}$ is contractive. Then $T_{1,2}^{*}e_{0}=0$. 
To complete the proof by induction, suppose that $T_{1,2}^{*}e_{i}=0$ for $1\leq i\leq m-2$ for $m\geq2$. We will prove that $T_{1,2}^{*}e_{m-1}=0$. By the induction hypothesis, we have
$$\begin{array}{lll}
\sum\limits_{j=2}^{n}(-1)^{j+1}{n \choose j}\Big(\sum\limits_{l=0}^{j-1}(T_{2}^{*})^{l}T_{12}^{*}M_{z}^{j-l-1}\Big)(M_{z}^{*})^{j}e_m=\begin{cases}
\sum\limits_{j=2}^{m}(-1)^{j+1}{n \choose j}\prod\limits_{i=2}^jw_{m-i}^2w_{m-1}T_{12}^*e_{m-1},\ m<n;\\
\sum\limits_{j=2}^{n}(-1)^{j+1}{n \choose j}\prod\limits_{i=2}^jw_{m-i}^2w_{m-1}T_{12}^*e_{m-1},\ m\geq n,
\end{cases}
\end{array}$$
where $w_k=\sqrt{\frac{k+1}{n+k}}$ for $k\geq0$. It follows from Lemma \ref{lemma4} again that
$$\begin{cases}
\Big[{n \choose 1}+\sum\limits_{j=2}^{m}(-1)^{j+1}{n \choose j}\prod\limits_{i=2}^jw_{m-i}^2\Big]w_{m-1}T_{12}^*e_{m-1}=0,\ m<n;\\
\Big[{n \choose 1}+\sum\limits_{j=2}^{n}(-1)^{j+1}{n \choose j}\prod\limits_{i=2}^jw_{m-i}^2\Big]w_{m-1}T_{12}^*e_{m-1}=0,\ m\geq n.
\end{cases}$$
Since the coefficients of $T_{12}^*e_{m-1}$ in both cases are $\frac{1}{w_{m-1}}\neq0$. That means $T_{12}^*e_{m-1}=0$. This completes the induction step.
Then we obtain $T_{12}^{*}e_{i}=0$ for orthonormal basis $\{e_{i}\}_{i=0}^{\infty}$. Hence, $T_{12}^{*}=0$ and $\widetilde{T}$ is reducible.
\end{proof}
\begin{cor}
Let $T\in \mathcal{L}(\mathcal{H})$ is irreducible and $\|T\|\leq1$. If there exist $\mathcal{M}\in Lat(T)$ and a positive integer $k$ such that $T|_{\mathcal{M}}\in B_{1}(\mathbb{D})$ and
$\mathcal{K}_{T|_{\mathcal{M}}}(\omega)=\frac{-k}{(1-|\omega|^{2})^{2}}$ for all $\omega\in\mathbb{D}$, then $T$ cannot be subnormal.
\end{cor}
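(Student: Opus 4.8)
The plan is to argue by contradiction and funnel everything into Theorem~\ref{pro2}. Suppose, toward a contradiction, that $T$ is subnormal. Since $\|T\|\leq 1$, $T$ is a subnormal contraction, and the decisive fact I would invoke is that such an operator is automatically $n$-hypercontractive for \emph{every} $n\geq 1$; this is the positivity half of Agler's theory relating hypercontractions and subnormality (cf.\ \cite{Agler1}). In particular $T$ is a $k$-hypercontraction, where $k$ is the positive integer furnished by the curvature hypothesis $\mathcal{K}_{T|_{\mathcal{M}}}(\omega)=\frac{-k}{(1-|\omega|^{2})^{2}}$.

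I would make the passage from subnormality to $k$-hypercontractivity self-contained as follows. Let $N$ be the minimal normal extension of $T$ on a larger space $\mathcal{K}\supseteq\mathcal{H}$, so that $N\mathcal{H}\subseteq\mathcal{H}$ and $T=N|_{\mathcal{H}}$. Then $T^{j}=N^{j}|_{\mathcal{H}}$ and $(T^{*})^{j}T^{j}=P_{\mathcal{H}}N^{*j}N^{j}|_{\mathcal{H}}$ for every $j$, using invariance of $\mathcal{H}$ under $N$. Because $N$ is normal, $N^{*j}N^{j}=(N^{*}N)^{j}$, whence
$$\sum_{j=0}^{k}(-1)^{j}{k \choose j}(T^{*})^{j}T^{j}=P_{\mathcal{H}}\,(I-N^{*}N)^{k}\big|_{\mathcal{H}}.$$
Since $\sigma(N)\subseteq\sigma(T)\subseteq\overline{\mathbb{D}}$ and $N$ is normal, $\|N\|\leq 1$, so $I-N^{*}N\geq 0$ and its $k$-th power is positive; a compression of a positive operator is positive, which yields the required inequality for all $1\leq k\leq n$. (If one prefers, this step can simply be cited from Agler's work rather than verified.)

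With $T$ now known to be a $k$-hypercontraction, the hypotheses of Theorem~\ref{pro2} hold verbatim with $n=k$: we have $\mathcal{M}\in Lat(T)$, $T|_{\mathcal{M}}\in B_{1}(\mathbb{D})$, and $\mathcal{K}_{T|_{\mathcal{M}}}(\omega)=\frac{-k}{(1-|\omega|^{2})^{2}}$. Theorem~\ref{pro2} then forces $T$ to be reducible, contradicting the standing hypothesis that $T$ is irreducible. Hence $T$ cannot be subnormal. The only substantive ingredient is the subnormal-to-hypercontractive passage, and that is where I expect the main (indeed the sole) obstacle to lie; once it is in place, the conclusion is an immediate specialization of Theorem~\ref{pro2}.
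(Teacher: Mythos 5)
Your proposal is correct and follows essentially the same route as the paper: assume subnormality, use Agler's theorem that a subnormal contraction is $n$-hypercontractive for every $n$, and then invoke Theorem~\ref{pro2} with $n=k$ to contradict irreducibility. The only difference is that you unpack the Agler step via the minimal normal extension (which the paper simply cites), and that verification is sound.
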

\begin{proof}
Suppose that the operator $T$ is subnormal. We know from \cite{Agler2} that an operator is $n-$hypercontractive for all positive integer $n$ if and only if it is a subnormal contraction.
Based on hypothesis $\|T\|\leq1$, we have that $T$ is $n-$hypercontractive for all positive integer $n$.
Since $T|_{\mathcal{M}}\in B_{1}(\mathbb{D})$ and $\mathcal{K}_{T|_{\mathcal{M}}}(\omega)=\frac{-k}{(1-|\omega|^{2})^{2}}$ for all $\omega\in\mathbb{D}$, by Theorem \ref{pro2},
we obtain $T$ is reducible. It's a contradiction. Hence, $T$ is not subnormal.
\end{proof}
\begin{cor}\label{9.6}
Let $T\in \mathcal{L}(\mathcal{H})$ be an $n-$hypercontraction. If there exists $\mathcal{M}\in Lat(T)$ such that $T|_{\mathcal{M}}\in B_{1}(\mathbb{D})$ and
\begin{equation}\label{dengs6}
\sum\limits_{i=0}^{n}(-1)^{i}{n \choose i}(T|_{\mathcal{M}})^{*i}(T|_{\mathcal{M}})^{i}=e\otimes e
\end{equation}
for some $e\in \mathcal{M}$ and $\|e\|=1$, then $T$ is reducible.
\end{cor}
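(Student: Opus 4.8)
The plan is to deduce this corollary from Theorem~\ref{pro2}: I will show that hypothesis (\ref{dengs6}) forces the curvature of the line bundle $E_{T|_{\mathcal{M}}}$ to equal $\frac{-n}{(1-|\omega|^{2})^{2}}$ on $\mathbb{D}$, after which Theorem~\ref{pro2} applies directly and yields that $T$ is reducible.

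First I would check that $S:=T|_{\mathcal{M}}$ is itself an $n$-hypercontraction. Since $\mathcal{M}\in Lat(T)$, with respect to $\mathcal{M}\oplus\mathcal{M}^{\perp}$ the operator $T$ is upper triangular with $(1,1)$-entry $T|_{\mathcal{M}}$; hence Proposition~\ref{9.132} yields that $S$ is $n$-hypercontractive. Because $S\in B_{1}(\mathbb{D})$, I may pick a non-vanishing holomorphic cross-section $t(\omega)$ of $E_{S}$ and apply the metric identity (\ref{9.301}) to $S$:
\[
\|t(\omega)\|^{2}=\frac{1}{(1-|\omega|^{2})^{n}}\Bigl\|\Bigl(\sum_{j=0}^{n}(-1)^{j}{n \choose j}(S^{*})^{j}S^{j}\Bigr)^{1/2}t(\omega)\Bigr\|^{2}.
\]
Now hypothesis (\ref{dengs6}) says the operator under the square root is exactly the rank-one projection $e\otimes e$ (recall $|e|=1$), which is its own square root. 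Thus $\|(e\otimes e)^{1/2}t(\omega)\|^{2}=|\langle t(\omega),e\rangle|^{2}$, and writing $\phi(\omega):=\langle t(\omega),e\rangle$ I arrive at $\|t(\omega)\|^{2}=|\phi(\omega)|^{2}(1-|\omega|^{2})^{-n}$. Since $t$ is holomorphic and the inner product is linear in its first variable, $\phi$ is holomorphic on $\mathbb{D}$, and it is zero-free because $\|t(\omega)\|^{2}>0$ throughout.

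Next I would compute the curvature of the line bundle via $\mathcal{K}_{S}(\omega)=-\frac{\partial^{2}}{\partial\omega\partial\bar{\omega}}\log\|t(\omega)\|^{2}$. Splitting $\log\|t\|^{2}=\log|\phi|^{2}-n\log(1-|\omega|^{2})$, the term $\log|\phi|^{2}=\log\phi+\log\overline{\phi}$ is harmonic (as $\phi$ is holomorphic and nowhere zero), so it contributes nothing, while the standard computation $\frac{\partial^{2}}{\partial\omega\partial\bar{\omega}}\log(1-|\omega|^{2})=-(1-|\omega|^{2})^{-2}$ handles the rest. This gives $\mathcal{K}_{T|_{\mathcal{M}}}(\omega)=\frac{-n}{(1-|\omega|^{2})^{2}}$, which is precisely the hypothesis of Theorem~\ref{pro2}; invoking that theorem completes the proof.

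The step I expect to require the most care is the legitimacy of the metric formula (\ref{9.301}) for the restriction $S=T|_{\mathcal{M}}$: that identity was obtained from the Model Theorem~\ref{9.211} through the tensor decomposition (\ref{9.30}) of the eigenbundle, so I must ensure $S$ genuinely fits that model---equivalently, that the eigenbundle of $S$ carries the tensor structure $\mathrm{span}\{K_{\bar{\omega}}\}\otimes\mathcal{N}(\omega)$, which amounts to verifying the asymptotic condition $\lim_{m}\|S^{m}h\|=0$ of Theorem~\ref{9.211}. Granting this, the remaining observations (that $e\otimes e$ is a projection equal to its own square root, and that $\phi$ never vanishes) are routine.
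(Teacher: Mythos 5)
Your proposal is correct and arrives at the same destination as the paper --- both arguments reduce the corollary to Theorem \ref{pro2} by showing $\mathcal{K}_{T|_{\mathcal{M}}}(\omega)=\frac{-n}{(1-|\omega|^{2})^{2}}$ --- but the curvature computation travels a different road. The paper works directly with the eigenvalue equation: it conjugates by a unitary sending $e$ to $e_{0}$, picks a section normalized so that $\langle e(\omega),e_{0}\rangle=1$ on an open set $\Omega_{0}$ where the zeroth coordinate function is nonvanishing, applies the rank-one identity (\ref{dengs6}) to that section, and pairs back with $e(\omega)$ to get $\|e(\omega)\|^{2}=(1-|\omega|^{2})^{-n}$ on $\Omega_{0}$. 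You instead invoke the model-theoretic metric formula (\ref{9.301}), observe that the defect operator is the projection $e\otimes e$ (its own positive square root since $|e|=1$), and obtain $\|t(\omega)\|^{2}=|\langle t(\omega),e\rangle|^{2}(1-|\omega|^{2})^{-n}$ with $\langle t(\omega),e\rangle$ holomorphic and zero-free, so the harmonic factor drops out of the curvature. Your route has the minor advantage of yielding the identity on all of $\mathbb{D}$ at once (the paper only establishes it on $\Omega_{0}$ and implicitly relies on real-analytic continuation), at the cost of importing the model theorem.

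The one step you flagged but did not close is the applicability of (\ref{9.301}) to $S:=T|_{\mathcal{M}}$, namely the hypothesis $\lim_{m}\|S^{m}h\|=0$ of Theorem \ref{9.211}. This does need to be said, but it is automatic in your setting: $S$ is $n$-hypercontractive by Proposition \ref{9.132}, so the $k=1$ inequality gives $I-S^{*}S\geq 0$ and the sequence $\|S^{m}h\|$ is nonincreasing; for $h\in\ker(S-\omega)$ with $\omega\in\mathbb{D}$ one has $\|S^{m}h\|=|\omega|^{m}\|h\|\to 0$, hence the limit vanishes on the dense subspace $\bigvee_{\omega\in\mathbb{D}}\ker(S-\omega)=\mathcal{M}$ (this density is part of $S\in B_{1}(\mathbb{D})$), and the uniform bound $\|S^{m}\|\leq 1$ extends the conclusion to every $h\in\mathcal{M}$. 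With that sentence supplied, your argument is complete.
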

\begin{proof}
Since $e\in \mathcal{M}$ and $\|e\|=1$, $e$ can be extended to an orthonormal basis $\{e_{n}\}_{n=0}^{\infty}$ of $\mathcal{M}$, where $e_0=e$.
We first prove the fact that for operator $T|_{\mathcal{M}}\in B_{1}(\mathbb{D})$, there exists an open set $\Omega_{0}$ and a non-vanishing holomorphic section $t$ of vector bundle $E_{T|_{\mathcal{M}}}$, such that $\langle t(w), e_{0}\rangle=1$ for any $w\in\Omega_0$.
For a fixed but arbitrary non-zero section $\widetilde{t}$ of $E_{T|_{\mathcal{M}}}$, there exist holomorphic functions $\{\phi_{i}\}_{n=0}^{\infty}$ on $\mathbb{D}$ such that $\widetilde{t}(\omega)=\sum\limits_{i=0}^{\infty}\phi_{i}(\omega)e_{i}$ for each $w\in\mathbb{D}$.
These functions do not have a common zero and each of them has at most finitely many of zeros.
Then we can find a connected open subset $\Omega_{0}$ of $\mathbb{D}$ such that $\phi_{0}(\omega)\neq 0$ for all $\omega\in \Omega_{0}$.
Let
$$t(\omega):=\frac{\widetilde{t}(\omega)}{\phi_{0}(\omega)}=e_0+\sum\limits_{i=1}^{\infty}\frac{\phi_{i}(\omega)}{\phi_{0}(\omega)}e_{i},\ w\in\Omega_0.$$
We also have $t(\omega)\in \ker(T|_{\mathcal{M}}-\omega)$ and $\langle t(\omega), e_{0}\rangle=1$ for all $\omega\in \Omega_{0}$.

Note that $(e_{0}\otimes e_{0})t(\omega)=\langle t(\omega), e_{0}\rangle e_{0}=e_{0}$ and
$$\begin{array}{lll}
\Big\langle\sum\limits_{i=0}^{n}(-1)^{i}{n \choose i}(T|_{\mathcal{M}})^{*i}(T|_{\mathcal{M}})^{i}t(w),t(w)\Big\rangle
&=&\sum\limits_{i=0}^{n}(-1)^{i}{n \choose i}\Big\langle w^{i}(T|_{\mathcal{M}})^{*i}t(w),t(w)\Big\rangle=(1-|\omega|^{2})^{n}\|t(w)\|^2
\end{array}$$
for $w\in\Omega_0$. From equation (\ref{dengs6}), we obtain that $\|t(w)\|^2=\frac{1}{(1-|\omega|^{2})^{n}}$, $w\in\Omega_0$. It follows that $\mathcal{K}_{T|_{\mathcal{M}}}(\omega)=-\frac{n}{(1-|\omega|^{2})^{2}}$.
Since $T\in \mathcal{L}(\mathcal{H})$ is $n-$hypercontractive, by using Theorem \ref{pro2}, we have $T$ is reducible.
\end{proof}

\end{document}